\documentclass[reqno,a4paper,11pt]{amsart}
\numberwithin{equation}{section}
\usepackage{lineno}
\usepackage{amssymb} 
\usepackage{amsmath} 
\usepackage{amsthm} 
\usepackage{ascmac} 
\usepackage{bm} 
\usepackage{color} 
\usepackage[OT2,T1]{fontenc} 
\usepackage[margin=1in]{geometry} 
\newcommand{\notmid}{\mathrel{\ooalign{$\mkern-5mu\not$\crcr$|$}}}
\usepackage{indentfirst} 
\usepackage{stmaryrd}
\usepackage{mathrsfs} 
\usepackage{multirow} 
\usepackage{graphicx} 
\usepackage{tikz-cd}
\usepackage{enumitem}
\usepackage{tabularray}
\usetikzlibrary{decorations.markings}

\usetikzlibrary{shapes.geometric}
\tikzset{every loop/.style={min distance=10mm,looseness=10}}
\usepackage{mathtools} 
\allowdisplaybreaks[4]
\mathtoolsset{showonlyrefs=true}

\theoremstyle{plain}
\newtheorem{thm}{Theorem} 
\newtheorem{prop}[thm]{Proposition}
\newtheorem{lem}[thm]{Lemma}
\newtheorem{cor}[thm]{Corollary}
\numberwithin{thm}{section} 
\theoremstyle{definition}
\newtheorem{defi}[thm]{Definition}
\newtheorem{conj}[thm]{Conjecture}
\newtheorem{ex}[thm]{Example}
\newtheorem{rem}[thm]{Remark}

\let\epsilon\relax
\makeatletter
\newcommand{\MyMathOperators}[1]{\@for\op:=#1\do{
	\expandafter\edef\csname\op\endcsname{\noexpand\mathop{\noexpand\mathrm{\op}}\nolimits}
}}
\makeatother
\MyMathOperators{%
	Ker,%
	Coker,%
	Hom,%
	End,%
	Aut,%
	id,%
	Gal,%
	ch,%
	ord,%
	Spec,%
	Specm,%
	Frac,%
	rank,%
	corank,%
	sgn,%
	Reg,%
	Sel,%
	Cl,%
	lcm,%
	gcd,%
	Ann,%
	alg,%
}

\renewcommand{\bar}[1]{\overline{#1}}

\newcommand{\blue}[1]{\textcolor{blue}{#1}} 

\DeclareSymbolFont{cyrletters}{OT2}{wncyr}{m}{n}
\DeclareMathSymbol{\Sha}{\mathalpha}{cyrletters}{"58}

\usepackage[OT2,T1]{fontenc}
\DeclareSymbolFont{cyrletters}{OT2}{wncyr}{m}{n}
\DeclareMathSymbol{\Sha}{\mathalpha}{cyrletters}{"58}
\newcommand{\Z}{\mathbb{Z}}
\newcommand{\Q}{\mathbb{Q}}

\newcommand{\F}{\mathbb{F}}

\usepackage[colorlinks=true,allcolors=black]{hyperref}

\title{Iwasawa Theory for K3 surfaces over finite fields}
\author{Rikuto Ito and Sohei Tateno}
\address{Rikuto Ito  \newline GRADUATE SCHOOL OF MATHEMATICS, NAGOYA UNIVERSITY, FURO-CHO, CHIKUSA-KU, NAGOYA, 464-8602, JAPAN}
\email{ito.rikuto.g7@s.mail.nagoya-u.ac.jp}
\address{Sohei Tateno \newline GRADUATE SCHOOL OF NATURAL SCIENCE \& TECHNOLOGY, KANAZAWA UNIVERSITY, KAKUMA-MACHI, KANAZAWA, 920-1192, JAPAN}
\email{inu.kaimashita@gmail.com}
\date{}
\keywords{Iwasawa theory, K3 surfaces, Brauer groups}
\subjclass[2020]{Primary 11R23; 11G35; 11G40; 14J28; 14J32}
\begin{document}

\begin{abstract}
In this paper, we initiate  Iwasawa theory for K3 surfaces over finite fields. First, using the Artin--Tate conjecture, which is known to hold for  K3 surfaces, we prove an analogue of Mazur’s control theorem for elliptic curves over number fields. Second, we prove an analogue of Iwasawa’s class number formula for Brauer groups in two different ways. We also give explicit examples in the case of Kummer surfaces. Finally, we establish an analogue of the Iwasawa main conjecture for Brauer groups.
\end{abstract}
\maketitle

\tableofcontents
\section{Introduction}
\subsection{Iwasawa Theory for elliptic curves over number fields}
Let $p$ be a prime number. 
\begin{defi}
Let $M$ be a $\mathbb{Z}$-module. For each $n\geq 0$, let $M[p^{n}]\subset M$ be the submodule killed by $p^{n}$. The union $M[p^{\infty}]:=\bigcup_{n\geq 0} M[p^{n}]$ is called the \emph{$p$-primary part} of $M$. In particular, if $M$ is finite, then $M[p^{\infty}]$ coincides with the $p$-Sylow subgroup of $M$. 
\end{defi}
In 1959, Iwasawa proved the following celebrated formula, now known as Iwasawa's class number formula. \begin{thm}[Iwasawa \cite{iwasawa}]\label{oriiwasawa} Let $K$ be an arbitrary number field and let $K_{\infty}/K$ be a $\mathbb{Z}_{p}$-extension, i.e., $K_{\infty}/K$ is a Galois extension and $\Gamma:={\rm Gal}(K_{\infty}/K)\cong\mathbb{Z}_{p}$ as topological groups. For each integer $n\geq0$, let $K_{n}$ be  the intermediate field corresponding to the subgroup $p^{n}\mathbb{Z}_{p}$. Then there exist unique integers $\mu,  \lambda\in\mathbb{Z}_{\geq0}$ and $\nu\in\mathbb{Z}$ such that, for all sufficiently large $n\geq0$, we have 
\begin{equation}
    \#{\rm Cl}(K_{n})[p^{\infty}]=p^{\mu p^{n}+\lambda n+\nu},\end{equation}
    where ${\rm Cl}(K_{n})$ is the ideal class group of $K_{n}$. 
\end{thm}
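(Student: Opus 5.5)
The plan is to follow Iwasawa's original strategy: package the $p$-parts of the class groups into a single module over $\Lambda:=\mathbb{Z}_{p}[[\Gamma]]\cong\mathbb{Z}_{p}[[T]]$ with $T=\gamma-1$ for a topological generator $\gamma$ of $\Gamma$. Then recover each $\#{\rm Cl}(K_{n})[p^{\infty}]$ from that module by a descent argument.

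\textbf{Step 1 (reduction).} Only finitely many primes of $K$ ramify in $K_{\infty}/K$, and all of them lie above $p$. Each ramified prime has an open inertia group in $\Gamma$. So after replacing $K$ by some $K_{e}$ we may assume every ramified prime is totally ramified from $K_{0}=K$ on; this changes $n$ only by a shift and is harmless for large $n$. Let $L_{n}$ be the maximal unramified abelian $p$-extension of $K_{n}$ and $L=\bigcup L_{n}$. Set $X={\rm Gal}(L/K_{\infty})$, on which $\Gamma$ acts by conjugation. By class field theory $X_{n}:={\rm Gal}(L_{n}/K_{n})\cong{\rm Cl}(K_{n})[p^{\infty}]$, and $X=\varprojlim X_{n}$ is a compact $\Lambda$-module.

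\textbf{Step 2 (descent).} In the simplest case there is a single ramified prime with inertia group $I\subset{\rm Gal}(L/K)$. Then $G={\rm Gal}(L/K)=I\ltimes X$ and $I\cong\Gamma$, and a commutator computation gives $X_{0}\cong X/TX$. More generally, set $\omega_{n}=(1+T)^{p^{n}}-1$ and $\nu_{n}=\omega_{n}/T$. Then
\[
X_{n}\cong X/\nu_{n}Y_{0},
\]
where $Y_{0}$ is a fixed submodule of $X$ with $X/Y_{0}$ finite. In the case of $s$ ramified primes, $Y_{0}$ is generated by $TX$ and the elements $a_{i}=\sigma_{i}\sigma_{1}^{-1}$ comparing the inertia generators. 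I expect this step to be the main obstacle, because of the group-theoretic bookkeeping with several inertia groups. I would first work out the one-prime case carefully and then adjoin the $a_{i}$.

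\textbf{Step 3 (structure).} Since $X_{0}=X/Y_{0}\cdot(\text{finite})$ is finite, the topological Nakayama lemma shows $X$ is finitely generated over $\Lambda$. Because $X/\nu_{n}Y_{0}$ is finite for every $n$, $X$ is in fact torsion. The structure theorem then gives a pseudo-isomorphism
\[
X\sim\bigoplus_{i}\Lambda/(p^{m_{i}})\oplus\bigoplus_{j}\Lambda/(f_{j}(T)^{l_{j}})
\]
with distinguished polynomials $f_{j}$. Put $\mu=\sum m_{i}$ and $\lambda=\sum l_{j}\deg f_{j}$.

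\textbf{Step 4 (counting).} First compute the orders for an elementary module $E$:
\begin{itemize}
\item $\#\Lambda/(p^{m},\nu_{n})=p^{m(p^{n}-1)}$.
\item For a distinguished $g$ of degree $d$ and large $n$, $\#\Lambda/(g,\nu_{n})=p^{dn+c}$, using $\nu_{n+1}=\nu_{n}\cdot(\text{unit}\cdot p+\text{multiple of }T^{p^n(p-1)})$ and $T^{k}\equiv p\cdot(\ldots)$ modulo $g$ for $k\ge d$.
\end{itemize}
This requires $E/\nu_{n}E$ to be finite. If instead $g$ divides some $\nu_{n}$, finiteness of $X_{n}$ rules that out.

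Next, pass from $E$ to $Y_{0}$ and then to $X$. A pseudo-isomorphism has finite kernel and cokernel, so a snake-lemma comparison shows $\#Y_{0}/\nu_{n}Y_{0}$ and $\#E/\nu_{n}E$ differ by a constant factor for large $n$. Since $\#X/\nu_{n}Y_{0}=\#X/Y_{0}\cdot\#Y_{0}/\nu_{n}Y_{0}$, this yields
\[
\#X_{n}=p^{\mu p^{n}+\lambda n+\nu}
\]
for some $\nu\in\mathbb{Z}$ and all large $n$.

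\textbf{Step 5 (uniqueness).} Uniqueness of $\mu,\lambda,\nu$ is clear, since the exponent $\mu p^{n}+\lambda n+\nu$ determines all three from its values at large $n$.
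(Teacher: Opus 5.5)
The paper gives no proof of this statement; it is quoted from \cite{iwasawa}. Your outline is the standard proof (Iwasawa's, as presented in \cite[Chapter 13]{Washington}). It is also the template the paper follows for K3 surfaces. There the tower is unramified, so Lemma \ref{geoquo} identifies the $n$-th layer with $M/(\gamma^{p^n}-1)M$ directly, and nothing like your $Y_0$, $\nu_n$ and inertia elements $a_i$ is needed. Your count on elementary modules is what Theorem \ref{moduiwasawa} (analytically, Theorem \ref{fef}) packages. Most of your sketch is sound. In particular the snake-lemma comparison in Step 4 does give a constant ratio for large $n$: an elementary module $E$ with $E/\nu_nE$ finite has $E[\nu_n]=0$, a finite cokernel $B$ has $\#B[\nu_n]=\#B/\nu_nB$, and $\nu_n\in(p,T)^n$ eventually kills the finite kernel.

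\textbf{The reduction in Step 1 is not harmless as stated.} After replacing $K$ by $K_e$, your Iwasawa algebra is $\Lambda'=\mathbb{Z}_p[[\Gamma^{p^e}]]$ with variable $T'=(1+T)^{p^e}-1$. Steps 2--4 then give $\#\mathrm{Cl}(K_n)[p^\infty]=p^{\mu'p^{n-e}+\lambda'(n-e)+\nu'}$ for large $n$, where $\mu',\lambda'$ are the invariants over $\Lambda'$. This has the required form with $\mu\in\mathbb{Z}_{\ge 0}$ only if $p^e\mid\mu'$, and your count does not force this, since the exponent is an integer for $n\ge e$ in any case.

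The fix is short. Since $L/K$ is Galois, $X$ is a module over the original $\Lambda$, which is free of rank $p^e$ over $\Lambda'$. Thus $\Lambda/(p^m)\cong(\Lambda'/(p^m))^{p^e}$, while $\Lambda/(g)$ stays $\mathbb{Z}_p$-free of rank $\deg g$. Hence $\mu'=p^e\mu_\Lambda(X)$ and $\lambda'=\lambda_\Lambda(X)$, and your $Y_0$, being of finite index in $X$, has the same invariants.

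Alternatively, avoid the shift as Washington does. Keep $K$ and use $X_n\cong X/\nu_{n,e}Y_e$ for $n\ge e$, where $\nu_{n,e}=\omega_n/\omega_e$ and $Y_e=\mathrm{Gal}(L/L_eK_\infty)$ is a $\Lambda$-submodule. Then $\#\Lambda/(p^m,\nu_{n,e})=p^{m(p^n-p^e)}$ gives the integral $\mu$ directly.

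\textbf{A smaller imprecision in Step 3.} The hypothesis of the topological Nakayama lemma is finiteness of $X/(p,T)X$, and this does not follow from finiteness of $X/Y_0$ alone. Either of the following closes it:
\begin{enumerate}
\item Note that $Y_0/TX$ is generated over $\mathbb{Z}_p$ by the images of the $s-1$ elements $a_i$. Then $X/TX$ is a finitely generated $\mathbb{Z}_p$-module, so $X/(p,T)X$ is finite.
\item Apply Nakayama to $Y_0$ instead. Since $Y_0/\nu_1Y_0\subset X_1$ is finite and $\nu_1\in(p,T)$, $Y_0/(p,T)Y_0$ is finite, so $Y_0$ and hence $X$ are finitely generated.
\end{enumerate}
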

This is regarded as the first formula describing the regularity of class numbers in certain towers of number fields. The integers $\mu$, $\lambda$, and $\nu$ are called \emph{the Iwasawa invariants of $K_\infty/K$}. The $\mu$ and $\lambda$-invariants are determined by the corresponding $\mu$ and $\lambda$-invariants of a finitely generated torsion $\Lambda$-module, where $\Lambda$ is the complete group ring $\mathbb{Z}_p[[\Gamma]]$. In the setting of Theorem \ref{oriiwasawa}, $\varprojlim_n {\rm Cl}(K_n)[p^\infty]$ is the corresponding $\Lambda$-module.

Analogues of Theorem \ref{oriiwasawa} are also known in the setting of elliptic curves over number fields. Let $E$ be an elliptic curve defined over a number field $K$.  Let $L/K$ be an arbitrary algebraic extension. Then it is known that there is an exact sequence
\begin{equation}\label{seltate}
    0\to E(L)\otimes \mathbb{Q}/\mathbb{Z}\to {\rm Sel}_{E}(L)\to \Sha_{E}(L)\to 0,
\end{equation}
where $E(L)$ denotes the group of $L$-rational points of $E$, ${\rm Sel}_{E}(L)$ is the classical Selmer group for $E$ over $L$, and $\Sha_{E}(L)$ is the Tate--Shafarevich group of $E$ over $L$.  In the Iwasawa theory for elliptic curves, a natural first object to consider is the cyclotomic $\mathbb{Z}_{p}$-extension $K_{\infty}$ which is defined as the unique subfield of $K(W)/K$ with $\Gamma:={\rm Gal}(K_{\infty}/K)\cong \mathbb{Z}_{p}$, where $W$ is the group of $p$-power roots of unity. Since $\Gamma$ acts on ${\rm Sel}_{E}(K_{\infty})[p^{\infty}]$ naturally, the Pontryagin dual ${\rm Sel}_{E}(K_{\infty})[p^{\infty}]^{\lor}:={\rm Hom}_{{\rm cont}}({\rm Sel}_{E}(K_{\infty})[p^{\infty}], \mathbb{Q}_{p}/\mathbb{Z}_{p})$ has a natural structure of a compact $\Lambda$-module. 
\begin{conj}[Mazur \cite{mazur}]\label{gentor}
    The $\Lambda$-module ${\rm Sel}_{E}(K_{\infty})[p^{\infty}]^{\lor}$ is a finitely generated torsion $\Lambda$-module.
\end{conj}
Some special cases of this conjecture  follow from the following Mazur's control theorem.
\begin{thm}[Mazur's control theorem \cite{mazur}]\label{mazurcont}Assume that $E$ has good ordinary reduction at all primes
of $K$ lying over $p$. Then the natural homomorphism
    \begin{equation}
        {\rm Sel}_{E}(K_{n})\to {\rm Sel}_{E}(K_{\infty})^{\Gamma^{n}}
    \end{equation}
    has bounded kernel and cokernel  as $n$ varies, where $\Gamma^{n}:={\rm Gal}(K_{\infty}/K_{n})$ and $(-)^{\Gamma^{n}}$ means the $\Gamma^{n}$-invariant part. 
\end{thm}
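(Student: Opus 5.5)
We follow Mazur's original strategy and reduce the comparison of Selmer groups to a comparison of local and global Galois cohomology. Work with the $p$-primary parts throughout, since only these are affected by passing to $K_\infty$. By definition ${\rm Sel}_{E}(L)[p^\infty]$ is the kernel of
\begin{equation}
H^{1}(L,E[p^{\infty}])\to \prod_{v} H^{1}(L_{v},E)[p^{\infty}].
\end{equation}
For each $n$ we place the defining sequences for $K_n$ and for $K_\infty$ (taking $\Gamma^{n}$-invariants in the second) one above the other in a commutative diagram. The vertical maps are restriction maps:
\begin{equation}
s_n\colon {\rm Sel}_{E}(K_{n})\to {\rm Sel}_{E}(K_{\infty})^{\Gamma^{n}},\qquad h_n\colon H^{1}(K_n,E[p^{\infty}])\to H^{1}(K_\infty,E[p^{\infty}])^{\Gamma^{n}},\qquad g_n=\prod_v g_{n,v}.
\end{equation}
The snake lemma then gives $\#\Ker s_n\le \#\Ker h_n$ and $\#\Coker s_n\le \#\Coker h_n\cdot\#(\Ker g_n\cap \operatorname{im})$. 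So it suffices to bound $\Ker h_n$, $\Coker h_n$, and the kernels of the local maps $g_{n,v}$ on the image of the Selmer sequence, uniformly in $n$.

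\textbf{Global term.} By inflation--restriction, $\Ker h_n=H^{1}(\Gamma^{n},E(K_\infty)[p^\infty])$, and $\Coker h_n$ injects into $H^{2}(\Gamma^{n},E(K_\infty)[p^\infty])$. The second group vanishes because $\Gamma^n\cong\mathbb{Z}_p$ has cohomological dimension one. By Mordell--Weil and Kato's theorem (or Imai's theorem), $E(K_\infty)[p^\infty]$ is finite. For a procyclic group acting on a finite module $A$, one has $\#H^{1}(\Gamma^n,A)\le\#A$. This gives a uniform bound.

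\textbf{Local terms away from $p$.} We may replace $\prod_v$ by a product over the finitely many places $S$ of bad reduction or above $p$, together with the infinite places. At every other place the Selmer image lies in the unramified classes, and the kernel of $g_{n,v}$ there is trivial. In the cyclotomic $\mathbb{Z}_p$-extension each finite prime is finitely decomposed, so the number of places of $K_n$ above $S$ is bounded independently of $n$. For $v\nmid p$, the extension $K_{\infty,w}/K_{n,v}$ is unramified (or trivial). Then $\Ker g_{n,v}=H^{1}(K_{\infty,w}/K_{n,v},E(K_{\infty,w}))[p^\infty]$. By the Tate local duality argument this is bounded by the $p$-part of the Tamagawa number $c_v$, or by $\#E(K_{\infty,w})[p^\infty]/\text{divisible}$, which is independent of $n$. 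Archimedean places contribute nothing since $p$ is odd; for $p=2$ they contribute a factor of at most $2$.

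\textbf{Local terms above $p$.} This is the main obstacle, and it is where good ordinary reduction is used. For $v\mid p$, the extension $K_{\infty,w}/K_{n,v}$ is deeply ramified, and we need to bound $H^{1}(\Gamma_w^{n},E(K_{\infty,w}))[p^\infty]$. I would first use the reduction sequence
\begin{equation}
0\to\hat E(\mathfrak m)\to E\to\tilde E\to0
\end{equation}
and split the cohomology accordingly. The $\tilde E$ part is bounded by $\#\tilde E(k_w)[p^\infty]$, since the residue field is stable in the tower. The formal group part is the core of the argument. For an ordinary formal group of height one, Mazur's universal-norm result states that the norm maps $\hat E(\mathfrak m_{K_{m,w}})\to\hat E(\mathfrak m_{K_{n,w}})$ are surjective up to a bounded index. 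This bound comes from the finite subgroup $\tilde E(k)[p^\infty]$ and the twisting character of the étale quotient of $E[p^\infty]$. Equivalently, one can write the local condition as the image of $H^1(K_{n,v},\mathcal{F}^+)$, where $\mathcal{F}^+$ is the kernel of reduction on $E[p^\infty]$, following Greenberg. Then $\Ker g_{n,v}$ embeds in
\begin{equation}
H^{1}(\Gamma^n_w,\tilde E[p^\infty](K_{\infty,w}))\oplus \big(\text{finite }H^0\text{-terms}\big),
\end{equation}
and these are bounded by $\#\tilde E(k_w)[p^\infty]$. I would try the Greenberg formulation first, because it reduces everything to $H^0$ and $H^1$ of $\mathbb{Z}_p$ acting on finite or cofinitely generated modules with no invariants of positive corank. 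Combining the three bounds gives uniformly bounded kernel and cokernel.
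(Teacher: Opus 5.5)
The paper does not prove this statement. It quotes it from Mazur \cite{mazur} only as motivation. Its own control theorem (Theorem \ref{K3conto}) works differently: there are no local conditions, and ${\rm Br}(X_n)[p^\infty]$ is compared with ${\rm Br}(\overline{X})^{G^n}[p^\infty]$ via Hochschild--Serre and the Artin--Tate formula. So there is no paper proof to compare against. Your outline is the standard Mazur--Greenberg proof, and its skeleton is correct:
\begin{itemize}
\item the $\ell$-primary parts for $\ell\neq p$ map isomorphically because $\Gamma^n$ is pro-$p$;
\item your snake-lemma bounds are right;
\item $\Ker h_n=H^1(\Gamma^n,E(K_\infty)[p^\infty])$ and $\Coker h_n=0$;
\item the local kernels can indeed be bounded prime by prime.
\end{itemize}

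Several details need correcting.

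(i) Finiteness of $E(K_\infty)[p^\infty]$ is Imai's theorem at a prime $v\mid p$, using good reduction there and $K_{\infty,w}\subset K_v(\mu_{p^\infty})$. Mordell--Weil says nothing over $K_\infty$, and Kato is not the relevant reference.

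(ii) Archimedean places split completely in $K_\infty/K$ because $\Gamma$ is torsion-free. So $K_n$ has $p^n$ times as many of them as $K$, and a factor of $2$ per place for $p=2$ would not be bounded. In fact $K_{\infty,w}=K_{n,v}$ at such places, so $\Ker g_{n,v}=0$ for every $p$. Only finite primes are finitely decomposed.

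(iii) For finite $v\nmid p$ of good reduction, $\Ker g_{n,v}=0$ because $H^1(K_{\infty,w}/K_{n,v},E(K_{\infty,w}))=0$. The reason is that $\hat E$ is uniquely $p$-divisible there, and Lang's theorem kills $H^1$ of $\tilde E$ over the residue extension. It has nothing to do with where Selmer classes land: the snake lemma involves the image of all of $H^1(K_n,E[p^\infty])$, which is not unramified in general. For bad $v\nmid p$, put $A=E(K_{\infty,w})[p^\infty]$. The bound $[A:A_{\rm div}]$ needs the observation that $\gamma-1$ is surjective on $A_{\rm div}$, which holds because $A^{\Gamma^n_w}=E(K_{n,v})[p^\infty]$ is finite.

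(iv) At $v\mid p$, your norm statement is false for an arbitrary height-one formal group. For $\hat{\mathbb G}_m$ in the cyclotomic tower, $N(U^1_{K_m})$ has index $p^m$ in $U^1_K$, which is unbounded. What is true is the statement for $\hat E$ itself. Here $\hat E[p^\infty]\cong\mu_{p^\infty}\otimes\phi^{-1}$, with $\phi$ the unramified unit-root character. By the Weil bound $\phi\neq 1$, and this is exactly why $\tilde E(k_w)[p^\infty]$ is finite; this is where the ordinary hypothesis really enters.

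Similarly, at finite level the Kummer image is not ${\rm im}\,H^1(K_{n,v},\mathcal F^+)$ but its maximal divisible subgroup. By local duality and the Weil pairing, the quotient has order at most $\#H^2(K_{n,v},T_p\mathcal F^+)=\#\tilde E(k_{n,v})[p^\infty]$. Equality with ${\rm im}\,H^1(\mathcal F^+)$ holds only over the deeply ramified field $K_{\infty,w}$, by Coates--Greenberg. That identification, or equivalently Mazur's norm theorem for $\hat E$, is the one deep input you cite rather than prove. Granting it, each $v\mid p$ contributes at most $(\#\tilde E(k_w)[p^\infty])^2$, and there are finitely many such $v$, so your conclusion stands.
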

When $E$ is defined over $\mathbb{Q}$ and $K/\mathbb{Q}$ is a finite abelian extension, it is known that Conjecture \ref{gentor} holds for the base change $E\times_{\mathbb{Q}}{\rm Spec}(K)$ (Kato \cite{kato}, Rubin \cite{rubin}).

Assuming Conjecture  \ref{gentor}, Greenberg \cite{gre} proved that the Mordell--Weil ranks of $E(K_{n})$, for $n\geq0$, are bounded by the $\lambda$-invariant $\lambda_{E}$ of ${\rm Sel}_{E}(K_{\infty})[p^{\infty}]^{\lor}$. The Iwasawa-type formula for elliptic curves is the following.
\begin{thm}[Greenberg \text{\cite{gre}}]\label{iwasawatypeell} Let $E$ be an elliptic curve over a number field $K$. Assume that $E$ has good ordinary reduction at all primes
of $K$ lying over $p$. Suppose that Conjecture \ref{gentor} for $E/K$ is true and that the groups $\Sha_{E}(K_{n})[p^{\infty}]$ are finite for every $n\geq0$. Then there exist unique integers $\mu, \lambda\in \mathbb{Z}_{\geq0}$, and $\nu\in\mathbb{Z}$ such that, for sufficiently large $n\geq0$ we have
    \begin{equation}
        \#\Sha_{E}(K_{n})[p^{\infty}]=p^{\mu p^{n}+\lambda n+\nu}.
    \end{equation}
\end{thm}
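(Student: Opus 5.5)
The plan is to combine Mazur's control theorem (Theorem \ref{mazurcont}) with the structure theory of finitely generated torsion $\Lambda$-modules, following the template of the proof of Theorem \ref{oriiwasawa}. Put $X:={\rm Sel}_{E}(K_{\infty})[p^{\infty}]^{\lor}$. By Conjecture \ref{gentor}, $X$ is finitely generated and torsion over $\Lambda\cong\mathbb{Z}_p[[T]]$. Hence there is a pseudo-isomorphism
\[
X\to \bigoplus_i \Lambda/(p^{m_i})\oplus\bigoplus_j \Lambda/(f_j(T)^{e_j}),
\]
with $f_j$ distinguished polynomials. Set $\mu=\sum m_i$ and $\lambda=\sum e_j\deg f_j$.

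The first step is to control the Mordell--Weil part. Dualizing $X$ shows that ${\rm Sel}_E(K_\infty)[p^\infty]$ has $\mathbb{Z}_p$-corank $\lambda$. Each ${\rm Sel}_E(K_n)[p^\infty]$ maps to it with finite kernel, by Theorem \ref{mazurcont}. Therefore the $\mathbb{Z}_p$-corank of ${\rm Sel}_E(K_n)[p^\infty]$ is bounded by $\lambda$. Since $\Sha_E(K_n)[p^\infty]$ is finite, the exact sequence \eqref{seltate} gives $\operatorname{rank} E(K_n)=\operatorname{corank}{\rm Sel}_E(K_n)[p^\infty]$. These ranks form a nondecreasing bounded sequence, so they stabilize at some value $r$ for $n\ge n_0$. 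For $n\ge n_0$ we then get $\Sha_E(K_n)[p^\infty]\cong {\rm Sel}_E(K_n)[p^\infty]/(\text{divisible part})$.

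The second step compares with $\Gamma^n$-invariants. Write $\omega_n=(1+T)^{p^n}-1$. Pontryagin duality gives $({\rm Sel}_E(K_\infty)[p^\infty]^{\Gamma^n})^\lor=X/\omega_nX=X_{\Gamma^n}$. The divisible part of the Selmer group corresponds to the $\mathbb{Z}_p$-free quotient of the dual. So I must compute the order of the torsion subgroup of $X/\omega_n X$, and relate it to $\#\Sha_E(K_n)[p^\infty]$ using Theorem \ref{mazurcont}.

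This relation is the main obstacle. Theorem \ref{mazurcont} only says kernel and cokernel are bounded. To get an exact formula with a constant $\nu$ for large $n$, I need the kernels and cokernels to actually stabilize in order, not just stay bounded. I would handle this by following Greenberg's argument. The kernel is controlled by $H^1(\Gamma^n,E(K_{\infty})[p^\infty])$ together with local terms at primes above $p$ and bad primes. These are finite, bounded, and given by explicit cohomology of modules whose orders are eventually constant, because the relevant Galois modules become fixed by $\Gamma^n$ for large $n$ (ordinary reduction at $p$, and primes finitely decomposed in $K_\infty$).

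Finally, I compute $\#(X/\omega_nX)_{\rm tors}$ for large $n$. Roots of $f_j$ that are of the form $\zeta-1$, with $\zeta$ a $p$-power root of unity, contribute to the free part. Their number is exactly $r$, matching the stabilized rank. For the other components, the standard estimate from Iwasawa's proof gives $p^{\mu p^n+(\lambda-r)n+c}$ for large $n$. The components sharing a root with $\omega_n$ contribute torsion growing like $p^{n\cdot(\text{multiplicity})}$ up to a constant. I would use $\omega_n/\omega_{n_0}$ on those components, which adds the missing $rn$, so the total exponent is $\mu p^n+\lambda n+\nu$.

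Uniqueness of $\mu,\lambda,\nu$ is immediate: the function $n\mapsto \mu p^n+\lambda n+\nu$ determines its coefficients from its values at large $n$.
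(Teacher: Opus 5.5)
The paper gives no proof of this statement; it is quoted from Greenberg. Your overall route (Mazur's control theorem, stabilization of ${\rm rank}\,E(K_n)$, structure theory for $X$) is the standard one. As written, however, your final computation is wrong.

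Since $({\rm Sel}_E(K_\infty)[p^\infty]^{\Gamma^n})^\lor=X/\omega_nX$, you must count the torsion of $X/\omega_nX$ itself. Let $\Phi_{p^k}$ be the $p^k$-th cyclotomic polynomial and $n\geq k$. A component $\Lambda/(\Phi_{p^k}(1+T)^e)$ contributes $\mathbb{Z}_p$-rank $\varphi(p^k)$, but torsion of order only $p^{(e-1)\varphi(p^k)n+O(1)}$, not $p^{e\varphi(p^k)n+O(1)}$. For instance, $X=\Lambda/(T)$ gives $X/\omega_nX\cong\mathbb{Z}_p$, which has no torsion at all. Dividing by $\omega_n/\omega_{n_0}$ on these components computes a different module, not the dual of the $\Gamma^n$-invariants, so there is no ``missing $rn$''. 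The correct count is $\#(X/\omega_nX)_{\rm tors}=p^{\mu p^n+(\lambda_E-r)n+\nu}$ for $n\gg0$. So the theorem's $\lambda$ is $\lambda_E-r$, as the paper notes right after the statement. Your claimed exponent $\mu p^n+\lambda n+\nu$, with your $\lambda=\sum e_j\deg f_j=\lambda_E$, is false whenever $r>0$.

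The second gap is the step you yourself flag: passing from $\#\Sha_E(K_n)[p^\infty]=\#(X/\omega_nX)_{\rm tors}\cdot p^{O(1)}$ to an eventually constant discrepancy. Eventual constancy of $\#\ker s_n$ and $\#{\rm coker}\,s_n$ would not suffice by itself. Let $f\colon A\to B$ be a map of cofinitely generated $\mathbb{Z}_p$-modules with finite kernel and cokernel. Then $f(A_{\rm div})=B_{\rm div}$ and
\[
\#(A/A_{\rm div})\cdot\#{\rm coker}\,f=\#(B/B_{\rm div})\cdot\#\bigl(\ker f/(\ker f\cap A_{\rm div})\bigr).
\]
So you also need $\#\ker\bigl(E(K_n)\otimes\mathbb{Q}_p/\mathbb{Z}_p\to E(K_\infty)\otimes\mathbb{Q}_p/\mathbb{Z}_p\bigr)$ to stabilize. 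It does, since it is non-increasing once the rank is constant, but this has to be argued.

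More seriously, ${\rm coker}\,s_n$ is not a local cohomology group. By the snake lemma it is the intersection of the local kernel $\ker\bigl(\prod_v H^1(K_{n,v},E)[p^\infty]\to\prod_w H^1(K_{\infty,w},E)[p^\infty]\bigr)$ with the image of global cohomology, modulo the image of $H^1(\Gamma^n,E(K_\infty)[p^\infty])$. Its order is therefore not controlled by the local Galois modules becoming $\Gamma^n$-fixed. Your sentence to that effect does not establish the exact formula; this is where a genuine global argument is needed.
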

Assuming Conjecture \ref{gentor}, let $\lambda^{M-W}$ be the maximum of the rank of $E(K_{n})$, and let $\mu_{E}$ be the $\mu$-invariant of ${\rm Sel}_{E}(K_{\infty})[p^{\infty}]^{\lor}$. Then $\mu$ and $\lambda$ in Theorem \ref{iwasawatypeell} are $\mu=\mu_{E}$ and $\lambda=\lambda_{E}-\lambda^{M-W}$.

By the short exact sequence \eqref{seltate}, the finiteness of ${\rm Sel}_{E}(K_{n})[p^{\infty}]$ is equivalent to the finiteness of $E(K_{n})$ and $\Sha_{E}(K_{n})[p^{\infty}]$. Hence, as a special case of Theorem \ref{iwasawatypeell}, we have
\begin{cor}\label{seliwasawa} Let $E$ be an elliptic curve over a number field $K$. Assume that $E$ has good ordinary reduction at all primes of $K$ lying over $p$. Suppose that the group ${\rm Sel}_{E}(K_{n})[p^{\infty}]$ is finite for every $n\geq0$. Then there exists a unique $\nu$ such that, for sufficiently large $n\geq0$, we have
    \begin{equation}
        \#{\rm Sel}_{E}(K_{n})[p^{\infty}]=p^{\mu_{E}p^{n}+\lambda_{E}n+\nu}.
    \end{equation}
\end{cor}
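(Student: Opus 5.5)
The plan is to deduce Corollary \ref{seliwasawa} from Theorem \ref{iwasawatypeell} by way of the exact sequence \eqref{seltate}. Three things must be checked: the hypotheses of Theorem \ref{iwasawatypeell} hold; the Selmer group coincides with the Tate--Shafarevich group at every finite level $n$; and the resulting invariants are $\mu_{E}$ and $\lambda_{E}$.

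First, take $p$-primary parts in \eqref{seltate} for $L=K_{n}$. The group $E(K_n)\otimes\mathbb{Q}/\mathbb{Z}$ is divisible, and its $p$-primary part is $E(K_n)\otimes\mathbb{Q}_p/\mathbb{Z}_p\cong(\mathbb{Q}_p/\mathbb{Z}_p)^{\operatorname{rank}E(K_n)}$. This injects into ${\rm Sel}_{E}(K_{n})[p^{\infty}]$, which is finite by hypothesis, so $\operatorname{rank}E(K_n)=0$ for every $n$. Consequently $E(K_n)\otimes\mathbb{Q}/\mathbb{Z}=0$, and \eqref{seltate} gives an isomorphism ${\rm Sel}_{E}(K_{n})[p^{\infty}]\cong\Sha_{E}(K_{n})[p^{\infty}]$. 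In particular each $\Sha_{E}(K_{n})[p^{\infty}]$ is finite, and $\lambda^{M-W}=0$.

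Second, we verify Conjecture \ref{gentor} for $E/K$. By Mazur's control theorem (Theorem \ref{mazurcont}) at $n=0$, the map ${\rm Sel}_{E}(K)[p^\infty]\to{\rm Sel}_{E}(K_{\infty})[p^\infty]^{\Gamma}$ has finite cokernel. Since the source is finite, the group $X:={\rm Sel}_{E}(K_{\infty})[p^{\infty}]^{\lor}$ has finite $X_{\Gamma}$. Compact Nakayama then shows that $X$ is finitely generated over $\Lambda$. Moreover, $X/TX$ finite with $T=\gamma-1$ forces $X$ to be $\Lambda$-torsion, for otherwise a free summand of rank $r\geq1$ in the structure theorem would contribute $\mathbb{Z}_p^{r}$ to $X_\Gamma$. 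This is the step I expect to need the most care: one must handle pseudo-isomorphism correctly, which can be done by noting that finite kernels and cokernels do not affect finiteness of coinvariants up to the finite $H_1$-type terms.

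Finally, Theorem \ref{iwasawatypeell} applies. It yields $\#{\rm Sel}_{E}(K_{n})[p^{\infty}]=\#\Sha_{E}(K_{n})[p^{\infty}]=p^{\mu p^n+\lambda n+\nu}$ for large $n$, with $\mu=\mu_E$ and $\lambda=\lambda_E-\lambda^{M-W}=\lambda_E$ by the identification stated after Theorem \ref{iwasawatypeell}. Uniqueness of $\nu$ is immediate, since the exponent is determined by the group order once $\mu_E$ and $\lambda_E$ are fixed.
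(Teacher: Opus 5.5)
Your proposal is correct and follows the paper's route. Like the paper, you use \eqref{seltate} to force $\operatorname{rank}E(K_n)=0$, which identifies ${\rm Sel}_{E}(K_{n})[p^{\infty}]$ with $\Sha_{E}(K_{n})[p^{\infty}]$ and gives $\lambda^{M-W}=0$, and you then read off the formula as a special case of Theorem \ref{iwasawatypeell}; your additional derivation of Conjecture \ref{gentor} from the finiteness of ${\rm Sel}_{E}(K)[p^{\infty}]$, via Theorem \ref{mazurcont} and compact Nakayama, is sound and supplies a hypothesis the paper's one-line deduction leaves implicit.
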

Each finitely generated torsion $\Lambda$-modules has an associated \emph{characteristic polynomial}, which is a generator of \emph{the characteristic ideal} (see Definition \ref{charideal}). The Iwasawa main conjecture claims equalities between characteristic ideals and $p$-adic L-functions. In the case of number fields, Mazur and Wiles \cite{mw} proved the conjecture for the cyclotomic $\Z_p$-extension $K_\infty/K$ of an arbitrary abelian extension $K/\Q$. {Wiles \cite{wiles1} later generalized the Iwasawa main conjecture to cyclotomic $\Z_p$-extensions of abelian extensions of totally real fields.

On the other hand, Mazur and Swinnerton-Dyer formulated an analogue  of the Iwasawa main conjecture for elliptic curves. 
\begin{conj}[Mazur and Swinnerton-Dyer \cite{ms}] \label{imcell}Choose a topological generator $\gamma\in \Gamma$, and fix an isomorphism $\Lambda\cong \mathbb{Z}_{p}[[T]]$ via $\gamma\mapsto 1+T$ (see Proposition \ref{idlambda}). Let $K/\mathbb{Q}$ be an abelian extension. Let $E_\mathbb{Q}$ be an elliptic curve and let $E$ denote the scalar extension to $K$. Assume that $K\cap \mathbb{Q}_{\infty}=\mathbb{Q}$ in a fixed algebraic closure $\overline{\mathbb{Q}}$. Suppose that $E$ has good ordinary reduction at all primes of $K$ lying over $p$. Then we have an equality of ideals in $\Lambda$
    \begin{equation}
        {\rm Char}_{\Lambda}({\rm Sel}_{E}(K_{\infty})[p^{\infty}]^{\lor})= (L_{p}(E, T)),
    \end{equation}
    where $L_{p}(E, T)$ is the $p$-adic L-function of $E/K$. 
\end{conj}
It is known that Conjecture \ref{imcell} is true in many cases (Kato \cite{kato}, Rubin \cite{rubin}, and Skinner--Urban \cite{su}). 
\subsection{Main results} Let $X$ be a K3 surface over a finite field $\mathbb{F}_{q}$, i.e., a smooth complete surface satisfying $ H^{1}(X, \mathcal{O}_{X})=0$ and $ \Omega^{2}_{X}\cong \mathcal{O}_{X}$. Let $p\notmid q$ be a prime. Fix an algebraic closure $\overline{\mathbb{F}}_{q}$. We consider a tower of finite fields
\begin{equation}
    \mathbb{F}_{q}\subset \mathbb{F}_{q^{p}}\subset\mathbb{F}_{q^{p^{2}}}\subset\cdots \subset \mathbb{F}_{q^{p^{n}}}\subset \cdots \subset\mathbb{F}_{q^{p^{\infty}}},
\end{equation}
where $\mathbb{F}_{q^{p^{\infty}}}:=\bigcup_{n\geq0} \mathbb{F}_{q^{p^{n}}}$. Since $\Gamma_{n}:={\rm Gal}(\mathbb{F}_{q^{p^{n}}}/\mathbb{F}_{q})\cong \mathbb{Z}/p^{n}\mathbb{Z}$, we have $\Gamma:={\rm Gal}(\mathbb{F}_{q^{p^{\infty}}}/\mathbb{F}_{q})\cong \mathbb{Z}_{p}$. Moreover,  we can consider the abelian extension $\mathbb{F}_{q^{p^{\infty}}}/\mathbb{F}_{q}$ as an analogue of cyclotomic $\mathbb{Z}_{p}$-extensions for number fields because the field $\mathbb{F}_{q^{p^{\infty}}}$ is contained in the field generated by all $p$-power roots of unity over $\mathbb{F}_{q}$. Let $X_{n}:=X\times_{\mathbb{F}_{q}}{\rm Spec}(\mathbb{F}_{q^{p^{n}}})$ and $X_{\infty}:=X\times_{\mathbb{F}_{q}}{\rm Spec}(\mathbb{F}_{q^{p^{\infty}}})$. Then we have a tower of finite \'etale morphisms 
\begin{equation}
  X\leftarrow  X_{1}\leftarrow X_{2}\leftarrow\cdots \leftarrow X_{n}\leftarrow \cdots \leftarrow X_{\infty},
\end{equation}
where the morphism $X\leftarrow X_{n}$ is a cyclic \'etale cover of the degree $p^{n}$, i.e, ${\rm Aut}(X_{n}/X)\cong \mathbb{Z}/p^{n}\mathbb{Z}$.  We call this tower  \emph{the constant $\mathbb{Z}_{p}$-tower over the K3 surface $X/\mathbb{F}_{q}$}.

\emph{The Artin\blue{--}Tate conjecture} (see Theorem \ref{artintate}) is the analogue, for algebraic surfaces over finite fields, of the BSD conjecture. In this analogy, the factor involving the order of the Tate--Shafarevich group in the BSD conjecture is replaced by the order of the \emph{Brauer group}. Since the Tate conjecture is known to hold for K3 surfaces over finite fields (Madapusi Pera \cite{mp15} and Kim--Madapusi Pera\cite{kmp}), the Artin–Tate conjecture is also known to hold for K3 surfaces. As an analogue of the Iwasawa-type formula (Theorem \ref{oriiwasawa} and Theorem \ref{iwasawatypeell}), we obtain the following formula.
\begin{thm}\label{iwasawatype}  Let $X/\mathbb{F}_{q}$ be a K3 surface. Let $X_{n}:=X\times_{\mathbb{F}_{q}}{\rm Spec}(\mathbb{F}_{q^{p^{n}}})$. Then there exist unique integers $\mu, \lambda\in\mathbb{Z}_{\geq0}$ and $\nu\in\Z$ such that, for every sufficiently large $n\geq 0$, we have
\[
\# {\rm Br}(X_n)[p^{\infty}]=p^{\mu p^n+\lambda n+\nu}.
\]
\end{thm}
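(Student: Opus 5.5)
The plan is to read off $\#{\rm Br}(X_n)[p^\infty]$, with $p={\rm char}\,\mathbb{F}_q$, from the Artin--Tate formula (Theorem \ref{artintate}) applied to each $X_n$, and then to compute $p$-adic valuations of the resulting Weil-number expressions.

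\emph{Step 1: Artin--Tate for the $X_n$.} Write $P_2(X,T)=\prod_{i=1}^{22}(1-\alpha_iT)$ and set $\beta_i:=\alpha_i/q$. The base change $X_n$ is again a K3 surface over $\mathbb{F}_{q^{p^n}}$, and its eigenvalues are $\alpha_i^{p^n}$. For a K3 surface $\chi(\mathcal{O}_X)-1+\dim{\rm PicVar}=1$ and ${\rm NS}$ is torsion free. So the Artin--Tate formula, which holds including the $p$-part, gives
\[
\#{\rm Br}(X_n)\cdot|{\rm disc}\,{\rm NS}(X_n)| \;=\; q^{p^n}\prod_{\beta_i^{p^n}\neq 1}\bigl(1-\beta_i^{p^n}\bigr).
\]
Since ${\rm Gal}(\overline{\mathbb{F}}_q/\mathbb{F}_q)$ acts on the finitely generated group ${\rm NS}(X_{\overline{\mathbb{F}}_q})$ through a finite quotient, ${\rm NS}(X_n)$ stabilizes for $n\gg0$. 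Hence $|{\rm disc}\,{\rm NS}(X_n)|$ is eventually constant and contributes only to $\nu$. For the same reason the index set $\{i:\beta_i^{p^n}\neq1\}$ is eventually constant, namely those $\beta_i$ that are not roots of unity.

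\emph{Step 2: valuations.} Fix an embedding $\overline{\mathbb{Q}}\hookrightarrow\mathbb{C}_p$ and let $v$ be the valuation normalized by $v(p)=1$. Since the $\alpha_i$ are algebraic integers, $v(\beta_i)\ge -v(q)$. I treat each factor $1-\beta^{p^n}$, with $\beta$ not a root of unity, according to $v(\beta)$.

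1. If $v(\beta)<0$, the factor has valuation $p^nv(\beta)$.
2. If $v(\beta)>0$, the factor is a unit.
3. If $v(\beta)=0$, write $\beta=\zeta u$ with $\zeta$ a root of unity of order prime to $p$ and $u$ a principal unit.
   - If $\zeta\ne1$, then $\zeta^{p^n}\neq1$ for all $n$, so the factor is a unit.
   - If $\zeta=1$, then $u$ is not a root of unity, so $\log u\neq0$. Since $\log(u^{p^n})=p^n\log u$ and $\log$ is isometric near $1$, for $n\gg0$ we get $v(1-u^{p^n})=n+v(\log u)$.

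Summing over the 22 factors, for $n\gg0$,
\[
v\bigl(\#{\rm Br}(X_n)\bigr)=p^n\Bigl(v(q)+\sum_{v(\beta_i)<0}v(\beta_i)\Bigr)+n\cdot\#\{i:\beta_i\text{ of type }\zeta=1,\ \text{not a root of unity}\}+C .
\]
This is the required shape. Uniqueness of $(\mu,\lambda,\nu)$ is immediate, because $p^n$, $n$ and $1$ are asymptotically independent.

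\emph{Main obstacle: $\mu\in\mathbb{Z}_{\ge0}$.} The point is to show that $\mu:=v(q)+\sum_{v(\beta_i)<0}v(\beta_i)$ is a nonnegative integer. The quantity $\sum_{v(\alpha_i)<v(q)}v(\alpha_i)$ is the height of the Newton polygon of $P_2$ at the last break point of slope $<1$. Break points of a Newton polygon of an integer polynomial have integer coordinates, so $\mu$ is an integer; the $\lambda$-coefficient is clearly a nonnegative integer. Nonnegativity of $\mu$ follows from the constraints on the Newton polygon of a K3 surface: by symmetry $\alpha\leftrightarrow q^2/\alpha$ and Mazur's theorem that the Newton polygon lies above the Hodge polygon (Hodge numbers $1,20,1$), the slopes $<1$ (in units of $v(q)$) are either $1-1/h$ with multiplicity $h$, or absent in the supersingular case. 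In the first case the sum of the negative $v(\beta_i)$ is exactly $-v(q)$, giving $\mu=0$. In the supersingular case there are no negative terms and $\mu=v(q)$. Alternatively, nonnegativity is forced directly, since $v(\#{\rm Br}(X_n))\ge0$ for all $n$ and the $p^n$-term dominates.
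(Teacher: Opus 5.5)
Your proposal is set in the wrong case. Throughout the paper the prime $p$ is \emph{prime to} $q$: see ``Let $p\nmid q$ be a prime'' just before the statement, and again at the start of Section~\ref{briwasawa}. The tower $\mathbb{F}_{q^{p^n}}$ is a prime-to-characteristic $\mathbb{Z}_p$-tower, and the $\mu_{p^n}$-Kummer sequences, $T_p{\rm Br}(\overline X)$ and $L_{\rm tr}$ all require $p$ to be invertible on $X$. You instead fix $p={\rm char}\,\mathbb{F}_q$. What you sketch is therefore a different theorem, resting on inputs that play no role here (the characteristic-$p$ part of Artin--Tate, Newton-above-Hodge, Artin--Mazur heights). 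As written it does not prove the stated result. The mismatch also shows in your answer: for supersingular $X$ you get $\mu=v(q)>0$, whereas in the paper's setting $\mu(X)=0$ always (Theorem~\ref{mu0}).

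The repair is easy, and it removes your ``main obstacle'' entirely. For $p\nmid q$, $F^{-1}$ is an automorphism of the $\mathbb{Z}_p$-lattice $H^2_{\text{\'et}}(\overline X,\mathbb{Z}_p(1))$, so every $\beta_i$ is a $p$-adic unit and $v(q)=0$. Your cases 1 and 2 never occur. Case 3 alone gives, for $n\gg0$,
$v(\#{\rm Br}(X_n)[p^\infty])=\lambda n+\sum_i v(\log\beta_i)-v(|{\rm disc}\,{\rm NS}(X_n)|)$,
where the sum runs over the $\lambda$ indices with $\beta_i\equiv1\bmod\mathfrak m$ and $\beta_i$ not a root of unity. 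In particular $\mu=0$, and only the prime-to-$q$ part of Theorem~\ref{artintate} is needed. You should also fix your description of the eventual index set. A root of unity $\beta_i$ whose order is not a power of $p$ satisfies $\beta_i^{p^n}\neq1$ for all $n$, so it stays in the product. It is harmless, because your subcase $\zeta\neq1$ makes that factor a unit; this is the content of the paper's Lemmas~\ref{zeta} and~\ref{ngeo}.

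Once corrected, your argument is a valid alternative to the paper's second proof (Theorem~\ref{iwasawatype1}). The paper writes $L_{\rm tr}(X_n,1)=\prod_{\zeta\in W_n}L_{\rm tr}(X,\zeta)$ and evaluates this product via Weierstrass preparation of $L_{\rm tr}(X,1+T)$ and the fundamental evaluation formula (Theorem~\ref{fef}). You instead evaluate each factor $1-\beta_i^{p^n}=\prod_{\zeta\in W_n}(1-\zeta\beta_i)$ directly with the $p$-adic logarithm. The two computations match factor by factor: $1-\beta_i(1+T)$ is a unit unless $\beta_i\equiv1$, in which case its $\lambda$-invariant is $1$. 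Your route is more elementary and gives $\nu$ in closed form. The paper's formulation in terms of $L_{\rm tr}(X,1+T)$ is what ties $\lambda$ to the characteristic ideal in the main conjecture, and to the module-theoretic first proof (Theorem~\ref{iwasawatype2}) via the control theorem.
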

The integers $\mu$, $\lambda$, and $\nu$ are called \emph{the Iwasawa invariants of the constant $\Z_p$-tower over $X$}, and we denote them by $\mu(X)$, $\lambda(X)$, and $\nu(X)$ respectively.

One way to prove this theorem is to use the following analogue, for K3 surfaces, of Mazur's control theorem (Theorem \ref{mazurcont}). 
\begin{thm}\label{cont0} Let $X/\mathbb{F}_{q}$ be a K3 surface. Let $X_{n}:=X\times_{\mathbb{F}_{q}}{\rm Spec}(\mathbb{F}_{q^{p^{n}}})$ and $X_{\infty}:=X\times_{\mathbb{F}_{q}}{\rm Spec}(\mathbb{F}_{q^{p^{\infty}}})$. Then the natural homomorphism
\begin{equation}
{\rm Br}(X_{n})[p^{\infty}]\to {\rm Br}(X_{\infty})^{\Gamma^{n}}[p^{\infty}]
\end{equation}
has bounded kernel and cokernel as $n$ varies, where $\Gamma^{n}:={\rm Gal}(\mathbb{F}_{q^{p^{\infty}}}/\mathbb{F}_{q^{p^{n}}})$.
\end{thm}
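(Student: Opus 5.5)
The plan is to compare $\mathrm{Br}(X_n)$ and $\mathrm{Br}(X_\infty)^{\Gamma^n}$ through the Hochschild--Serre spectral sequence for the pro-$p$ Galois cover $X_\infty\to X_n$ with group $\Gamma^n\cong\mathbb{Z}_p$, and then to bound the error terms using the structure of $\mathrm{Pic}$ of a K3 surface.

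\textbf{Step 1: the spectral sequence.} Since $\Gamma^n$ has cohomological dimension $1$ on torsion modules and $H^3(\Gamma^n,-)$ vanishes on torsion, the spectral sequence $H^i(\Gamma^n,H^j(X_\infty,\mathbb{G}_m))\Rightarrow H^{i+j}(X_n,\mathbb{G}_m)$ has few nonzero terms. We have $H^0(X_\infty,\mathbb{G}_m)=\overline{\mathbb{F}}^\times$-type torsion, namely $\mathbb{F}_{q^{p^\infty}}^\times$. Its $\Gamma^n$-cohomology in degrees $1$ and $2$ is controlled by Hilbert 90 and by the vanishing of $\mathrm{Br}$ of finite fields, applied along the finite layers in a direct limit. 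This should give, on $p$-primary parts, an exact sequence
\[
0\to H^1(\Gamma^n,\mathrm{Pic}(X_\infty))\to \mathrm{Br}(X_n)\to \mathrm{Br}(X_\infty)^{\Gamma^n}\to H^2(\Gamma^n,\mathrm{Pic}(X_\infty)),
\]
up to finite error terms coming from the unit group that are bounded independently of $n$. Here $H^1(X_n,\mathbb{G}_m)$ is compared with $\mathrm{Pic}(X_\infty)^{\Gamma^n}$ in the standard way. So the kernel is essentially $H^1(\Gamma^n,\mathrm{Pic}(X_\infty))[p^\infty]$ and the cokernel injects into $H^2(\Gamma^n,\mathrm{Pic}(X_\infty))[p^\infty]$.

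\textbf{Step 2: the structure of $\mathrm{Pic}(X_\infty)$.} For a K3 surface, $\mathrm{Pic}(X_{\bar{\mathbb{F}}_q})$ is a free $\mathbb{Z}$-module of rank at most $22$, and Galois acts on it through a finite quotient. Hence $P:=\mathrm{Pic}(X_\infty)$ is free of finite rank. The group $\Gamma^n$ acts on $P$ through a finite cyclic $p$-group quotient $G_n$, and for $n\ge n_0$ this action becomes trivial, since the image of $\Gamma$ in the finite group $\mathrm{Aut}(P)$ stabilizes. For $n\ge n_0$ we then get $H^1(\Gamma^n,P)=\mathrm{Hom}_{\rm cont}(\mathbb{Z}_p,P)=0$, because $P$ is torsion-free and discrete. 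For $H^2$, I would use the sequence $0\to P\to P\otimes\mathbb{Q}\to P\otimes\mathbb{Q}/\mathbb{Z}\to0$, which gives $H^2(\Gamma^n,P)\cong H^1(\Gamma^n,P\otimes\mathbb{Q}/\mathbb{Z})\cong P\otimes\mathbb{Q}/\mathbb{Z}$. This is infinite, so the naive bound fails.

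\textbf{Step 3: the main obstacle.} The difficulty is that the map $\mathrm{Br}(X_\infty)^{\Gamma^n}\to H^2(\Gamma^n,P)$ could a priori have infinite image, so the cokernel needs a finer argument. I would first check the Hochschild--Serre convention: for a profinite group acting on a discrete module, $H^2$ of $\mathbb{Z}_p$ with coefficients in a torsion-free module $P$ with trivial action is $\varinjlim H^2(\mathbb{Z}/p^m,P)=\varinjlim P/p^m$. This is $P\otimes\mathbb{Q}_p/\mathbb{Z}_p$, so it is indeed infinite.

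To bound the cokernel I would instead use the Artin--Tate conjecture (Theorem \ref{artintate}). It gives finiteness of $\mathrm{Br}(X_n)$, and the Kummer sequence with the Tate conjecture gives $\mathrm{Br}(X_\infty)[p^\infty]^{\Gamma^n}$ of corank equal to $\operatorname{rank}H^2_{\mathrm{et}}(\bar X,\mathbb{Z}_p(1))^{\Gamma^n}-\operatorname{rank}P^{\Gamma^n}$. The Tate conjecture, in its semisimplicity and Frobenius-eigenvalue form, shows that this corank vanishes. So $\mathrm{Br}(X_\infty)^{\Gamma^n}[p^\infty]$ is a quotient of the finite group $(H^2_{\rm tr}\otimes\mathbb{Q}_p/\mathbb{Z}_p)^{\Gamma^n}$. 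Its order is bounded by the $p$-part of $\prod(1-\alpha_i^{p^n}/q^{p^n})$ over the transcendental Frobenius eigenvalues. This quantity stabilizes in $p$-adic valuation only when no eigenvalue ratio is congruent to $1$, so the bound has to be organized as a comparison rather than an absolute bound.

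Concretely, I would show that the image of $\mathrm{Br}(X_\infty)^{\Gamma^n}[p^\infty]$ in $P\otimes\mathbb{Q}_p/\mathbb{Z}_p$ is contained in the image of the finite group $(H^2(\bar X,\mathbb{Z}_p(1))\otimes\mathbb{Q}_p/\mathbb{Z}_p)^{\Gamma^n}$ under the connecting map. That image is controlled by the torsion of $H^2/P\otimes\mathbb{Z}_p$, i.e.\ by the discriminant of the pairing between $P$ and its saturation. This is independent of $n$ for $n\ge n_0$, which yields the bounded cokernel. The kernel is bounded by Step 2 together with the finitely many $n<n_0$.
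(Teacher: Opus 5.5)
\textbf{Verdict: there is a genuine gap.} The kernel half of your argument is correct; the cokernel bound in Step 3, which is the real content of the theorem, does not work as written.

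\textbf{The kernel (Steps 1--2).} Your route differs from the paper's, and is arguably cleaner. You use Hochschild--Serre for $X_\infty\to X_n$, together with $H^i(\Gamma^n,\mathbb{F}_{q^{p^\infty}}^\times)=0$ for $i\geq 1$. This identifies the kernel with $H^1(\Gamma^n,{\rm Pic}(X_\infty))$, which is finite and vanishes once $\Gamma^n$ acts trivially. The paper instead bounds $H^1(G^n,{\rm Pic}(\overline{X}))$ for the cover $\overline{X}\to X_n$.

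\textbf{Why Step 3 fails.} There are three separate problems.
\begin{enumerate}
\item The group $(H^2_{\text{\'et}}(\overline{X},\mathbb{Z}_p(1))\otimes\mathbb{Q}_p/\mathbb{Z}_p)^{G^n}$ is not finite: it contains ${\rm NS}(X_n)\otimes\mathbb{Q}_p/\mathbb{Z}_p$. Its transcendental part ${\rm Br}(\overline{X})^{G^n}[p^\infty]$ is finite, but its order $p^{v_p(L_{\rm tr}(X_n,1))}$ grows like $p^{\lambda n+\nu}$. So ``contained in the image of a finite group'' gives no uniform bound.
\item The connecting map is defined on ${\rm Br}(X_\infty)^{\Gamma^n}[p^\infty]$, not on $H^2$. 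The containment you propose is therefore not meaningful as stated.
\item $H^2_{\text{\'et}}(\overline{X},\mathbb{Z}_p(1))/({\rm Pic}(X_\infty)\otimes\mathbb{Z}_p)$ is torsion-free. Indeed, ${\rm Pic}(X_\infty)$ consists of the classes fixed by ${\rm Gal}(\overline{\mathbb{F}}_q/\mathbb{F}_{q^{p^\infty}})$, so it is saturated in ${\rm NS}(\overline{X})$, and $T_p{\rm Br}(\overline{X})$ is torsion-free. The torsion you appeal to is therefore zero and would predict a trivial cokernel. In fact, for large $n$ the cokernel has order $p^{v_p(|{\rm disc}\,{\rm NS}(X_n)|)}$, which need not be $1$.
\end{enumerate}

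\textbf{What is missing: the order comparison you mention and then set aside.} For $n\geq n_0$ your kernel vanishes, so
\[
\#{\rm coker}=\#{\rm Br}(X_\infty)^{\Gamma^n}[p^\infty]\,/\,\#{\rm Br}(X_n)[p^\infty].
\]
\begin{itemize}
\item The numerator is at most $\#{\rm Br}(\overline{X})^{G^n}[p^\infty]=p^{v_p(L_{\rm tr}(X_n,1))}$ (Corollary \ref{geoeigen}). The reason is that ${\rm Br}(X_\infty)[p^\infty]\to{\rm Br}(\overline{X})[p^\infty]$ is injective: its kernel lies in the $p$-part of $H^1({\rm Gal}(\overline{\mathbb{F}}_q/\mathbb{F}_{q^{p^\infty}}),{\rm Pic}(\overline{X}))$, which is zero because this Galois group has pro-order prime to $p$. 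The paper obtains the same comparison via Theorem \ref{geoarith} and Lemma \ref{tran}.
\item Divide by the Artin--Tate formula for $X_n$, and use Lemma \ref{ngeo} to discard the root-of-unity eigenvalues. This gives $\#{\rm coker}\le p^{v_p(|{\rm disc}\,{\rm NS}(X_n)|)}$, which is constant once ${\rm NS}(X_n)$ stabilizes. This is exactly Step 2 of the paper's proof.
\end{itemize}

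\textbf{Alternative fix without Artin--Tate.} Your lattice intuition can be made rigorous directly. Put $H=H^2_{\text{\'et}}(\overline{X},\mathbb{Z}_p(1))$, which is unimodular, and $N={\rm NS}(X_n)\otimes\mathbb{Z}_p$.
\begin{itemize}
\item For $n\geq n_0$, the cokernel is the kernel of $N\otimes\mathbb{Q}_p/\mathbb{Z}_p\to(H\otimes\mathbb{Q}_p/\mathbb{Z}_p)/(F^{p^n}-1)$.
\item Use the Frobenius-stable splitting $H\otimes\mathbb{Q}_p=(N\otimes\mathbb{Q}_p)\oplus(N^\perp\otimes\mathbb{Q}_p)$. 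By the Tate conjecture, $F^{p^n}-1$ is invertible on $N^\perp\otimes\mathbb{Q}_p$.
\item This identifies the kernel with the discriminant group $N^\vee/N$, the orthogonal projection of $H$ modulo $N$. The relevant invariant is this discriminant group, not torsion in $H/N$.
\end{itemize}
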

\begin{rem}
    When $p\mid q$, Lai, Longhi, Suzuki, Tan, and Trihan \cite[Remark 3.1.2]{suzuki} have proved a control theorem of the Brauer groups for certain classes of surfaces over $\mathbb{F}_{q}$, including elliptic K3 surfaces.
\end{rem}
Using the proof of Theorem \ref{cont0}, we show that the following $\Lambda$-module
\begin{equation}\label{arimod}
    {\rm Br}(X_{\infty})[p^{\infty}]^{\lor}
\end{equation}
is a finitely generated torsion $\Lambda$-module (Corollary \ref{tormod}). The invariants $\mu(X)$, $\lambda(X)$ are determined by the Iwasawa invariants $\mu({\rm Br}(X_{\infty})[p^{\infty}]^{\lor})$, $\lambda({\rm Br}(X_{\infty})[p^{\infty}]^{\lor})$ of the $\Lambda$-module \eqref{arimod}. 

    Another proof of Theorem \ref{iwasawatype} uses the transcendental part $L_{{\rm tr}}(X, t)\in \mathbb{Z}_{p}[t]$ of the L-function $L(X, t)\in\mathbb{Z}_{p}[t]$ (see Definition \ref{Ltr} and Definition \ref{lfunction}). Via the ring homomorphism $\mathbb{Z}_{p}[t]\ni t\mapsto 1+T\in \mathbb{Z}_{p}[[T]]$, the polynomial  $L_{{\rm tr}}(X, t)$ can be regarded as an element $L_{{\rm tr}}(X, 1+T)$ of $\mathbb{Z}_{p}[[T]]$. Then we can define the $\mu$-invariant $\mu(L_{\rm tr}(X, 1+T))$ and the  $\lambda$-invariant $\lambda(L_{\rm tr}(X, 1+T))$ of $L_{{\rm tr}}(X, 1+T)$ (Definition \ref{polyiwa}). Then Theorem \ref{iwasawatype} follows from the Artin-Tate conjecture and the fundamental evaluation formula (Theorem \ref{fef}), thereby showing $\mu(X)=\mu(L_{\rm tr}(X, 1+T))$ and $\lambda(X)=\lambda(L_{\rm tr}(X, 1+T))$. By computing the $\mu(L_{\rm tr}(X, 1+T))$ directly, we obtain the following theorem (Corollary \ref{coinv}). 
\begin{thm}\label{mu0}
Let $X/{\F}_q$ be a K3 surface. Then $\mu(X)$ is equal to $0$. 
\end{thm}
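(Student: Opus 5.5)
To prove Theorem \ref{mu0} I would go through the identification $\mu(X)=\mu(L_{\rm tr}(X,1+T))$ obtained from the Artin--Tate conjecture and the fundamental evaluation formula (Theorem \ref{fef}), and then compute the $\mu$-invariant of this polynomial directly. For an element $f(T)\in\mathbb{Z}_p[[T]]$, $\mu(f)$ is the largest $m$ such that $p^m$ divides all coefficients of $f$. So $\mu(f)=0$ exactly when the reduction $\bar f\in\mathbb{F}_p[[T]]$ is nonzero. The claim therefore reduces to showing that $L_{\rm tr}(X,1+T)\not\equiv 0 \pmod p$.

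Write the characteristic polynomial of geometric Frobenius on $H^2_{\rm et}(X_{\overline{\mathbb{F}}_q},\mathbb{Q}_\ell)$ as $\prod_i(1-\alpha_i t)$. Here $\ell$ is a prime different from the characteristic; $\ell=p$ is allowed since $p\nmid q$. This polynomial has integer coefficients and constant term $1$. The transcendental part $L_{\rm tr}(X,t)$ of Definition \ref{Ltr} is obtained by removing the factors with $\alpha_i=q$ (or their suitably normalized versions, according to the paper's convention). The removed factors correspond to the Néron--Severi part, by the Tate conjecture.

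The key step is that $L_{\rm tr}(X,t)$ is a polynomial with $\mathbb{Z}_p$-coefficients and constant term $1$, since it is a quotient of monic-at-$0$ integral polynomials by factors of the form $(1-qt)^{\rho}$. By Gauss's lemma the quotient is integral with constant term $1$. Consequently $\bar L_{\rm tr}(X,t)\in\mathbb{F}_p[t]$ has constant term $1$ and is nonzero. Substituting $t=1+T$ gives a nonzero polynomial in $\mathbb{F}_p[T]$, because $t\mapsto 1+T$ is an automorphism of $\mathbb{F}_p[t]$, and a nonzero polynomial remains nonzero in $\mathbb{F}_p[[T]]$. Hence $\mu(L_{\rm tr}(X,1+T))=0$, and so $\mu(X)=0$.

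The main obstacle is to check the normalization in Definition \ref{Ltr}. If the paper normalizes by $t\mapsto t/q$ or similar, then $q$ is a unit in $\mathbb{Z}_p$ because $p\nmid q$, so integrality over $\mathbb{Z}_p$ and the unit leading or constant coefficient still hold. What has to be confirmed is that some coefficient of $L_{\rm tr}(X,t)$ is a $p$-adic unit; the constant term $1$, or a leading coefficient that is a power of $q$ up to sign, provides one. I would also check that the identification $\mu(X)=\mu(L_{\rm tr}(X,1+T))$ really holds, up to the unit factors arising in the Artin--Tate formula: the discriminant of $\mathrm{NS}$ and the powers of $q$ contribute only bounded or unit terms, which affect $\nu$ but not $\mu$.
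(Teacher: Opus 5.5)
Your proposal is correct and follows the paper's proof (Corollary \ref{coinv}): you identify $\mu(X)=\mu(L_{\rm tr}(X,1+T))$ via Theorem \ref{iwasawatype1} and the uniqueness of the invariants, then show $p\nmid L_{\rm tr}(X,1+T)$. For that last step the paper writes each factor as $1-\beta_i(1+T)=(1+T)\bigl((1+T)^{-1}-\beta_i\bigr)$, whose $T$-coefficient is $-1$, whereas you use the constant term $1$ and the invertibility of $t\mapsto 1+T$ over $\mathbb{F}_p$; this is equally valid and avoids passing to a ring containing the $\beta_i$.

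One small correction: integrality and constant term $1$ follow directly from Definition \ref{Ltr}, since $L_{\rm tr}(X,t)=\det(1-F^{-1}t)$ on the free $\mathbb{Z}_p$-module $T_p{\rm Br}(\overline{X})$, so no Gauss-lemma argument is needed. Also, in the paper's normalization the removed N\'eron--Severi factor is a product of terms $1-\zeta t$ with $\zeta$ roots of unity, not $(1-qt)^{\rho}$.
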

    This is an analogue of the vanishing of the $\mu$-invariant in the Iwasawa theory for function fields of one variable over finite fields (Washington \cite[Section 7.4]{Washington}). From the analogue of function fields, Iwasawa \cite{iwasawa2} conjectured that for every cyclotomic $\mathbb{Z}_{p}$-extension of number fields, the $\mu$-invariant is $0$. When the number field is an abelian extension over $\mathbb{Q}$, Ferrero and Washington \cite{fer} proved that the $\mu$-invariant is $0$. 
    
    Therefore, our two different proofs of Theorem \ref{iwasawatype} yield the equalities
    \begin{equation}
        \mu({\rm Br}(X_{\infty})[p^{\infty}]^{\lor})=\mu(L_{{\rm tr}}(X, 1+T))=0, ~\lambda({\rm Br}(X_{\infty})[p^{\infty}]^{\lor})=\lambda(L_{{\rm tr}}(X, 1+T)).
    \end{equation}
    More strongly, we obtain an analogue of the Iwasawa main conjecture for elliptic curves (Conjecture \ref{imcell}).
\begin{thm}\label{Iwasawa main conj k3}  Let $\gamma\in\Gamma$ be the topological generator induced from the arithmetic Frobenius $\overline{\mathbb{F}}_{q}\ni x\mapsto x^{q}\in \overline{\mathbb{F}}_{q}$. Fix the isomorphism $\Lambda\cong \mathbb{Z}_{p}[[T]]$ via $\gamma\mapsto 1+T$. Let $X/\mathbb{F}_{q}$ be a K3 surface, and let $X_{\infty}:=X\times_{\mathbb{F}_{q}}{\rm Spec}(\mathbb{F}_{q^{p^{\infty}}})$. Let $p\notmid q$ be a prime. Then we have an equality of ideals in $\Lambda$
\begin{equation}
{\rm Char}_{\Lambda}({\rm Br}(X_{\infty})[p^{\infty}]^{\lor})=(L_{{\rm tr}}(X, 1+T)).
\end{equation} 
\end{thm}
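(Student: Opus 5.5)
We identify ${\rm Br}(X_\infty)[p^\infty]^{\lor}$ explicitly as a $\Lambda$-module via $\ell$-adic cohomology (here $\ell=p$), and compute its characteristic ideal from the action of Frobenius. Write $\bar X:=X\times_{\F_q}\overline{\F}_q$ and $H:=H^2_{\rm et}(\bar X,\Z_p(1))$, a free $\Z_p$-module of rank $22$ with an action of ${\rm Gal}(\overline{\F}_q/\F_q)$. The Kummer sequence gives, for every field $k$ with $\F_q\subset k\subset\overline{\F}_q$, the exact sequence
\[
0\to {\rm NS}(X_k)\otimes\Q_p/\Z_p\to H^2(X_k,\Q_p/\Z_p(1))\to {\rm Br}(X_k)[p^\infty]\to 0 .
\]
Over $\overline{\F}_q$ this yields ${\rm Br}(\bar X)[p^\infty]\cong (T\otimes\Q_p/\Z_p)\oplus(\text{finite})$, where $T:=H/({\rm NS}(\bar X)\otimes\Z_p)^{\rm sat}$ is the transcendental lattice. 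Since $\F_{q^{p^\infty}}$ is the maximal pro-$p$ part of $\overline{\F}_q$, the extension $\overline{\F}_q/\F_{q^{p^\infty}}$ has Galois group $\prod_{\ell\neq p}\Z_\ell$ of order prime to $p$. Taking invariants is therefore exact on $p$-primary modules, and ${\rm Br}(X_\infty)[p^\infty]$ is the part of ${\rm Br}(\bar X)[p^\infty]$ fixed by this prime-to-$p$ group. This uses the Hochschild--Serre spectral sequence together with $H^1(\bar X,\mu_{p^\infty})=0$ and $H^3(\bar X)$ torsion-free for K3 surfaces.

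The key step is to decompose $T\otimes\Q_p$ under Frobenius. The action of $\gamma$ on the invariant piece is via the arithmetic Frobenius $F$, whose characteristic polynomial on $H^2(\bar X,\Q_p(1))$ is $P_2(X,t)$ (suitably normalized by $q$). The Tate conjecture (Madapusi Pera, Kim--Madapusi Pera) says ${\rm NS}(\bar X)\otimes\Q_p$ is exactly the generalized eigenspace for roots of unity. By definition of $L_{\rm tr}$ (Definition \ref{Ltr}), the remaining factor on $T$ is $L_{\rm tr}(X,t)$. Split $T\otimes\Z_p$ further according to whether the eigenvalues $\alpha$ of $F$ on $T$ satisfy $\alpha\equiv$ a $p$-power root of unity behaviour, i.e.\ whether $\alpha-1$ lies in the maximal ideal (the "$\Lambda$-torsion compatible" part), or $\alpha$ has nontrivial prime-to-$p$ order modulo the maximal ideal.

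Only the first part survives the prime-to-$p$ invariants, since the prime-to-$p$ inertia-like group acts through the Teichmüller component of $F$. On this part, $F$ acts topologically unipotently, so $T'\otimes\Q_p/\Z_p$ becomes a $\Lambda$-module via $\gamma\mapsto F$. Its dual is $\Z_p$-free of finite rank, with characteristic polynomial equal to the factor of $L_{\rm tr}(X,1+T)$ whose roots have $T$-value in the maximal ideal. The remaining factor of $L_{\rm tr}(X,1+T)$ has roots that are not topologically nilpotent, so it is a unit in $\Lambda$. Finally, the finite error terms (the finite part of ${\rm Br}(\bar X)[p^\infty]$, and the non-saturation of ${\rm NS}$) are finite, hence pseudo-null, and do not affect ${\rm Char}_\Lambda$. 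This gives
\[
{\rm Char}_\Lambda({\rm Br}(X_\infty)[p^\infty]^\lor)=(L_{\rm tr}(X,1+T)).
\]
Here we use the standard fact that for a $\Z_p$-free module with an endomorphism $\gamma-1$, the characteristic ideal is generated by $\det(T-(\gamma-1))$. The normalization matching $t$ with the arithmetic (not geometric) Frobenius and the twist must be checked against Definition \ref{lfunction}. If needed, one takes the dual/inverse eigenvalues using Poincaré duality, which is symmetric on $T$.

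The main obstacle will be the prime-to-$p$ descent step: rigorously identifying ${\rm Br}(X_\infty)[p^\infty]$ with the Teichmüller-trivial component of ${\rm Br}(\bar X)[p^\infty]$, and showing that the unit factor of $L_{\rm tr}$ corresponds exactly to the discarded components. As a consistency check, I would compare the result with Theorem \ref{cont0} and the Artin--Tate formula via Theorem \ref{fef}: the $\Gamma^n$-invariant orders should be $|L_{\rm tr}$ evaluated at $p^n$-th roots of unity$|_p^{-1}$ up to bounded error.
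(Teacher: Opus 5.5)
Your argument is correct, and it takes a genuinely different route from the paper's.

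\textbf{The paper's route.} The paper reaches the infinite level only as a limit of finite levels. It bounds the kernel and cokernel of ${\rm Br}(X_n)[p^\infty]\to{\rm Br}(\overline{X})^{G^n}[p^\infty]$ using Hochschild--Serre and the Artin--Tate formula (Theorem \ref{geoarith}). From this it deduces a pseudo-isomorphism ${\rm Br}(X_\infty)[p^\infty]^{\lor}\sim{\rm Br}(\overline{X})^{G^\infty}[p^\infty]^{\lor}$. It then presents the latter module as $(T_p{\rm Br}(\overline{X})^{\ast}\otimes_{\Z_p}\Lambda)/(1-F\gamma^{-1})$ by an inverse-limit argument (Propositions \ref{fgamma}--\ref{fitarith}), and reads off the characteristic ideal from \cite[Proposition 4.3]{tu}.

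\textbf{Your route.} You instead use that $H={\rm Gal}(\overline{\F}_q/\F_{q^{p^\infty}})\cong\prod_{\ell\neq p}\Z_\ell$ has trivial $p$-cohomological dimension. In the Hochschild--Serre spectral sequence for $\overline{X}\to X_\infty$ with $\mathbb{G}_m$-coefficients, every $E_2^{a,b}$ with $a\geq 1$ has trivial $p$-primary part. Hence ${\rm Br}(X_\infty)[p^\infty]\cong{\rm Br}(\overline{X})[p^\infty]^H$ on the nose. Stabilizers are open, and every open subgroup of $G$ containing $H$ is some $G^n$, so the right-hand side is exactly the paper's ${\rm Br}(\overline{X})^{G^\infty}[p^\infty]$. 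In effect you prove that the map in Theorem \ref{geoarith} is bijective at the infinite level.

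To make your ``Teichm\"uller component'' step rigorous:
\begin{itemize}
\item Let $Q$ be the image of $H$ in ${\rm GL}(T_p{\rm Br}(\overline{X}))$. It meets the pro-$p$ kernel of reduction modulo $p$ trivially, so it is finite of order prime to $p$.
\item The idempotent $\frac{1}{\#Q}\sum_{g\in Q}g\in\Z_p[Q]$ then gives $(T_p{\rm Br}(\overline{X})\otimes\Q_p/\Z_p)^H=T'\otimes\Q_p/\Z_p$.
\item Here $T'$ is a $\Z_p$-direct summand carrying exactly the Frobenius eigenvalues $\alpha\equiv1$ modulo the maximal ideal.
\item Your extra ``finite'' summand is zero, by Proposition \ref{brtate}.
\end{itemize}

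\textbf{What each approach buys.} Your route yields considerably more than the statement. Since ${\rm Br}(X_\infty)[p^\infty]^{\lor}\cong T'^{\ast}$ is $\Z_p$-free:
\begin{itemize}
\item $\mu=0$ and the absence of nonzero finite submodules are immediate;
\item $\lambda$ is the number of Frobenius eigenvalues on $T_p{\rm Br}(\overline{X})$ congruent to $1$;
\item this statement needs neither the Artin--Tate formula nor, given that $L_{{\rm tr}}$ is defined via $T_p{\rm Br}(\overline{X})$, the Tate conjecture.
\end{itemize}
The paper's finite-level analysis is still what it needs for the control theorem and for $\nu$, where ${\rm disc}\,{\rm NS}(X_n)$ genuinely enters.

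Finally, the $\alpha\leftrightarrow\alpha^{-1}$ step you flag is genuinely needed, because $\gamma$ acts contragrediently on the Pontryagin dual. Poincar\'e duality on $T_p{\rm Br}(\overline{X})\otimes\Q_p$ supplies it. The paper's last display relies on the same symmetry without stating it.
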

From our main results, we obtain the following analogy between elliptic curves and K3 surfaces.
\begin{center}
\begin{tblr}{|l|r|r|r|} \hline
    \SetCell[c=1]{c}object  & \SetCell[c=1]{c}  Iwasawa module & \SetCell[c=1]{c}Iwasawa-type formula &\SetCell[c=1]{c}L-function\\ \hline
 elliptic curve $E/K$& ${\rm Sel}_{E}(K_{\infty})[p^{\infty}]^{\lor}$ &$\#\Sha_{E}(K_{n})[p^{\infty}]=p^{\mu p^{n}+\lambda n+\nu}$ & $L_{p}(E, T)$ \\
  K3 surface $X/\mathbb{F}_{q}$ &${\rm Br}(X_{\infty})[p^{\infty}]^{\lor}$  &$\# {\rm Br}(X_n)[p^{\infty}]=p^{\mu p^n+\lambda n+\nu}$ & $L_{{\rm tr}}(X, 1+T)$ \\ \hline
 \end{tblr}
\end{center}
\begin{rem}
    We have seen that the Iwasawa module ${\rm Br}(X_{\infty})[p^{\infty}]^{\lor}$ is the projective limit of ${\rm Br}(X_{n})[p^{\infty}]^{\lor}$ (Proposition \ref{nlim}), and we know that each ${\rm Br}(X_{n})[p^{\infty}]$ is finite. From this viewpoint, if we assume that ${\rm Sel}_{E}(K_{n})[p^{\infty}]$ is finite for every $n$, then Corollary \ref{seliwasawa} shows that, in the above table, the term corresponding to $\Sha_{E}(K_{n})[p^{\infty}]$ should be replaced by ${\rm Sel}_{E}(K_{n})[p^{\infty}]$.
\end{rem}
\subsection{The structure of this paper}
Section \ref{iwasawatheory} contains  preliminaries for studying  the Iwasawa module ${\rm Br}(X_{\infty})[p^{\infty}]^{\lor}$.

The core of our main theorems is the proof of the control theorem (especially, Theorem \ref{geoarith}). To prove it, we introduce another Iwasawa module ${\rm Br}(\overline{X})^{G^{\infty}}[p^{\infty}]^{\lor}$ in Section \ref{briwasawa}. This module may be regarded as an intermediate object between ${\rm Br}(X_{\infty})[p^{\infty}]^{\lor}$ and $L_{{\rm tr}}(X, 1+T)$. We  show that the Iwasawa module ${\rm Br}(\overline{X})^{G^{\infty}}[p^{\infty}]^{\lor}$ is a finitely generated torsion $\Lambda$-module  by using the general theory developed in Section \ref{iwasawatheory} and a consequence of the Tate conjecture for K3 surfaces (Colliot-Th\`el\'ene and Skorobogatov \cite[16.1.4]{cs}).

 We prove the control theorem (Theorem \ref{K3conto}) in Section \ref{control theorem}. The most important tool to prove the control theorem is the Artin-Tate conjecture for K3 surfaces (Theorem \ref{artintate}). Thanks to this theorem, we can compare the orders of ${\rm Br}(X_{\infty})[p^{\infty}]^{\lor}$ and ${\rm Br}(\overline{X})^{G^{\infty}}[p^{\infty}]^{\lor}$ in terms of the polynomial $L_{{\rm tr}}(X, 1+T)$.

In Section \ref{iwaproof}, we give two independent proofs of the Iwasawa-type formula (Theorem \ref{iwasawatype}): one is derived from the Iwasawa invariants of ${\rm Br}(\overline{X})^{G^{\infty}}[p^{\infty}]^{\lor}$, and the other is obtained by using the Iwasawa invariants of  $L_{{\rm tr}}(X, 1+T)$ based on the fundamental evaluation formula (Theorem \ref{fef}). As an example, we derive a class number formula for Kummer surfaces associated with products of non-isogenous elliptic curves. We then give explicit computations of the Iwasawa invariants for specific elliptic curves.

In the final section, by computing the characteristic ideal of the $\Lambda$-module ${\rm Br}(\overline{X})^{G^{\infty}}[p^{\infty}]^{\lor}$, we prove  the Iwasawa main conjecture (Theorem \ref{imck3}).
 
We expect that the Iwasawa theory for K3 surfaces developed here will serve as a first step toward analogous theories over global function fields and number fields, as well as toward an Iwasawa theory for a broader class of algebraic surfaces over finite fields.

\textbf{Acknowledgements.} The first author would like to express his deepest gratitude to his supervisor, Sho Tanimoto. It is thanks to his continuous support that the author has been able to continue pursuing mathematics.

The authors are also grateful to Alexei Skorobogatov for his various helpful comments and information, including pointing out an inaccuracy in the citation for Theorem \ref{ls}, as well as for his encouragement.

The authors would also like to thank Taiga Adachi, Bryden Cais, Thomas Geisser, Takashi Hara, Takenori Kataoka, Iwao Kimura, Yuki Matsuoka, Kosuke Mizuno, Ryosuke Murooka, Asuka Shiga, Takashi Suzuki, and ChatGPT for their informative comments and encouragements.

The first author was supported by  JST FOREST program Grant number JPMJFR212Z. The second author was supported by JSPS KAKENHI, Grant Number 26KJ0138.
\section{Iwasawa Theory}\label{iwasawatheory}
In this section, we review the basic notions of Iwasawa theory.
\subsection{Fundamental Evaluation Formula in Iwasawa Theory}
In this subsection, we briefly summarize $p$-adic evaluation theory in Iwasawa theory for the reader's convenience. Our basic reference is Washington \cite[Section 7]{Washington}. In particular, we give a convenient formulation of a useful classical result, which we call \emph{the fundamental evaluation formula in Iwasawa theory}. Although this result is implicit in many standard references, explicit statements in the form needed here seem to be less common.

Let $p$ be a prime number. Let $K/\Q_p$ be a finite extension, and let $\mathcal{O}$ be its valuation ring. Let $\mathfrak{p}$ denote its maximal ideal, and let $\pi$ denote its uniformizer, i.e., a generator of the ideal $\mathfrak{p}$.  Let $\mathcal{O}\llbracket T\rrbracket$ be the power series ring with coefficients in $\mathcal{O}$. 

We fix an embedding $\bar{\Q}\hookrightarrow \bar{\Q}_p$. Let $v_p: \bar{\Q}_p\to \Q$ be the $p$-adic valuation normalized so that $v_p(p)=1$. Hence $v_p(\pi)=\frac{1}{e}$, where $e$ is the ramification index of $p$ in $\mathcal{O}$.

\begin{defi}[Washington \text{\cite[Section 7]{Washington}}]\label{polyiwa}
A polynomial $f(T)\in \mathcal{O}[T]$ is said to be \emph{distinguished} if $f(T)$ is monic and all non-leading coefficients lie in $\mathfrak{p}$.

Let $f(T)\in\mathcal{O}\llbracket T\rrbracket$. By the $p$-adic Weierstrass preparation theorem, there  exist  a unique non-negative integer $N\geq 0$, a unique distinguished polynomial $g(T)\in \mathcal{O}[T]$, and a unique unit $u(T)\in\mathcal{O}\llbracket T\rrbracket^*$ such that $f(T)=\pi^{N}g(T)u(T)$. Let $e$ be the ramification index of $K/\Q_p$. $\mu=\mu(f):=\frac{N}{e}$ is called \emph{the Iwasawa $\mu$-invariant of $f$}. $\lambda=\lambda(f):=\deg g$ is called \emph{the Iwasawa $\lambda$-invariant of $f$}.
\end{defi}
\begin{rem}
    The numbers $\mu(f)$ and $\lambda(f)$ are independent of the choice of the coefficient field $K/\mathbb{Q}_{p}$. In fact, let $K^{'}/\mathbb{Q}_{p}$ be another field and let $\mathcal{O}^{'}$ be the ring of integers. Suppose that $f\in \mathcal{O}^{'}[[T]]$. We may assume that $\mathcal{O}\subset\mathcal{O}^{'}$. Let $\pi^{'}$ be the uniformizer of the prime ideal $\mathfrak{p}^{'}$ of $\mathcal{O}^{'}$. Since there exists an integer $m\geq0$ such that $\mathfrak{p}=\mathfrak{p}^{'m}$, the ramification index of $p$ in $\mathcal{O}^{'}$ is $me$ and we have $f(T)=\pi^{'Nm}g(T)u^{'}(T)$ where $u^{'}(T)$ is a product of a unit in $\mathcal{O}^{'}$ and $u(T)$. In particular,  $\mu, \lambda$ are independent of the choice of the field $K^{'}$. 
\end{rem}
For each $n\geq 0$, let $W_n$ be the set of $p^n$-th roots of unity, and let $W=\bigcup_{n\geq 0} W_n$. Let $\varphi$ denote Euler's totient function.

\begin{thm}[Fundamental evaluation formula in Iwasawa theory]\label{fef}
Let $f(T)\in\mathcal{O}\llbracket T\rrbracket$.  Let $n_0$ be the smallest non-negative integer such that $e\cdot \lambda(f)<\varphi(p^{n_0})$. Assume that $f(\zeta-1)\neq 0$ for all $\zeta \in(W\setminus\{1\})$. Then there exists a unique $\nu\in\Q$ such that, for every $n\geq n_0$, we have $v_p(\prod_{\zeta\in(W_n\setminus\{1\})}f(\zeta-1))=\mu(f) p^n+\lambda(f)n+\nu$.
\end{thm}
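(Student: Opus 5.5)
Write $f(T)=\pi^{N}g(T)u(T)$ by the $p$-adic Weierstrass preparation theorem (Definition \ref{polyiwa}), and evaluate the valuation of each factor at $\zeta-1$ separately, summing over $\zeta\in W_n\setminus\{1\}$. Since $\#(W_n\setminus\{1\})=p^n-1$, the factor $\pi^N$ contributes $(p^n-1)N/e=\mu(f)p^n-\mu(f)$. For the unit $u(T)\in\mathcal{O}\llbracket T\rrbracket^*$, its constant term $u(0)$ lies in $\mathcal{O}^*$. Since $v_p(\zeta-1)>0$, the series $u(\zeta-1)$ converges in the ring of integers of $K(\zeta)$ and is congruent to $u(0)$ modulo the maximal ideal. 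Hence $v_p(u(\zeta-1))=0$, and the unit contributes nothing.

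The main step is the distinguished polynomial $g$, of degree $\lambda=\lambda(f)$. Factor $g(T)=\prod_{i=1}^{\lambda}(T-\alpha_i)$ over $\bar{\Q}_p$. Because $g$ is distinguished, the Newton polygon shows that every root satisfies $v_p(\alpha_i)>0$. More precisely, all non-leading coefficients have valuation at least $1/e$, so $v_p(\alpha_i)\ge 1/(e\lambda)$; I would first try to prove exactly this bound. For $\zeta$ a primitive $p^m$-th root of unity, $v_p(\zeta-1)=1/\varphi(p^m)$. If $\varphi(p^m)>e\lambda$, then $v_p(\zeta-1)<1/(e\lambda)\le v_p(\alpha_i)$, so the ultrametric inequality gives $v_p(\zeta-1-\alpha_i)=v_p(\zeta-1)$, and hence $v_p(g(\zeta-1))=\lambda/\varphi(p^m)$. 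Summing over the $\varphi(p^m)$ primitive $p^m$-th roots of unity gives exactly $\lambda$ for each level $m\ge n_0$, using the definition of $n_0$ as the least integer with $e\lambda<\varphi(p^{n_0})$. The levels $1\le m<n_0$ contribute a finite quantity $c$, which is finite because of the hypothesis $f(\zeta-1)\ne0$. This quantity does not depend on $n$.

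Assembling the pieces for $n\ge n_0$ gives
\[
v_p\Bigl(\prod_{\zeta\in W_n\setminus\{1\}}f(\zeta-1)\Bigr)=\mu(f)p^n-\mu(f)+c+\lambda(n-n_0+1),
\]
so $\nu:=c-\mu(f)-\lambda(n_0-1)\in\Q$ works. Uniqueness of $\nu$ is immediate, since $\nu$ equals the left side minus $\mu(f)p^n+\lambda(f)n$ for any single $n\ge n_0$. A small edge case is $n_0=0$, which occurs exactly when $\lambda=0$, because $\varphi(1)=1$. Then $g=1$ and the formula holds trivially with $\nu=-\mu(f)$.

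The main obstacle is getting the root bound $v_p(\alpha_i)\ge1/(e\lambda)$ and the level threshold exactly right, so that the stated $n_0$, and not a larger one, suffices. The key point to verify is that the Newton polygon argument gives a strict inequality between $v_p(\zeta-1)$ and $v_p(\alpha_i)$ precisely when $\varphi(p^m)>e\lambda$.
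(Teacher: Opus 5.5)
Your proof is correct and is essentially the paper's argument: Weierstrass preparation, the factor $\pi^N$ contributing $\mu(f)(p^n-1)$, and the unit contributing nothing. Each level $m\ge n_0$ of the distinguished polynomial then contributes exactly $\lambda$, and the lower levels are absorbed into $\nu$. The only difference is that you compare $v_p(\zeta-1)$ with the valuations of the roots of $g$, whereas the paper applies the dominance argument directly to $g(\zeta-1)$ under the same threshold $\lambda/\varphi(p^m)<1/e$; your bound $v_p(\alpha_i)\ge 1/(e\lambda)$ does hold, since otherwise $\alpha_i^{\lambda}$ would strictly dominate the other terms of $g(\alpha_i)$.
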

\begin{proof}
By the Weierstrass preparation theorem, there exist a unique non-negative integer $N\geq 0$, a distinguished polynomial $g(T)\in \mathcal{O}[T]$, and a unit $u(T)\in\mathcal{O}\llbracket T\rrbracket^*$ such that $f(T)=\pi^{N}g(T)u(T)$. 

Since $u(T)$ is a unit, $u(T)$ can be written in the form $u(T)=a_0+Tq(T)$, where $a_0\in \mathcal{O}^*$ and $q(T)\in \mathcal{O}\llbracket T\rrbracket$. Since $v_p(a_0)=0$ and $v_p(\zeta-1)>0$, the non-archimedean property implies that $v_p(u(\zeta-1))=0$.

Now we evaluate the $g(T)$-part. Since $g(T)$ is distinguished, $g(T)$ can be written in the form
\[
g(T)=T^{\lambda}+a_{\lambda-1}T^{\lambda-1}+\cdots+a_1T+a_0
\]
satisfying $v_p(a_i)\geq \frac{1}{e}$. Let $\zeta$ be a primitive $p^n$-th root of unity. By \cite[III-Lemma 3]{cassels}, we obtain
\[
v_p(\zeta-1)=\frac{1}{\varphi(p^n)}=\frac{1}{p^{n-1}(p-1)},
\]
thus we have $v_p((\zeta-1)^{\lambda})=\frac{\lambda}{\varphi(p^n)}$. Hence, for every $n\geq n_0$ we have $v_p((\zeta-1)^{\lambda})=\frac{\lambda}{\varphi(p^n)}<\frac{1}{e}$. Thus, for every $0\leq i<\lambda$, we have
\begin{align*}
v_p(a_i(\zeta-1)^i)&=  v_p(a_i)+v_p((\zeta-1)^i) \geq   \frac{1}{e}+\frac{i}{\varphi(p^n)} \geq \frac{1}{e} >\frac{\lambda}{\varphi(p^n)}=v_p((\zeta-1)^{\lambda}).
\end{align*}
Since the valuation $v_p$ is non-archimedean, we have
\[
v_p(g(\zeta-1))=v_p((\zeta-1)^{\lambda}+a_{\lambda -1}(\zeta -1)^{\lambda -1}+\cdots+a_1(\zeta-1)+a_0)=v_p((\zeta-1)^{\lambda})=\frac{\lambda}{\varphi(p^n)}.
\]
Hence $v_p(\prod_{\substack{\zeta^{p^n}=1\\ \zeta^{p^{n-1}}\neq 1}} g(\zeta-1))=\varphi(p^n)\cdot\frac{\lambda}{\varphi(p^n)}=\lambda$.
Thus we obtain
\begin{align*}
v_p(\prod_{\zeta\in (W_n\setminus\{1\})} g(\zeta-1))&= \sum_{k=1}^n v_p(\prod_{\substack{\zeta^{p^k}=1 \\ \zeta^{p^{k-1}}\neq 1}} g(\zeta -1))\\&= \sum_{k=1}^{n_0 -1} v_p(\prod_ {\substack{\zeta^{p^k}=1 \\ \zeta^{p^{k-1}}\neq 1}} g(\zeta -1))+\sum_{k=n_0}^n v_p(\prod_{\substack{\zeta^{p^k}=1 \\ \zeta^{p^{k-1}}\neq 1}} g(\zeta-1))
=\lambda n+\nu_g,
\end{align*}
where $\nu_g= \sum_{k=1}^{n_0 -1} v_p(\prod_ {\substack{\zeta^{p^k}=1 \\ \zeta^{p^{k-1}}\neq 1}} g(\zeta -1))-\lambda(n_0-1)$. 

Consequently, we obtain
\begin{align*}
v_p(\prod_{\zeta\in (W_n\setminus\{1\})}f(\zeta-1))&=\sum_{\zeta\in (W_n\setminus\{1\})} v_p(\pi^N)+\sum_{\zeta\in (W_n\setminus\{1\})} v_p(g(\zeta-1))+\sum_{\zeta\in (W_n\setminus\{1\})} v_p(u(\zeta -1))\\ &=\mu (p^n-1)+\lambda n +\nu_g=\mu p^n+\lambda n +\nu,
\end{align*}
where $\nu=\nu_g-\mu$. This completes the proof.
\end{proof}
\subsection{Iwasawa Modules} In this subsection, we discuss module-theoretic aspects of Iwasawa theory. We define some module-theoretic invariants. First, we introduce the structure theorem of finitely generated torsion modules  over an integrally closed noetherian domain. 
\begin{thm}[General Structure Theorem]\label{genstr} Let $R$ be an integrally closed noetherian domain. 
    Let $M$ be a finitely generated torsion $R$-module. Then there exist unique prime ideals $\mathfrak{p}_{1}, \cdots, \mathfrak{p}_{u}$ of $R$ of height $1$ and unique non-negative integers $q_{1}, \cdots, q_{u}$, up to permutation, such that the $R$-module $M$ is pseudo-isomorphic to $\bigoplus^{u}_{i=1}R/\mathfrak{p_{i}}^{q_{i}}$. 
\end{thm}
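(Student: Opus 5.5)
The plan is the standard route through localization at height-one primes, reducing to the structure of modules over discrete valuation rings. First, I would record the facts about $R$ that make this work. Since $R$ is an integrally closed noetherian domain, it is a Krull domain. Hence for every height one prime $\mathfrak{p}$ the localization $R_{\mathfrak{p}}$ is a discrete valuation ring. Moreover, a finitely generated torsion module $M$ has only finitely many height one primes in its support: $\mathrm{Ann}_R(M)\neq 0$, and a nonzero element of a Krull domain lies in only finitely many height one primes. Call these primes $\mathfrak{p}_1,\dots,\mathfrak{p}_s$.

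Next I would write down the local structure and build the global comparison map. For each $i$, the module $M_{\mathfrak{p}_i}$ is a finitely generated torsion module over the DVR $R_{\mathfrak{p}_i}$. It therefore decomposes uniquely as $\bigoplus_j R_{\mathfrak{p}_i}/\mathfrak{p}_i^{q_{ij}}R_{\mathfrak{p}_i}$, by the structure theorem over a PID. The candidate model is $E=\bigoplus_{i,j}R/\mathfrak{p}_i^{(q_{ij})}$. I would use symbolic powers, since in a general Krull domain the ordinary power $\mathfrak{p}^q$ may differ from $\mathfrak{p}^{(q)}$ in codimension $\ge 2$; the two agree after localizing at $\mathfrak{p}$, which is all that pseudo-isomorphism sees. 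If one insists on ordinary powers, $R/\mathfrak{p}^q\to R/\mathfrak{p}^{(q)}$ is itself a pseudo-isomorphism, so the statement as written is fine up to this remark.

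To construct a map $M\to E$ with pseudo-null kernel and cokernel, I would proceed as follows.
\begin{itemize}
\item[(a)] Choose elements $m_{ij}\in M$ whose images in $M_{\mathfrak{p}_i}$ generate the cyclic summands. Using prime avoidance and the finiteness of $\{\mathfrak{p}_i\}$, the generators can be chosen in $M$ after clearing denominators by elements outside $\bigcup_i\mathfrak{p}_i$.
\item[(b)] Let $M_0=M/(\text{largest pseudo-null submodule})$. For a torsion module, $M_0$ embeds into $\bigoplus_i M_{\mathfrak{p}_i}$, because an element vanishing at every height one prime of the support is killed by an ideal of height $\ge2$.
\item[(c)] Compare $M_0$ with $E$ inside $\bigoplus_i M_{\mathfrak{p}_i}\cong\bigoplus_i E_{\mathfrak{p}_i}$. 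Both are lattices whose localizations at every height one prime agree: the $\mathfrak{p}_i$ by construction, and all other primes because both localizations vanish there.
\end{itemize}

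Given (c), the main obstacle is upgrading "isomorphic at every height one prime" to "pseudo-isomorphic". I would handle it as in Bourbaki, \emph{Commutative Algebra} VII §4.4, Theorem 5. Choose $s\in R$ lying outside every $\mathfrak{p}_i$ such that $sE\subset M_0$ and $sM_0\subset E$ inside the common localization. The map $M_0\to E$ is then given by an explicit homomorphism, and its kernel and cokernel have support avoiding every height one prime. Hence both are pseudo-null.

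Uniqueness comes from localization. If $M\sim\bigoplus R/\mathfrak{p}_i^{q_i}$, then localizing at a height one prime $\mathfrak{p}$ kills the pseudo-null kernel and cokernel and gives $M_{\mathfrak{p}}\cong\bigoplus_{\mathfrak{p}_i=\mathfrak{p}}R_{\mathfrak{p}}/\mathfrak{p}^{q_i}R_{\mathfrak{p}}$. The uniqueness of elementary divisors over the DVR $R_{\mathfrak{p}}$ then determines which primes occur, with what multiplicities, and the exponents $q_i$ up to permutation. One must restrict to $q_i\ge1$, since summands with $q_i=0$ are zero and would break uniqueness.
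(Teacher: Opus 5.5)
Your argument is correct and is the standard localization proof (Bourbaki, \emph{Commutative Algebra} VII \S 4.4); the paper does not reproduce it but simply cites \cite[Theorem 2.36]{o1}, and your observations that uniqueness needs $q_i\geq 1$ and that symbolic powers are the natural model over a general Krull domain are both apt. One small precision: $R/\mathfrak{p}^{q}\to R/\mathfrak{p}^{(q)}$ points the wrong way to compose with your $M\to\bigoplus R/\mathfrak{p}_i^{(q_i)}$, but multiplication by any $s\notin\mathfrak{p}$ annihilating $\mathfrak{p}^{(q)}/\mathfrak{p}^{q}$ gives an injective pseudo-isomorphism $R/\mathfrak{p}^{(q)}\to R/\mathfrak{p}^{q}$, so the ordinary-power version follows directly.
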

\begin{proof}
    For example, see \cite[Theorem 2.36]{o1}.
\end{proof}
For an integrally closed noetherian domain $R$, let $P^{1}(R)$ be the set of prime ideals of $R$ of height 1. Since $R$ is integrally closed, for every $\mathfrak{p}\in P^{1}(R)$, the localization $R_{\mathfrak{p}}$ is a DVR. For $\mathfrak{p}\in P^{1}(R)$ and a finitely generated torsion $R$-module $M$, let $l(\mathfrak{p}, M)$ be the length of the $R_{\mathfrak{p}}$-module $M_{\mathfrak{p}}$. 
\begin{defi}[Characteristic ideals (\text{Ochiai \cite[Definition 2.38]{o1}})]\label{charideal}
    Let $R$, $M$, and $\mathfrak{p}_{i}$ be as in Theorem \ref{genstr}. Then the \emph{characteristic ideal} of $M$ is an ideal in $R$ defined as 
    \begin{equation}
        {\rm Char}_{R}(M):=\{x\in R~|~{\rm ord}_{\mathfrak{p}}(x)\geq l(\mathfrak{p}, M), \forall \mathfrak{p}\in P^{1}(R)\},
    \end{equation}
    where ${\rm ord}_{\mathfrak{p}}(-)$ is the normalized discrete valuation of $R_{\mathfrak{p}}$. 
\end{defi}
When $R=\mathbb{Z}_{p}$, by the structure theorem for finitely generated modules over a PID, the finitely generated torsion $\mathbb{Z}_{p}$-module is of the form $\mathbb{Z}_{p}/p^{m}\mathbb{Z}_{p}$ for a unique  $m\geq0$. Hence we have
\begin{equation}\label{simpchar}
    {\rm Char}_{\mathbb{Z}_{p}}M=(p^{m}).
\end{equation}

Now, let $q$ be a power of a prime number, and assume that  $p$ does not divide $q$. Let $\Gamma:={\rm Gal}(\mathbb{F}_{q^{p^{\infty}}}/\mathbb{F}_{q})$ be the Galois group.  The \emph{Iwasawa algebra} is the following complete group ring (topological inverse limit of group rings):
\begin{align}
\Lambda:=\mathbb{Z}_{p}[[\Gamma]]:=\varprojlim_{n}\mathbb{Z}_{p}[\Gamma_{n}],
\end{align}
where $\Gamma_{n}:={\rm Gal}(\mathbb{F}_{q^{p^{n}}}/\mathbb{F}_{q})$. It is known that the Iwasawa algebra is a compact $\mathbb{Z}_{p}$-module. 

On the other hand, let  $\mathbb{Z}_{p}[[T]]$ be the power series ring in one variable over $\mathbb{Z}_{p}$. By \cite[Proposition 13.9]{Washington} and \cite[Lemma 13.11]{Washington}, the ring $\mathbb{Z}_{p}[[T]]$ is a noetherian local ring with the maximal ideal $(p, T)$. Moreover, the ring $\mathbb{Z}_{p}[[T]]$ is complete with respect to the $(p, T)$-adic topology (see \cite[23.J]{matsumura}).  
\begin{prop}\label{idlambda} Let $\gamma\in \Gamma$ be a topological generator. Then  the map 
\begin{align} \Lambda&\to \mathbb{Z}_{p}[[T]],
\\  \gamma&\mapsto  1+T,\end{align}
is an isomorphism of topological $\mathbb{Z}_{p}$-algebras. 
\end{prop}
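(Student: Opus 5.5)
The plan is to construct the map by reducing to finite levels and then pass to the inverse limit. Write $\Lambda=\varprojlim_n \mathbb{Z}_p[\Gamma_n]$. Since $\gamma$ is a topological generator of $\Gamma\cong\mathbb{Z}_p$, its image $\gamma_n$ in $\Gamma_n\cong\mathbb{Z}/p^n\mathbb{Z}$ generates $\Gamma_n$. Hence $\mathbb{Z}_p[X]/(X^{p^n}-1)\cong \mathbb{Z}_p[\Gamma_n]$ via $X\mapsto\gamma_n$. Substituting $X=1+T$ gives
\[
\mathbb{Z}_p[\Gamma_n]\cong \mathbb{Z}_p[T]/(\omega_n(T)),\qquad \omega_n(T):=(1+T)^{p^n}-1 .
\]
The polynomial $\omega_n$ is distinguished of degree $p^n$, because all binomial coefficients $\binom{p^n}{k}$ with $0<k<p^n$ are divisible by $p$.

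By the division algorithm for distinguished polynomials, which comes with the Weierstrass preparation theorem already used in Definition \ref{polyiwa}, every element of $\mathbb{Z}_p[[T]]$ is uniquely congruent modulo $\omega_n$ to a polynomial of degree $<p^n$. This yields
\[
\mathbb{Z}_p[[T]]/(\omega_n)\cong \mathbb{Z}_p[T]/(\omega_n).
\]
These isomorphisms are compatible with the transition maps $\Gamma_{n+1}\to\Gamma_n$, since $\omega_n\mid\omega_{n+1}$. Passing to the limit gives a $\mathbb{Z}_p$-algebra isomorphism
\[
\Lambda\cong\varprojlim_n \mathbb{Z}_p[[T]]/(\omega_n(T)),
\]
under which $\gamma$ corresponds to $1+T$.

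It remains to show that the natural map $\mathbb{Z}_p[[T]]\to\varprojlim_n \mathbb{Z}_p[[T]]/(\omega_n)$ is an isomorphism, and that it is a homeomorphism. The key estimate is $\omega_n\in (p,T)^{n+1}$. This holds by induction, using
\[
\omega_{n+1}=\omega_n\cdot\bigl((1+T)^{p^n(p-1)}+\cdots+1\bigr),
\]
where the second factor lies in $(p,T)$ because it is congruent to $p$ modulo $T$. Consequently the ideals $(\omega_n)$ and $(p,T)^n$ define the same topology: one direction is the estimate just proved. For the other direction, $(p,T)^N\subset(\omega_n)+p^{n}\cdots$ fails as stated, so I would instead use that $\mathbb{Z}_p[[T]]/(\omega_n)$ is a finite free $\mathbb{Z}_p$-module. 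Its topology is the $p$-adic one, and the $(p,T)$-adic and compact topologies agree on it. I then argue via compactness as follows.

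Injectivity holds because $\bigcap_n(\omega_n)\subset\bigcap_n (p,T)^{n+1}=0$ by Krull's intersection theorem in the noetherian local ring $\mathbb{Z}_p[[T]]$.

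Surjectivity: a compatible system $(f_n)$ has lifts given by the unique reduced polynomials $r_n$ of degree $<p^n$. Then $r_{m}-r_n\in(\omega_n)\subset(p,T)^{n+1}$ for $m\geq n$, so $(r_n)$ is Cauchy in the complete ring $\mathbb{Z}_p[[T]]$. Its limit maps to $(f_n)$.

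Finally, the map is a continuous bijection between compact Hausdorff spaces, hence a homeomorphism. Here $\mathbb{Z}_p[[T]]$ is compact as $\prod\mathbb{Z}_p$, and the limit is compact as a limit of finite free $\mathbb{Z}_p$-modules. Continuity follows from $(p,T)^{M}\subset(p^k,\omega_n)$ for $M$ large, which holds because $T^{p^n}\equiv\omega_n \pmod p$.

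The main obstacle I expect is this topological comparison between the $(\omega_n)$-adic and $(p,T)$-adic topologies. I would handle it by the inclusion $\omega_n\in(p,T)^{n+1}$ together with the compactness argument, rather than trying to prove an exact equality of the filtrations.
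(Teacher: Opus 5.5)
Your argument is correct, and it is the standard proof behind the reference the paper cites for this statement (\cite[Proposition 5.3.5]{nsw}; see also \cite[Theorem 7.1]{Washington}): identify $\mathbb{Z}_p[\Gamma_n]$ with $\mathbb{Z}_p[[T]]/(\omega_n)$ by Weierstrass division, use $\omega_n\in(p,T)^{n+1}$ to get injectivity and the Cauchy property, and use compactness to control the topology. One correction: your sentence asserting that the ideals $(\omega_n)$ and $(p,T)^n$ define the same topology is false, since the $(\omega_n)$-filtration is strictly finer ($p^N\notin(\omega_n)$ for every $N$); the right comparison is with the ideals $(p^k,\omega_n)$, which is exactly your estimate $(p,T)^M\subset(p^k,\omega_n)$, and that estimate, through the closedness of $(\omega_n)$, is also what justifies your claim that the limit of the $r_n$ maps to $(f_n)$.
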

\begin{proof}
    See \cite[Proposition 5.3.5]{nsw}.
\end{proof}
\begin{defi} A compact $\Lambda$-module is called an \emph{Iwasawa module}.  
\end{defi}
The following lemma will be used to construct the Iwasawa modules ${\rm Br}(X_{\infty})[p^{\infty}]^{\lor}$ and ${\rm Br}(\overline{X})^{G^{\infty}}[p^{\infty}]^{\lor}$ in Section \ref{briwasawa}.
\begin{lem}\label{natiwasawa}
 If   a compact $\mathbb{Z}_{p}$-module $M$ has a continuous action of $\Gamma$, then $M$ has a natural structure of an Iwasawa module. 
\end{lem}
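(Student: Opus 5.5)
The plan is to show that the $\Gamma$-action on $M$ extends uniquely and continuously to the completed group ring $\Lambda=\varprojlim_n \mathbb{Z}_p[\Gamma_n]$. The action of the abstract group ring $\mathbb{Z}_p[\Gamma]$ is automatic: $M$ is a $\mathbb{Z}_p$-module and $\Gamma$ acts by $\mathbb{Z}_p$-linear maps, since a continuous additive map commutes with $\mathbb{Z}$-multiples and hence, by continuity, with $\mathbb{Z}_p$-multiples. So the whole issue is passing to the completion $\Lambda$.

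The first step is to write $M$ as a projective limit of finite quotients that are stable under $\Gamma$. A compact $\mathbb{Z}_p$-module is profinite, so the open submodules form a neighbourhood basis of $0$. For an open submodule $U$, the map $\Gamma\times M\to M$ is continuous and $\Gamma$ is compact. Hence the stabilizer of $U$ is open in $\Gamma$, so $U$ has only finitely many $\Gamma$-translates. The intersection $U'=\bigcap_{\sigma}\sigma U$ is then an open $\Gamma$-stable submodule, and
\[
M\cong \varprojlim_{U'} M/U'
\]
over such submodules.

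Next we look at each finite quotient $M/U'$. It is a finite $p$-group, killed by some $p^{m}$, and $\Gamma$ acts on it through a finite quotient $\Gamma_n$ for some $n$, since the kernel of the action is open. Therefore $M/U'$ is a $\mathbb{Z}_p[\Gamma_n]$-module, and via the projection $\Lambda\to\mathbb{Z}_p[\Gamma_n]$ it becomes a $\Lambda$-module. These structures are compatible with the transition maps: enlarging $n$ changes nothing, because the projections $\mathbb{Z}_p[\Gamma_{n'}]\to\mathbb{Z}_p[\Gamma_n]$ are compatible. Taking the projective limit then gives a $\Lambda$-module structure on $M$. This action is continuous for the product topology and restricts to the given action of $\Gamma\subset\Lambda$.

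Uniqueness, which is what makes the structure \emph{natural}, follows from density. The image of $\mathbb{Z}_p[\Gamma]$ is dense in $\Lambda$, so any continuous $\Lambda$-action extending the $\Gamma$-action must coincide with the one constructed. The same density argument shows that continuous $\Gamma$-equivariant homomorphisms are automatically $\Lambda$-linear.

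The main obstacle is the first step: producing a cofinal system of open submodules that are stable under $\Gamma$. Once that is in place, the rest is formal bookkeeping with projective limits.
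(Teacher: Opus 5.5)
Your argument is correct. The paper does not prove this lemma itself; it only cites Ochiai \cite[Lemma 2.23]{o1}. What you have written is a self-contained version of the standard argument that the citation stands in for. You write $M\cong\varprojlim M/U'$ over open $\Gamma$-stable submodules. You note that each finite quotient is a $\mathbb{Z}_{p}[\Gamma_{n}]$-module for suitable $n$, compatibly in the system, and pass to the limit. Uniqueness then follows from the density of $\mathbb{Z}_{p}[\Gamma]$ in $\Lambda$.

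Compared with the bare citation, your route makes the word ``natural'' precise. It gives uniqueness of the extension and shows that continuous $\Gamma$-equivariant maps are automatically $\Lambda$-linear. The paper uses this implicitly, for instance in Corollary \ref{tormod}, when it treats the dual of a $\Gamma$-homomorphism as a $\Lambda$-homomorphism. Your route also avoids choosing a topological generator. The alternative is to work in $\mathbb{Z}_{p}[[T]]$ and check that $\gamma-1$ acts topologically nilpotently, so that power series in $T$ converge on $M$.

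Two small points of justification, neither affecting correctness:
\begin{itemize}
\item The openness of the stabilizer of $U$ really comes from compactness of $M$ (or of $U$). A tube-lemma argument gives an open subgroup of $\Gamma$ acting trivially on $M/U$. Compactness of $\Gamma$ is what then gives finitely many translates.
\item The claim that a compact $\mathbb{Z}_{p}$-module is profinite uses continuity of scalar multiplication. Together with compactness, that continuity forces $p^{k}M$ into any given neighbourhood of $0$ for large $k$.
\end{itemize}
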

\begin{proof}
    See \cite[Lemma 2.23]{o1}.
\end{proof}

\begin{thm}[Structure theorem of Iwasawa modules]\label{iwstr} If $M$ is a finitely generated torsion $\mathbb{Z}_{p}[[T]]$-module, then there exists an exact sequence of $\mathbb{Z}_{p}[[T]]$-modules:
    \begin{equation}
        0\to Z\to M\to \bigoplus^{s}_{i=1}\mathbb{Z}_{p}[[T]]/(p^{m_{i}})\oplus\bigoplus^{t}_{j=1}\mathbb{Z}_{p}[[T]]/(F_{j}(T)^{n_{j}})\to Z^{'}\to 0,
    \end{equation}
    where $s, t$ are non-negative integers, $Z, Z^{'}$ are finite abelian groups, and $F_{j}(T)$ are non-unit irreducible distinguished polynomials. 
\end{thm}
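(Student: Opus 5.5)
The approach is to deduce Theorem \ref{iwstr} from the General Structure Theorem (Theorem \ref{genstr}) applied to $R=\mathbb{Z}_{p}[[T]]$, and then to unwind what a pseudo-isomorphism means in this two-dimensional local setting.

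\textbf{Step 1: $R$ satisfies the hypotheses.} The ring $R=\mathbb{Z}_{p}[[T]]$ is a noetherian local domain with maximal ideal $(p,T)$, as recalled above. It is moreover a UFD, hence integrally closed; this follows from the $p$-adic Weierstrass preparation theorem (cf.\ Definition \ref{polyiwa}), as in \cite[Section 7, 13]{Washington}. So Theorem \ref{genstr} applies, and $M$ is pseudo-isomorphic to $E=\bigoplus_{i} R/\mathfrak{p}_{i}^{q_{i}}$ with each $\mathfrak{p}_{i}$ of height one.

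\textbf{Step 2: the height one primes.} Since $R$ is a UFD, every height one prime is principal, $\mathfrak{p}=(f)$ with $f$ irreducible. Write $f=\pi^{N}g u$ by Weierstrass preparation. Irreducibility forces one of two cases:
\begin{itemize}
\item $f$ is an associate of $p$ (that is, $N=1$ and $g=1$);
\item $N=0$ and $f$ is an associate of an irreducible distinguished polynomial $F$ with $\deg F\ge 1$. Irreducibility of $F$ in $\mathbb{Z}_{p}[T]$ transfers to $R$ because the unit is harmless.
\end{itemize}
Hence $\mathfrak{p}_{i}^{q_{i}}$ is either $(p^{m_{i}})$ or $(F_{j}^{n_{j}})$. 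This gives exactly the shape of the elementary module in the statement.

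\textbf{Step 3: pseudo-isomorphism gives the exact sequence.} By definition, a pseudo-isomorphism $\phi\colon M\to E$ is a homomorphism whose kernel $Z$ and cokernel $Z'$ are pseudo-null, i.e.\ their localizations at all height one primes vanish. Taking $Z=\Ker\phi$ and $Z'=\Coker\phi$ yields the four-term exact sequence immediately.

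It remains to show that a finitely generated pseudo-null $R$-module is finite. The annihilator of such a module is contained in no height one prime, so its support consists only of the maximal ideal $(p,T)$, the only prime of height $2$ in this two-dimensional ring. A finitely generated module supported only at the maximal ideal has finite length. Because the residue field is $\mathbb{F}_{p}$, finite length means finite cardinality. Concretely, the annihilator contains two coprime elements, and one can reduce to the case where it contains some $p^{k}$ together with a distinguished polynomial $h$. Then the module is a quotient of copies of $R/(p^{k},h)\cong (\mathbb{Z}/p^{k})[T]/(h)$, which is finite.

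\textbf{Expected obstacle.} The main care goes into Step 3, and in particular into the direction of the pseudo-isomorphism supplied by Theorem \ref{genstr} as cited from \cite{o1}. We need a map $M\to E$, not $E\to M$. Over $\mathbb{Z}_{p}[[T]]$ a map $M\to E$ exists in any case, as in the standard proof in \cite[Theorem 13.12]{Washington}. If the citation only provides an abstract equivalence, we would instead invoke Washington's direct construction.
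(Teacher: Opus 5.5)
Your proposal is correct, but it does not follow the paper: the paper gives no argument and simply cites \cite[Theorem 2.39]{o1}.

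You instead derive the statement from the general Theorem \ref{genstr} applied to $R=\mathbb{Z}_{p}[[T]]$, in two steps:
\begin{enumerate}
\item The height-one primes of this UFD are exactly $(p)$ and $(F)$ with $F$ an irreducible distinguished polynomial.
\item A finitely generated pseudo-null module is finite, because its support lies in $\{(p,T)\}$ and the residue field is $\mathbb{F}_{p}$.
\end{enumerate}

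Your route buys a conceptual explanation. It makes explicit the identifications the paper uses implicitly right after the theorem, when it deduces uniqueness of $s,t,m_{i},n_{j},(F_{j})$ from Theorem \ref{genstr}. It also carries over verbatim to $\mathcal{O}[[T]]$ with $\pi$ in place of $p$.

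The standard direct proof over $\mathbb{Z}_{p}[[T]]$ (as in \cite{Washington}) works instead by elementary manipulations with Weierstrass preparation and the finiteness of $\mathbb{Z}_{p}[[T]]/(f,g)$ for coprime $f,g$. It is self-contained, avoids the theory of integrally closed noetherian domains, and constructs the map $M\to E$ in the required direction. Your worry about the direction of the pseudo-isomorphism is harmless anyway, since pseudo-isomorphism is an equivalence relation on finitely generated torsion $\mathbb{Z}_{p}[[T]]$-modules.

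One small slip in Step 2: you state that irreducibility passes from $\mathbb{Z}_{p}[T]$ to $R$, but you need the reverse. The fact required is that an irreducible $f\in R$ with $N=0$ is an associate of a distinguished polynomial that is irreducible in $R$. This is immediate from $f=gu$, so it does not affect your argument.
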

\begin{proof}
    See \cite[Theorem 2.39]{o1}.
\end{proof}
By Theorem \ref{genstr}, $s, t, m_{i}, n_{j}$, and prime ideals $(F_{j}(T))$ are unique, up to permutation. Put ${\rm char}(M):= \prod^{s}_{i=1}p^{m_{i}}\prod^{t}_{j=1}F_{j}(T)^{n_{j}}. $
Then we have  
\begin{equation}\label{chardef}
    {\rm Char}_{\mathbb{Z}_{p}[[T]]}(M)=\left({\rm char}(M)\right).
\end{equation}
The polynomial ${\rm char}(M)$ is called \emph{the characteristic polynomial} of $M$.
\begin{defi}(Iwasawa invariants)\label{moduinv} Let $M$, $s$, $t$, $m_{i}$, $n_{j}$, and $(F_{j}(T))$ be as in Theorem \ref{iwstr}. For the $\mathbb{Z}_{p}[[T]]$-module $M$, we define the \emph{$\mu$-invariant} $\mu(M)$ and \emph{$\lambda$-invariant $\lambda(M)$} by  the non-negative integers $\mu({\rm char}(M))$ and  $\lambda({\rm char}(M))$ respectively. 
    \end{defi}
    From the expression of the characteristic ideal \eqref{chardef}, we obtain $\mu(M)=\sum^{s}_{i=1}m_{i}$, $\lambda(M)=\sum^{t}_{j=1}{\rm deg}(F_{j}(T))n_{j}$.
\begin{thm}\label{moduiwasawa} Fix a topological generator $\gamma\in \Gamma$. 
    Let $M$ be a finitely generated torsion $\Lambda$-module. Suppose that for every integer $n\geq0$, we have $\# (M/(\gamma^{p^{n}}-1)M)<\infty$. Then there exists a unique $\nu\in\mathbb{Z}$ such that,  for every sufficiently large $n\geq 0$, we have 
    \begin{equation}
        \# (M/(\gamma^{p^{n}}-1)M)=p^{\mu(M)p^n+\lambda(M)n+\nu}.
    \end{equation}
\end{thm}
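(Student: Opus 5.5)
Under the isomorphism $\Lambda\cong\mathbb{Z}_p[[T]]$ of Proposition \ref{idlambda}, the element $\gamma^{p^n}-1$ becomes $\omega_n(T):=(1+T)^{p^n}-1=\prod_{\zeta\in W_n}(1+T-\zeta)$. The plan is to reduce to an elementary module $E$ via Theorem \ref{iwstr}, compute $\#(E/\omega_nE)$ with the fundamental evaluation formula (Theorem \ref{fef}), and then control the error coming from the finite groups $Z,Z'$.

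\emph{Step 1: elementary modules.} Let $E=\bigoplus_i\Lambda/(p^{m_i})\oplus\bigoplus_j\Lambda/(F_j^{n_j})$. For a single summand $\Lambda/(f)$ with $f$ coprime to $\omega_n$, the quotient $\Lambda/(f,\omega_n)$ is finite. If $f$ is a distinguished polynomial, $\Lambda/(f)$ is free over $\mathbb{Z}_p$ of rank $\deg f$, and multiplication by $\omega_n$ has determinant the resultant. This gives $\#\Lambda/(f,\omega_n)=p^{v_p(\prod_{\zeta\in W_n}f(\zeta-1))}$. If instead $f=p^m$, then $\Lambda/(p^m,\omega_n)=(\mathbb{Z}/p^m)[T]/(\omega_n)$ has order $p^{mp^n}$.

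\emph{Step 2: finiteness hypothesis.} The hypothesis that $M/\omega_nM$ is finite for all $n$ forces no $F_j$ to divide any $\omega_n$, so $F_j(\zeta-1)\neq0$ for all $\zeta\in W$. I would check this via the exact sequence: if some $F_j$ divided $\omega_n$, then $E/\omega_nE$ would be infinite, and since $Z'$ is finite, $M/\omega_nM$ would be infinite as well. Given this, Theorem \ref{fef} applied to $\prod_j F_j^{n_j}$ gives exponent $\lambda(M)n+\nu'$ for large $n$. The factor $\zeta=1$ is included here but contributes a constant $v_p(F(0))$, which I absorb into $\nu'$. Together with the $\mu$-part $\sum_i m_ip^n=\mu(M)p^n$, this yields $\#E/\omega_nE=p^{\mu(M)p^n+\lambda(M)n+\nu'}$.

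\emph{Step 3: comparing $M$ and $E$ (main obstacle).} Here I must show that $\#(M/\omega_nM)/\#(E/\omega_nE)$ is eventually constant, not merely bounded. Split $0\to Z\to M\to M'\to0$ and $0\to M'\to E\to Z'\to0$, and apply the snake lemma for multiplication by $\omega_n$. For a finite $\Lambda$-module $Z$ we have $\#Z[\omega_n]=\#Z/\omega_nZ$, and for $n$ large $\omega_n$ acts as $0$ on $Z$, because $\omega_n\in(p,T)^{n}$ eventually kills any finite module. So all the finite correction terms stabilize. I also need $E[\omega_n]$, which is finite (indeed zero, since $E$ has no finite submodule) because $\omega_n$ is coprime to the $F_j$ and to $p$. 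The same holds for $M'$, which injects into $E$.

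Chasing the two sequences then gives
\[
\#M/\omega_nM=\#E/\omega_nE\cdot \frac{\#Z/\omega_nZ\cdot\#Z'[\omega_n]}{\#Z'/\omega_nZ'\cdot \#(\text{image terms})},
\]
with all correction factors eventually constant. I would prove this constancy carefully: once $\omega_n$ kills $Z$ and $Z'$, the connecting maps still vary with $n$, so the key point to check is that their kernels and images eventually stabilize. For this I would use that $\omega_{n+1}=\omega_n\cdot\nu_n$ with $\nu_n\equiv p^{\,\cdot}\cdot$ unit-ish acting invertibly or nilpotently, following the argument in Washington's proof of Iwasawa's formula.

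Uniqueness of $\nu$ is immediate.
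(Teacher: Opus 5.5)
Your argument is correct, and it takes a different route from the paper, which gives no proof of its own and only cites \cite[Theorem 2.45]{o1}. Steps 1 and 2 are fine as written:
\begin{itemize}
\item $\#\Lambda/(f,\omega_n)$ is computed as a resultant;
\item the finiteness hypothesis, combined with the finiteness of $Z'$, rules out $F_j\mid\omega_n$;
\item Theorem \ref{fef} then applies, with the $\zeta=1$ factor contributing the constant $v_p(F(0))$.
\end{itemize}

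The one place you stop short is Step 3, and the obstacle you anticipate there does not actually exist. You can drop the appeal to $\omega_{n+1}/\omega_n$ acting as $p$ times a unit: that device is for computing elementary pieces directly, which you already did. You observed $E[\omega_n]=0$, hence $M'[\omega_n]=0$. Feed this into your two snake sequences:
\begin{itemize}
\item The first collapses to $0\to Z/\omega_nZ\to M/\omega_nM\to M'/\omega_nM'\to 0$.
\item In the second, the connecting map $Z'[\omega_n]\to M'/\omega_nM'$ is injective. This gives $0\to Z'[\omega_n]\to M'/\omega_nM'\to E/\omega_nE\to Z'/\omega_nZ'\to 0$.
\end{itemize}
Since $\#Z'[\omega_n]=\#(Z'/\omega_nZ')$ for the finite module $Z'$, you get, for every $n\geq 0$,
\[
\#(M/\omega_nM)=\#(Z/\omega_nZ)\cdot\#(E/\omega_nE).
\]
Once $\omega_n$ kills $Z$, the first factor is the constant $\#Z$. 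So there are no ``image terms'' and no connecting maps whose behaviour must stabilize, and $\nu=\nu'+\log_p\#Z$.

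As for what each approach buys: the citation is shorter. Your version is self-contained and makes the invariants explicit.
\begin{itemize}
\item One has exactly $\#(E/\omega_nE)=p^{v_p(\prod_{\zeta\in W_n}{\rm char}(M)(\zeta-1))}$, the $p^{m_i}$-summands included.
\item $Z$ is the maximal finite submodule of $M$, because $E$ has no nonzero finite submodules.
\end{itemize}
Hence $\#(M/\omega_nM)$ is determined by ${\rm char}(M)$ and $\#M_{\rm fin}$. This also shows that Theorem \ref{moduiwasawa} runs on the same engine as the paper's second proof (Theorem \ref{iwasawatype1}), namely Theorem \ref{fef} applied to a characteristic power series. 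That makes it transparent why the paper's two proofs of the Iwasawa-type formula produce the same $\mu$ and $\lambda$.
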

\begin{proof}
    See \cite[Theorem 2.45]{o1}.
\end{proof}
\section{The Iwasawa Modules ${\rm Br}(X_{\infty})[p^{\infty}]^{\lor}$ and ${\rm Br}(\overline{X})^{G^{\infty}}[p^{\infty}]^{\lor}$}\label{briwasawa}
In this section, we will describe some properties of certain Brauer groups. We introduce two $\mathbb{Z}_{p}$-modules ${\rm Br}(X_{\infty})[p^{\infty}]^{\lor}$ and ${\rm Br}(\overline{X})^{G^{\infty}}[p^{\infty}]^{\lor}$.
\subsection{The direct system $({\rm Br}(X_{n})[p^{\infty}]\to {\rm Br}(X_{m})[p^{\infty}])_{n\geq m}$}
Let $q$ be a power of a prime number. Let $X/\mathbb{F}_{q}$ denote a  K3 surface defined over $\mathbb{F}_{q}$, and let $p$ be a prime number with $p\notmid q$. Fix an algebraic closure $\overline{\mathbb{F}}_{q}$ of $\mathbb{F}_{q}$. Then we denote the arithmetic Frobenius  $\overline{\mathbb{F}}_{q}\ni x\mapsto x^{q}\in \overline{\mathbb{F}}_{q}$ by $F$.  

Let $\mathbb{F}_{q^{p^{\infty}}}:= \bigcup_{n\geq 0} \mathbb{F}_{q^{p^{n}}}\subset \overline{\mathbb{F}}_{q}$. For each $n\geq 0$, put
\begin{align}
X_{n}:=X\times_{\mathbb{F}_{q}}{\rm Spec}(\mathbb{F}_{q^{p^{n}}}), ~~~X_{\infty}:=X\times_{\mathbb{F}_{q}}{\rm Spec}(\mathbb{F}_{q^{p^{\infty}}}), ~~~{\rm and}~\overline{X}:=X\times_{\mathbb{F}_{q}}{\rm Spec}(\overline{\mathbb{F}}_{q}). 
\end{align}
For $n\leq m$, we have $\mathbb{F}_{q^{p^{n}}}\subset \mathbb{F}_{q^{p^{m}}}$. Hence the natural projection $X_{m}\to X_{n}$ gives an inverse system $(X_{m}\to X_{n})_{n\leq m}$.  
\begin{prop}\label{Xinfty} (1) For $n\leq m$, the projection $X_{m}\to X_{n}$ is a finite \'etale morphism. 
\\(2) In the category of schemes, the scheme $X_{\infty}$ is the inverse limit of the inverse system $(X_{m}\to X_{n})_{n\leq m}$.
\end{prop}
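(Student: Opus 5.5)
Both parts reduce to statements about the field extensions, because $X_m = X_n\times_{\mathrm{Spec}(\mathbb{F}_{q^{p^n}})}\mathrm{Spec}(\mathbb{F}_{q^{p^m}})$ for $n\le m$. This holds by transitivity of fibre products, since $X_n = X\times_{\mathbb{F}_q}\mathrm{Spec}(\mathbb{F}_{q^{p^n}})$ and the inclusion $\mathbb{F}_{q^{p^n}}\subset\mathbb{F}_{q^{p^m}}$ is compatible with the structure maps from $\mathbb{F}_q$. Hence the projection $X_m\to X_n$ is the base change of $\mathrm{Spec}(\mathbb{F}_{q^{p^m}})\to\mathrm{Spec}(\mathbb{F}_{q^{p^n}})$.

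For (1), it therefore suffices to show that $\mathrm{Spec}(\mathbb{F}_{q^{p^m}})\to \mathrm{Spec}(\mathbb{F}_{q^{p^n}})$ is finite étale. Finite and étale morphisms are both stable under base change, so the claim then transfers to $X_m\to X_n$.
\begin{itemize}
\item \emph{Finite:} $\mathbb{F}_{q^{p^m}}$ is a field extension of degree $p^{m-n}$ of $\mathbb{F}_{q^{p^n}}$, so it is a finite module over it.
\item \emph{Étale:} finite fields are perfect, so the extension is finite separable, and a finite separable field extension is étale. Concretely, it is generated by a root of a separable polynomial (for example $x^{q^{p^m}}-x$, whose derivative is $-1$), which gives a standard étale presentation.
\end{itemize}

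For (2), all the $X_n$ are affine over $X$, so the inverse limit of the system $(X_m\to X_n)$ exists in the category of schemes. It is computed affine-locally on $X$: over an affine open $\mathrm{Spec}(A)\subset X$, the limit is $\mathrm{Spec}$ of $\varinjlim_n (A\otimes_{\mathbb{F}_q}\mathbb{F}_{q^{p^n}})$. Since tensor product commutes with filtered colimits, this ring is $A\otimes_{\mathbb{F}_q}\varinjlim_n\mathbb{F}_{q^{p^n}} = A\otimes_{\mathbb{F}_q}\mathbb{F}_{q^{p^\infty}}$, by the definition $\mathbb{F}_{q^{p^\infty}}=\bigcup_n\mathbb{F}_{q^{p^n}}$. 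These affine pieces glue to $X\times_{\mathbb{F}_q}\mathrm{Spec}(\mathbb{F}_{q^{p^\infty}})=X_\infty$.

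One can organize this more cleanly by first noting that $\mathrm{Spec}(\mathbb{F}_{q^{p^\infty}})=\varprojlim_n\mathrm{Spec}(\mathbb{F}_{q^{p^n}})$, since $\mathrm{Spec}$ turns filtered colimits of rings into limits of affine schemes. Then fibre product with $X$ over $\mathrm{Spec}(\mathbb{F}_q)$ commutes with limits of affine morphisms, which can be checked by the universal property.

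The only mild obstacle is justifying that the limit is taken in the category of all schemes rather than only affine ones. For this I would cite the standard fact on limits of inverse systems with affine transition morphisms (EGA IV 8.2, or the Stacks Project tag on limits of schemes).
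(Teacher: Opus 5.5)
Your proposal is correct and follows the paper's proof. Part (1) is the same base-change argument from the finite separable extension $\mathbb{F}_{q^{p^n}}\subset\mathbb{F}_{q^{p^m}}$, and part (2) uses the same affine-local identity $A\otimes_{\mathbb{F}_q}\mathbb{F}_{q^{p^\infty}}=\varinjlim_n(A\otimes_{\mathbb{F}_q}\mathbb{F}_{q^{p^n}})$ followed by gluing. The only difference is that the paper checks the universal property by hand, gluing morphisms $U_Y\to U_\infty$ over affine opens and using that $U\cap V$ is affine, whereas you cite the standard lemma on limits with affine transition maps, or note that $\mathrm{Spec}$ of a colimit of rings is a limit in all schemes and that fibre products commute with limits, which avoids needing $U\cap V$ affine.
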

\begin{proof}(1) Since the field extension $\mathbb{F}_{q^{p^n}}\subset \mathbb{F}_{q^{p^m}}$ is  finite and separable, the corresponding morphism ${\rm Spec}(\mathbb{F}_{q^{p^m}})\to {\rm Spec}(\mathbb{F}_{q^{p^n}})$ is \'etale and finite by \cite[I-Proposition 3.1]{milne}. Hence the base change $X_{m}=X_{n}\times_{\mathbb{F}_{q^{p^{n}}}}{\rm Spec}(\mathbb{F}_{q^{p^{m}}})\to X_{n}$ is a finite \'etale morphism by \cite[I-Proposition 1.3]{milne} and \cite[I-Proposition 3.3]{milne}. 

(2) Let $U={\rm Spec}(A)\subset X$ be an affine open set. Then, for every $n\geq 0$ or $n=\infty$, the pull-back of $U$ to $X_{n}$ is $U_{n}:={\rm Spec}(A\otimes_{\mathbb{F}_{q}}\mathbb{F}_{q^{p^{n}}})$. Since $\mathbb{F}_{q^{p^{\infty}}}=\varinjlim_{n}\mathbb{F}_{q^{p^{n}}}$, we have $A\otimes_{\mathbb{F}_{q}}\mathbb{F}_{q^{p^{\infty}}}=\varinjlim_{n}(A\otimes_{\mathbb{F}_{q}}\mathbb{F}_{q^{p^{n}}})$, thus $U_{\infty}=\varprojlim_{n}U_{n}$.

For a scheme $Y$, let $(\varphi_{n}: Y\to X_{n})_{n\geq 0}$ be a system of morphisms such that the composition $Y\to X_{m}\to X_{n}$ is $\varphi_{n}$ for every $n\leq m$. For each affine open set $U\subset X$, let $U_{Y}\subset Y$ be the pull-back of $U$ by $\varphi_{0}$. Then there exists a unique morphism $\varphi_{U}: U_{Y}\to U_{\infty}$ such that the composition $U_{Y}\to U_{\infty}\to U_{n}$ is $\varphi_{n}|_{U_{Y}}$ for all $n\geq 0$. For affine open sets $U, V\subset X$, the intersection $U\cap V$ is also affine. Hence we obtain a morphism $\varphi_{U\cap V}: (U\cap V)_{Y}=U_{Y}\cap V_{Y}\to (U\cap V)_{\infty}=U_{\infty}\cap V_{\infty}$. Since $\varphi_{U}|_{(U_{Y}\cap V_{Y})}=\varphi_{U\cap V}=\varphi_{V}|_{(U_{Y}\cap V_{Y})}$, this shows that there exists a unique morphism $\varphi: Y\to X_{\infty}$ such that the composition $Y\to X_{\infty}\to X_{n}$ is $\varphi_{n}$ for all $n\geq 0$, i.e., $X_{\infty}=\varprojlim_{n}X_{n}$.   
\end{proof}
\begin{defi}\label{brauer}(Cohomological Brauer groups.) For a smooth surface $S$ over an arbitrary field, we define the \emph{cohomological Brauer group} by 
\begin{align}
{\rm Br}(S):=H^{2}_{\text{\'et}}(S, \mathbb{G}_{m}),
\end{align}
where $\mathbb{G}_{m}$ is the \'etale sheaf over $S$ associated to the multiplicative group scheme on $S$. 
\end{defi}
\begin{rem} For $S$ in Definition \ref{brauer}, the \emph{Brauer group} of $S$ is the set of equivalence classes of Azumaya algebras with the group structure given by the tensor product. By \cite[Corollaire 2.2]{dix}, the Brauer group of $S$ is canonically isomorphic to the torsion part of $H^{2}_{\text{\'et}}(S, \mathbb{G}_{m})$. On the other hand, the group $H^{2}_{\text{\'et}}(S, \mathbb{G}_{m})$ is torsion by \cite[Chapter 18, Example 1.4-(ii)]{Huy}. Hence the Brauer group of $S$ coincides with the cohomological Brauer group ${\rm Br}(S)$. So we will call  ${\rm Br}(S)$ simply the Brauer group of $S$. 
\end{rem}
The inverse system $(X_{n}\to X_{m})_{n\geq m}$ gives a direct system of abelian groups 
\[({\rm Br}(X_{m})\to {\rm Br}(X_{n}))_{n\geq m}.\] 
\begin{prop}\label{nlim} The system of natural homomorphisms $({\rm Br}(X_{n})\to {\rm Br}(X_{\infty}))_{n\geq 0}$ is the direct limit of $({\rm Br}(X_{m})\to {\rm Br}(X_{n}))_{n\geq m},$ that is,
\begin{align}
{\rm Br}(X_{\infty})=\varinjlim_{n}{\rm Br}(X_{n}).
\end{align}
\end{prop}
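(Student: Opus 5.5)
The plan is to use the fact that étale cohomology commutes with filtered inverse limits of quasi-compact, quasi-separated schemes with affine transition morphisms. By Proposition \ref{Xinfty}(2), $X_{\infty}=\varprojlim_{n}X_{n}$ in the category of schemes. Each $X_{n}$ is quasi-compact and quasi-separated, since it is a proper surface over a field. The transition maps $X_{m}\to X_{n}$ are finite by Proposition \ref{Xinfty}(1), hence affine.

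Under these hypotheses the limit theorem for étale cohomology applies (SGA4 VII, Corollaire 5.8; equivalently Milne, \emph{\'Etale Cohomology}, III, Lemma 1.16). For a sheaf $\mathcal{F}_{0}$ on $X_{0}$ with pullbacks $\mathcal{F}_{n}$ to $X_{n}$ and $\mathcal{F}_{\infty}$ to $X_{\infty}$, it gives
\[
H^{i}_{\text{\'et}}(X_{\infty},\mathcal{F}_{\infty})\cong\varinjlim_{n}H^{i}_{\text{\'et}}(X_{n},\mathcal{F}_{n}),
\]
and the isomorphism is compatible with the pullback maps. We apply this with $i=2$ and $\mathcal{F}=\mathbb{G}_{m}$.

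The point needing justification is that $\mathbb{G}_{m}$ is a legitimate coefficient sheaf here. The theorem is usually stated for sheaves pulled back from $X_{0}$ that are locally of finite presentation, i.e. representable by a scheme locally of finite presentation. The sheaf $\mathbb{G}_{m}$ on $X_{n}$ is represented by $\mathbb{G}_{m,X_{n}}=\mathbb{G}_{m,X}\times_{X}X_{n}$, which is smooth and affine of finite presentation over the base, so it is compatible with base change. Its restriction to $X_{\infty}$ is therefore exactly $\mathbb{G}_{m,X_{\infty}}$.

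Granting the limit theorem, the argument in the standard reference reduces to a Čech-type computation. For affine $U=\mathrm{Spec}(A)$ we have $\mathbb{G}_{m}(U_{\infty})=(A\otimes\mathbb{F}_{q^{p^{\infty}}})^{\times}=\varinjlim_{n}(A\otimes\mathbb{F}_{q^{p^{n}}})^{\times}$. This holds because units in a filtered colimit of rings descend to a finite stage, as in the proof of Proposition \ref{Xinfty}(2). In addition, étale covers of $U_{\infty}$ descend to some finite level, because they are of finite presentation.

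Combining these facts, ${\rm Br}(X_{\infty})=H^{2}_{\text{\'et}}(X_{\infty},\mathbb{G}_{m})=\varinjlim_{n}H^{2}_{\text{\'et}}(X_{n},\mathbb{G}_{m})=\varinjlim_{n}{\rm Br}(X_{n})$. The structure maps are the pullbacks, which gives the claimed direct-limit description.

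I expect the main obstacle to be checking the finite-presentation hypotheses cleanly, and confirming that the colimit maps agree with the natural homomorphisms ${\rm Br}(X_{n})\to{\rm Br}(X_{\infty})$. Both follow from the functoriality statement in the cited limit theorem.
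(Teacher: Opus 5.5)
Your proposal is correct and follows the paper's argument. The paper also deduces the statement directly from $X_{\infty}=\varprojlim_{n}X_{n}$ (Proposition \ref{Xinfty}(2)) together with SGA 4, Expos\'e VII, Corollaire 5.8. The hypotheses you verify are exactly the ones the paper leaves implicit: the $X_n$ are quasi-compact and quasi-separated, the finite transition maps are affine, and $\mathbb{G}_{m}$ on $X_{\infty}$ is the colimit of the pullbacks of $\mathbb{G}_{m}$ from the $X_{n}$, because it is represented by a group scheme of finite presentation.
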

\begin{proof}
This follows from Proposition \ref{Xinfty}-(2) and \cite[Expos\'e VII, Corollaire 5.8]{sga4}.
\end{proof}
\subsection{The Iwasawa modules ${\rm Br}(X_{\infty})[p^{\infty}]^{\lor}$ and ${\rm Br}(\overline{X})^{G^{\infty}}[p^{\infty}]^{\lor}$}  Now we introduce two $\mathbb{Z}_{p}$-modules ${\rm Br}(X_{\infty})[p^{\infty}]^{\lor}$ and ${\rm Br}(\overline{X})^{G^{\infty}}[p^{\infty}]^{\lor}$. We will show that these have natural structures of Iwasawa modules later.  

By Madapusi Pera \cite{mp15} and Kim--Madapusi Pera \cite{kmp}, the Tate conjecture for K3 surfaces holds for every positive characteristic. The following theorem is a consequence.
\begin{thm}[Madapusi Pera \cite{mp15} and Kim--Madapusi Pera \cite{kmp}]\label{tate conjecture for k3} For $n\geq 0$, the group ${\rm Br}(X_{n})[p^{\infty}]$ is a finite group. \end{thm}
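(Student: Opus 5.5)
The plan is to deduce the finiteness of ${\rm Br}(X_{n})[p^{\infty}]$ from the Tate conjecture for the K3 surface $X_{n}$ over the finite field $\mathbb{F}_{q^{p^{n}}}$, proved in \cite{mp15} and \cite{kmp}, using the standard equivalence between the $\ell$-adic Tate conjecture and finiteness of the $\ell$-primary part of the Brauer group. Note that $X_{n}$ is itself a K3 surface over $k_{n}:=\mathbb{F}_{q^{p^{n}}}$: base change along a finite separable field extension preserves smoothness and properness, and $H^{1}(X_{n},\mathcal{O})=H^{1}(X,\mathcal{O})\otimes k_{n}=0$, $\Omega^{2}_{X_{n}}\cong\mathcal{O}_{X_{n}}$ by flat base change. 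So it suffices to treat $n=0$, with $q$ replaced by $q^{p^{n}}$.

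First, the Kummer sequence $0\to\mu_{p^{m}}\to\mathbb{G}_{m}\to\mathbb{G}_{m}\to0$ (exact on the étale site since $p\nmid q$) gives
\[
0\to {\rm Pic}(X)\otimes\mathbb{Z}/p^{m}\to H^{2}_{\text{\'et}}(X,\mu_{p^{m}})\to {\rm Br}(X)[p^{m}]\to0 .
\]
Passing to the inverse limit over $m$ yields
\[
0\to {\rm Pic}(X)\otimes\mathbb{Z}_{p}\to H^{2}_{\text{\'et}}(X,\mathbb{Z}_{p}(1))\to T_{p}{\rm Br}(X)\to0 .
\]
Here ${\rm Pic}(X)$ is finitely generated, so the Mittag-Leffler condition holds and the sequence stays exact. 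The group ${\rm Br}(X)[p^{\infty}]$ is of cofinite type, because $H^{2}_{\text{\'et}}(X,\mu_{p^{m}})$ has bounded $\mathbb{Z}/p^{m}$-rank: its order is bounded via Hochschild–Serre by the finite $\ell$-adic cohomology of $\overline{X}$. Hence ${\rm Br}(X)[p^{\infty}]$ is finite if and only if $T_{p}{\rm Br}(X)=0$. Equivalently, it is finite if and only if the cycle class map ${\rm Pic}(X)\otimes\mathbb{Q}_{p}\to H^{2}_{\text{\'et}}(X,\mathbb{Q}_{p}(1))$ is surjective.

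Next, relate $H^{2}_{\text{\'et}}(X,\mathbb{Q}_{p}(1))$ to the geometric cohomology via Hochschild–Serre for $\overline{X}/X$ with group $\hat{\mathbb{Z}}$. Since $H^{1}(\overline{X},\mathbb{Q}_{p})=0$ for a K3 surface, $H^{2}(X,\mathbb{Q}_{p}(1))\cong H^{2}(\overline{X},\mathbb{Q}_{p}(1))^{G}$, where $G={\rm Gal}(\overline{\mathbb{F}}_{q}/\mathbb{F}_{q})$. The Tate conjecture of \cite{mp15}, \cite{kmp} says that ${\rm Pic}(\overline{X})\otimes\mathbb{Q}_{p}\to H^{2}(\overline{X},\mathbb{Q}_{p}(1))^{G}$ is surjective.

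The remaining task, and the main obstacle, is to descend this from ${\rm Pic}(\overline{X})^{G}$ to ${\rm Pic}(X)$. For this I would use that ${\rm Pic}(X)\to{\rm Pic}(\overline{X})^{G}$ has finite cokernel, controlled by ${\rm Br}(\mathbb{F}_{q})=0$ and $H^{3}(G,\overline{\mathbb{F}}_{q}^{*})=0$ in the Hochschild–Serre spectral sequence for $\mathbb{G}_{m}$; in fact it is an isomorphism. Since cohomology is finite-dimensional, $({\rm Pic}(\overline{X})\otimes\mathbb{Q}_{p})^{G}={\rm Pic}(\overline{X})^{G}\otimes\mathbb{Q}_{p}$, so the image of ${\rm Pic}(X)\otimes\mathbb{Q}_{p}$ fills $H^{2}(X,\mathbb{Q}_{p}(1))$. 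Thus $T_{p}{\rm Br}(X)=0$, and ${\rm Br}(X_{n})[p^{\infty}]$ is finite for every $n$. Alternatively, one may simply cite \cite[16.1.4]{cs}, which packages exactly this equivalence for surfaces over finite fields.
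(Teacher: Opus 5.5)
Your proof is correct and follows the paper's route: the paper applies the Tate conjecture of \cite{mp15}, \cite{kmp} and cites the standard equivalence \cite[Chapter 18, Lemma 2.5]{Huy} between it and the finiteness of ${\rm Br}(X_{n})[p^{\infty}]$, and you have simply reproved the needed direction of that lemma via the Kummer sequence on $X$, Hochschild--Serre, and ${\rm Pic}(X)\cong{\rm Pic}(\overline{X})^{G}$. One small point: that last isomorphism needs only Hilbert 90 and ${\rm Br}(\mathbb{F}_{q})=0$; the vanishing of $H^{3}(G,\overline{\mathbb{F}}_{q}^{*})$ plays no role there.
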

\begin{proof}
By \cite[Chapter 18, Lemma 2.5]{Huy}, the finiteness of ${\rm Br}(X_{n})[p^{\infty}]$ is equivalent to the Tate conjecture for $H^{2}_{\text{\'et}}(\overline{X}, \mathbb{Z}_{p}(1))^{G^{n}}$. However, the Tate conjecture for K3 surfaces in every positive characteristic is true. 
\end{proof}
Let $G:={\rm Gal}(\overline{\mathbb{F}}_{q}/\mathbb{F}_{q})$ be the absolute Galois group. For $n\geq 0$, let $G^{n}:={\rm Gal}(\overline{\mathbb{F}}_{q}/\mathbb{F}_{q^{p^{n}}})$, and let ${\rm Br}(\overline{X})^{G^{n}}$ denote the $G^{n}$-invariant part of ${\rm Br}(\overline{X})$. The finiteness of the geometric parts ${\rm Br}(\overline{X})^{G^{n}}[p^{\infty}]$ has been proved by Colliot-Th\`el\'ene and Skorobogatov \cite{cs}.
\begin{thm}[Colliot-Th\`el\'ene and Skorobogatov \cite{cs}]\label{ls} For each $n\geq 0$, the group ${\rm Br}(\overline{X})^{G^{n}}[p^{\infty}]$ is a finite group.\end{thm}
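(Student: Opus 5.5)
The plan is to deduce the finiteness of ${\rm Br}(\overline{X})^{G^{n}}[p^{\infty}]$ from the finiteness of ${\rm Br}(X_{n})[p^{\infty}]$ (Theorem \ref{tate conjecture for k3}), using the Hochschild--Serre spectral sequence for the Galois cover $\overline{X}\to X_{n}$ with group $G^{n}\cong\widehat{\mathbb{Z}}$. Replacing $X$ by $X_{n}$, which is again a K3 surface over the finite field $\mathbb{F}_{q^{p^{n}}}$, it suffices to treat $n=0$.

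Write $H^{i}(-)$ for Galois cohomology of $G$. The spectral sequence $E_{2}^{a,b}=H^{a}(G,H^{b}_{\text{\'et}}(\overline{X},\mathbb{G}_{m}))\Rightarrow H^{a+b}_{\text{\'et}}(X,\mathbb{G}_{m})$ gives the standard exact sequence
\begin{equation}
{\rm Br}(X)\to {\rm Br}(\overline{X})^{G}\to H^{2}(G,{\rm Pic}(\overline{X}))\to H^{3}_{\text{\'et}}(X,\mathbb{G}_{m}),
\end{equation}
in which the low-degree terms involving $\overline{\mathbb{F}}_{q}^{*}$ cause no trouble: $H^{0}(\overline{X},\mathbb{G}_{m})=\overline{\mathbb{F}}_{q}^{*}$ and $H^{i}(G,\overline{\mathbb{F}}_{q}^{*})=0$ for $i\geq 1$ (Hilbert 90, together with ${\rm Br}(\mathbb{F}_{q})=0$ and $\mathrm{cd}(G)=1$ on torsion). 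It is therefore enough to show that the cokernel of ${\rm Br}(X)[p^{\infty}]\to{\rm Br}(\overline{X})^{G}[p^{\infty}]$ is finite, and for this it suffices to bound $H^{2}(G,{\rm Pic}(\overline{X}))[p^{\infty}]$.

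For a K3 surface, ${\rm Pic}(\overline{X})=\mathrm{NS}(\overline{X})$ is a free $\mathbb{Z}$-module of finite rank, and $G$ acts on it through a finite quotient. Write $P={\rm Pic}(\overline{X})$. For $G\cong\widehat{\mathbb{Z}}$ acting on a finitely generated free module $P$ via a finite quotient, the sequence $0\to P\to P\otimes\mathbb{Q}\to P\otimes\mathbb{Q}/\mathbb{Z}\to 0$ gives $H^{2}(G,P)\cong H^{1}(G,P\otimes\mathbb{Q}/\mathbb{Z})$, because $H^{i}(G,P\otimes\mathbb{Q})=0$ for $i\geq1$ (these groups are torsion and uniquely divisible). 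The right-hand side is $(P\otimes\mathbb{Q}/\mathbb{Z})/(F-1)$. Taking $p$-parts, this is the cokernel of $F-1$ on $P\otimes\mathbb{Q}_{p}/\mathbb{Z}_{p}\cong(\mathbb{Q}_{p}/\mathbb{Z}_{p})^{\rho}$. This cokernel is a quotient of a divisible group by the image of an endomorphism, so it is divisible; it is also a quotient of a cofinitely generated group, so it is cofinitely generated. A divisible cofinitely generated group is not finite in general, but this one is zero. Indeed, its Pontryagin dual is the kernel of $F-1$ on $P\otimes\mathbb{Z}_{p}$ (up to twist), which is torsion-free. So the dual is a free $\mathbb{Z}_{p}$-module of rank $\rho(X)$, and the cokernel is $(\mathbb{Q}_{p}/\mathbb{Z}_{p})^{\rho(X)}$, which is \emph{infinite}. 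The naive bound therefore fails.

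This is the main obstacle, and I would resolve it with a sharper argument. The map $H^{2}(G,P)\to H^{3}_{\text{\'et}}(X,\mathbb{G}_{m})$ need not be injective; its kernel is the cokernel of ${\rm Br}(X)\to{\rm Br}(\overline{X})^{G}$, so only that image matters. Instead of the $\mathbb{G}_{m}$ sequence, I would work with the Kummer sequences at finite level. There is a $G$-equivariant exact sequence
\begin{equation}
0\to P\otimes\mathbb{Q}_{p}/\mathbb{Z}_{p}\to H^{2}_{\text{\'et}}(\overline{X},\mathbb{Q}_{p}/\mathbb{Z}_{p}(1))\to{\rm Br}(\overline{X})[p^{\infty}]\to0 .
\end{equation}
Then ${\rm Br}(\overline{X})[p^{\infty}]\cong (V/N\otimes\mathbb{Q}_{p}/\mathbb{Z}_{p})$ up to finite error, where $V=H^{2}(\overline{X},\mathbb{Z}_{p}(1))$ and $N=P\otimes\mathbb{Z}_{p}$ is saturated. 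The $G$-invariants of $T\otimes\mathbb{Q}_{p}/\mathbb{Z}_{p}$, with $T=V/N$ torsion-free, are finite exactly when $F$ has no eigenvalue $1$ on $T\otimes\mathbb{Q}_{p}$. By the Tate conjecture (Theorem \ref{tate conjecture for k3}, in its equivalent form from \cite[Chapter 18]{Huy}), together with semisimplicity of Frobenius on $H^{2}$ for K3 surfaces (Deligne), the eigenvalue-$1$ generalized eigenspace of $F$ on $V\otimes\mathbb{Q}_{p}$ equals $N\otimes\mathbb{Q}_{p}$. Hence $F-1$ is invertible on $T\otimes\mathbb{Q}_{p}$, and $(T\otimes\mathbb{Q}_{p}/\mathbb{Z}_{p})^{G}$ is finite, of order $p^{v_{p}(\det(F-1\mid T))}$. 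The remaining finite discrepancies, namely the finite torsion in the cokernel and the saturation, are harmless. This completes the argument. The key input is semisimplicity, or equivalently that the Tate classes exhaust the generalized eigenspace.
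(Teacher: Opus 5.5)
Your final argument is correct, but it takes a different route from the paper. The paper's proof is one line: it starts from the finiteness of ${\rm Br}(X_n)[p^{\infty}]$ (Theorem \ref{tate conjecture for k3}) and transfers it to ${\rm Br}(\overline{X})^{G^n}[p^{\infty}]$ by citing \cite[Theorem 16.1.4]{cs}. Your Hochschild--Serre attempt to make that transfer by hand stalls for exactly the reason you found: the $p$-part of $H^2(G,{\rm Pic}(\overline{X}))$ is divisible of positive corank. That is precisely the gap the citation fills, so that part of your write-up can be dropped.

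Your second argument never touches ${\rm Br}(X_n)$. With $T=T_p{\rm Br}(\overline{X})$, one has ${\rm Br}(\overline{X})[p^{\infty}]\cong T\otimes\mathbb{Q}_p/\mathbb{Z}_p$ exactly for a K3 surface (Proposition \ref{brtate}), so there is no ``finite error''. The snake lemma then identifies the $F$-invariants with ${\rm coker}(F-1\mid T)$ once $\det(F-1\mid T)\neq 0$. This gives a self-contained proof that also yields the exact order $p^{v_p(L_{\rm tr}(X_n,1))}$, i.e.\ Corollary \ref{geoeigen}, which the paper derives separately via Proposition \ref{dualtate}.

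The price is that you must supply the semisimplicity at the eigenvalue $1$ that the citation hides. You do not need Deligne for this. $F$ preserves the cup product on $V\otimes\mathbb{Q}_p$. Its restriction to $W=(V\otimes\mathbb{Q}_p)^{F=1}={\rm NS}(X)\otimes\mathbb{Q}_p$ is the intersection form, which is nondegenerate. Hence $V\otimes\mathbb{Q}_p=W\oplus W^{\perp}$ with $(W^{\perp})^{F=1}=0$.

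One slip: the generalized $1$-eigenspace is $W=(N\otimes\mathbb{Q}_p)^{F=1}$, not all of $N\otimes\mathbb{Q}_p$; they differ whenever $G$ acts nontrivially on ${\rm NS}(\overline{X})$. You only use that it lies in $N\otimes\mathbb{Q}_p$, so the invertibility of $F-1$ on $T\otimes\mathbb{Q}_p$ is unaffected.
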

\begin{proof}
   This follows from Theorem \ref{tate conjecture for k3} and \cite[Theorem 16.1.4]{cs}.
\end{proof}
\begin{lem} (1) The module ${\rm Br}(X_{\infty})[p^{\infty}]$ has a natural structure of a discrete $\Gamma$-module. In particular, the Pontryagin dual 
\begin{align}{\rm Br}(X_{\infty})[p^{\infty}]^{\lor}:={\rm Hom}_{{\rm cont}}({\rm Br}(X_{\infty})[p^{\infty}], \mathbb{Q}_{p}/\mathbb{Z}_{p})
\end{align} has a natural structure of a compact $\Lambda$-module, i.e., an Iwasawa module.
\\(2) The module 
\[{\rm Br}(\overline{X})^{G^{\infty}}[p^{\infty}]:=\varinjlim_{n}{\rm Br}(\overline{X})^{G^{n}}[p^{\infty}]=\bigcup_{n\geq 0}{\rm Br}(\overline{X})^{G^{n}}[p^{\infty}]\subset {\rm Br}(\overline{X})[p^{\infty}]\]
 has a natural structure of a discrete $\Gamma$-module. In particular,  the Pontryagin dual 
\begin{align}{\rm Br}(\overline{X})^{G^{\infty}}[p^{\infty}]^{\lor}:={\rm Hom}_{{\rm cont}}({\rm Br}(\overline{X})^{G^{\infty}}[p^{\infty}], \mathbb{Q}_{p}/\mathbb{Z}_{p})
\end{align} has a natural structure of a compact $\Lambda$-module, i.e., an Iwasawa module.
\end{lem}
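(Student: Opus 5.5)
The plan is to build the $\Gamma$-action in each case directly from functoriality of étale cohomology under automorphisms of the base, check that every element has an open stabilizer (this is what \emph{discrete} means), and then apply Pontryagin duality together with Lemma \ref{natiwasawa}.

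For (1), let $\sigma\in\Gamma={\rm Gal}(\mathbb{F}_{q^{p^{\infty}}}/\mathbb{F}_{q})$. It acts on $X_{\infty}=X\times_{\mathbb{F}_{q}}{\rm Spec}(\mathbb{F}_{q^{p^{\infty}}})$ through the second factor as an automorphism over $X$. Pulling back along ${\rm id}_X\times{\rm Spec}(\sigma)$ gives an automorphism of $H^{2}_{\text{\'et}}(X_{\infty},\mathbb{G}_{m})$, and we normalize it (using $\sigma^{-1}$ if necessary) so that this is a left action. Being a group automorphism, it preserves ${\rm Br}(X_{\infty})[p^{\infty}]$.

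Discreteness comes from Proposition \ref{nlim}. Every $x\in{\rm Br}(X_{\infty})[p^{\infty}]$ is the image of some class in ${\rm Br}(X_{n})$. The automorphism induced by $\sigma\in\Gamma^{n}$ is compatible with the identity on $X_{n}$, since $\Gamma^{n}$ fixes $\mathbb{F}_{q^{p^{n}}}$. Hence $x$ is fixed by the open subgroup $\Gamma^{n}$, and the action is continuous for the discrete topology.

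Next, ${\rm Br}(X_{\infty})[p^{\infty}]$ is a discrete $p$-primary torsion group. Its Pontryagin dual ${\rm Hom}_{\rm cont}(-,\mathbb{Q}_{p}/\mathbb{Z}_{p})$ is therefore a profinite abelian pro-$p$ group, hence a compact $\mathbb{Z}_{p}$-module. The contragredient action $(\sigma f)(x)=f(\sigma^{-1}x)$ is continuous on it: for $f$ and a finite set of test elements, the stabilizers are open. Lemma \ref{natiwasawa} then gives the Iwasawa module structure.

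For (2), $G$ acts on $\overline{X}$ through $\overline{\mathbb{F}}_{q}$, hence on ${\rm Br}(\overline{X})$ and on ${\rm Br}(\overline{X})[p^{\infty}]$. The subgroup $G^{\infty}:=\bigcap_n G^{n}={\rm Gal}(\overline{\mathbb{F}}_{q}/\mathbb{F}_{q^{p^{\infty}}})$ is normal, since $G$ is abelian. So $G$ preserves each ${\rm Br}(\overline{X})^{G^{n}}[p^{\infty}]$, and hence their union ${\rm Br}(\overline{X})^{G^{\infty}}[p^{\infty}]$.

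On the union, $G^{\infty}$ acts trivially, because each element lies in some $G^{n}$-invariant part and $G^{\infty}\subset G^{n}$. Thus the action factors through $G/G^{\infty}\cong\Gamma$. An element of ${\rm Br}(\overline{X})^{G^{n}}[p^{\infty}]$ is fixed by the image $\Gamma^{n}$ of $G^{n}$, which is open, so the module is discrete. Dualizing as in (1) and invoking Lemma \ref{natiwasawa} finishes the proof.

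The only delicate point is the well-definedness and continuity check in (1). One must make sure the induced action on ${\rm Br}(X_{\infty})$ is compatible with the direct limit of Proposition \ref{nlim}, i.e.\ that the maps ${\rm Br}(X_n)\to{\rm Br}(X_\infty)$ are $\Gamma^n$-equivariant with trivial action on the source. This follows from functoriality of pullback applied to the commuting square formed by $X_{\infty}\xrightarrow{\sigma}X_{\infty}$ over $X_{n}$.
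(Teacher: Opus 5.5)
Your proposal is correct and follows essentially the same route as the paper. In (1), each class comes from some ${\rm Br}(X_n)[p^{\infty}]$ by Proposition \ref{nlim}, so it is fixed by the open subgroup $\Gamma^{n}$; one then dualizes and applies Lemma \ref{natiwasawa}. In (2), your observation that the $G$-action on the union factors through $G/G^{\infty}\cong\Gamma$ is a cleaner packaging of the paper's construction via lifts $g_n\in G$ of $\gamma_n\in\Gamma_n$. Your commuting-square argument over $X_n$ also supplies a detail the paper leaves implicit. The only slip is cosmetic: for $G$ to preserve ${\rm Br}(\overline{X})^{G^{n}}[p^{\infty}]$ you need $G^{n}$, not $G^{\infty}$, to be normal, which is automatic since $G$ is abelian.
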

\begin{proof}(1) By \cite[Theorem 5.3]{pont} (see also \cite[Theorem 1.1.8]{nsw}), the Pontryagin dual of a discrete module is a compact $\mathbb{Z}_{p}$-module with the compact-open topology. 

We see that the natural action of $\Gamma$ on the discrete group ${\rm Br}(X_{\infty})$ is continuous. By Proposition \ref{nlim}, we have
\[{\rm Br}(X_{\infty})[p^{\infty}]=\varinjlim_{n}\left({\rm Br}(X_{n})[p^{\infty}]\right).\]
 Let $\alpha\in {\rm Br}(X_{\infty})[p^{\infty}]$. It suffices to show that the stabilizer ${\rm Stab}(\alpha)$ is open in $\Gamma$. For some $n\geq 0$, $\alpha$ is in the image of ${\rm Br}(X_{n})[p^{\infty}]$, which is contained in ${\rm Br}(X_{\infty})^{\Gamma^{n}}[p^{\infty}]$, where $\Gamma^{n}:={\rm Gal}(\mathbb{F}_{q^{p^{\infty}}}/\mathbb{F}_{q^{p^{n}}})$. This means that $\Gamma^{n}\subset {\rm Stab}(\alpha)$, i.e., open. 
 
 For $g\in \Gamma$ and $f\in {\rm Br}(X_{\infty})[p^{\infty}]^{\lor}$, we define $gf(\alpha):=f(g^{-1}\alpha)$ for every $\alpha\in {\rm Br}(X_{\infty})[p^{\infty}]$. We  see that this action is continuous. By Lemma \ref{natiwasawa}, the compact $\mathbb{Z}_{p}$-module ${\rm Br}(X_{\infty})[p^{\infty}]^{\lor}$ has a natural structure of a compact $\Lambda$-module.

(2) As in (1), we can show that the Pontryagin dual ${\rm Br}(\overline{X})^{G^{\infty}}[p^{\infty}]^{\lor}$ has a natural structure of a compact $\mathbb{Z}_{p}$-module. 

We define the action of $\Gamma$ on ${\rm Br}(\overline{X})^{G^{\infty}}[p^{\infty}]$ as follows. Let $\gamma_{n}\in \Gamma_{n}$ be an element, and let $\alpha_{n}\in {\rm Br}(\overline{X})^{G^{n}}[p^{\infty}]$. Let $g_{n}\in G={\rm Gal}(\overline{\mathbb{F}}_{q}/\mathbb{F}_{q})$ be an  extension of $\gamma_{n}$ to $\overline{\mathbb{F}}_{q}$. Then we define  $\gamma_{n}\alpha_{n}:=g_{n}\alpha_{n}$. Since the group $G$ is commutative, the element $g_{n}\alpha_{n}$ is in ${\rm Br}(\overline{X})^{G^{n}}[p^{\infty}]$. If $g^{'}_{n}\in G$ is another extension of $\gamma_{n}$, then $g^{'}_{n}g^{-1}_{n}\in G^{n}$, thus $g^{'}_{n}g^{-1}_{n}\alpha_{n}=\alpha_{n}$. Hence $g^{'}_{n}\alpha_{n}=g_{n}\alpha_{n}$. 

For $(\gamma_{n})\in \Gamma=\varprojlim_{n}\Gamma_{n}$ and $\alpha\in {\rm Br}(\overline{X})^{G^{\infty}}[p^{\infty}]$, let 
\begin{equation}\label{gamman}
(\gamma_{n})\alpha:=\gamma_{n}\alpha_{n},
\end{equation}
 where we take $\alpha=\alpha_{n}\in {\rm Br}(\overline{X})^{G^{n}}[p^{\infty}]$ for some $n\geq 0$. For some $m\geq n$, if $\alpha_{m}$ is another element with $\alpha=\alpha_{m}$, then we  choose $g\in G$ as the extension of $\gamma_{m}$ to $\overline{\mathbb{F}}_{q}$. Since $g|_{\mathbb{F}_{q^{p^n}}}=\gamma_{n}$, we have $\gamma_{m}\alpha_{m}=g\alpha=\gamma_{n}\alpha_{n}$. 
 
 The action \eqref{gamman} is continuous. In fact, if we choose $\alpha=\alpha_{n}$, then we can see, in the same way as in (1), that $\Gamma^{n}\subset {\rm Stab}(\alpha)$. Hence, as in  (1), the dual ${\rm Br}(\overline{X})^{G^{\infty}}[p^{\infty}]^{\lor}$ has a natural structure of a compact $\Lambda$-module. \end{proof}
\subsection{The properties of the Iwasawa module ${\rm Br}(\overline{X})^{G^{\infty}}[p^{\infty}]^{\lor}$} Now, we shall prove that the Iwasawa module ${\rm Br}(\overline{X})^{G^{\infty}}[p^{\infty}]^{\lor}$ is a finitely generated torsion $\Lambda$-module. Let $\gamma\in\Gamma$ be the topological generator induced from the arithmetic Frobenius $F\in G$. 
 \begin{lem}\label{geoquo}For every $n\geq 0$, the projection ${\rm Br}(\overline{X})^{G^{\infty}}[p^{\infty}]^{\lor}\to {\rm Br}(\overline{X})^{G^{n}}[p^{\infty}]^{\lor}$ induces an isomorphism of $\Lambda/(\gamma^{p^{n}}-1)\Lambda$-modules
 \begin{align} 
{\rm Br}(\overline{X})^{G^{\infty}}[p^{\infty}]^{\lor}/(\gamma^{p^{n}}-1){\rm Br}(\overline{X})^{G^{\infty}}[p^{\infty}]^{\lor}\xrightarrow{\sim} {\rm Br}(\overline{X})^{G^{n}}[p^{\infty}]^{\lor}.
 \end{align}
 \end{lem}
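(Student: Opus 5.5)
Write $A:={\rm Br}(\overline{X})^{G^{\infty}}[p^{\infty}]$, a discrete $\Gamma$-module, and $M:=A^{\lor}$. The plan is to prove the dual statement for $A$ and then apply Pontryagin duality. Concretely, we identify the $\Gamma^{n}$-invariants $A^{\Gamma^{n}}$ with ${\rm Br}(\overline{X})^{G^{n}}[p^{\infty}]$. We then use the general fact that for a discrete module $A$ the dual of $A[\gamma^{p^{n}}-1]$ is the coinvariant quotient $M/(\gamma^{p^{n}}-1)M$.

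\textbf{Step 1: the invariants.} The subgroup $\Gamma^{n}$ is topologically generated by $\gamma^{p^{n}}$, and the action of $\Gamma$ on $A$ is continuous with discrete $A$. Hence $A^{\Gamma^{n}}=\{\alpha\in A : \gamma^{p^{n}}\alpha=\alpha\}$. Indeed, the stabilizer of $\alpha$ is a closed subgroup, so it contains $\Gamma^{n}$ as soon as it contains $\gamma^{p^{n}}$.

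By the definition of the action \eqref{gamman}, $\gamma^{p^{n}}$ acts on $\alpha\in{\rm Br}(\overline{X})^{G^{m}}[p^{\infty}]$ through $F^{p^{n}}\in G$. The closed subgroup of $G$ generated by $F^{p^{n}}$ is $G^{n}$, since $G\cong\widehat{\Z}$ with $F$ as a generator. Every $\alpha\in A$ has open stabilizer in $G$, because it lies in some ${\rm Br}(\overline{X})^{G^{m}}$. The same closedness argument therefore gives
\[
A^{\Gamma^{n}}=A\cap {\rm Br}(\overline{X})^{G^{n}}={\rm Br}(\overline{X})^{G^{n}}[p^{\infty}].
\]
The last equality holds because ${\rm Br}(\overline{X})^{G^{n}}[p^{\infty}]\subset A$ by definition. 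This gives an exact sequence
\[
0\to {\rm Br}(\overline{X})^{G^{n}}[p^{\infty}]\to A\xrightarrow{\gamma^{p^{n}}-1} A.
\]

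\textbf{Step 2: dualize.} Pontryagin duality is exact and contravariant on discrete/compact $\mathbb{Z}_{p}$-modules. The dual of the map $\gamma^{p^{n}}-1$ on $A$ is the map $\gamma^{-p^{n}}-1$ on $M$, because of the convention $(gf)(\alpha)=f(g^{-1}\alpha)$. Dualizing the sequence of Step 1 gives
\[
M\xrightarrow{\gamma^{-p^{n}}-1}M\to {\rm Br}(\overline{X})^{G^{n}}[p^{\infty}]^{\lor}\to 0,
\]
and the second map is exactly the restriction (projection) map in the statement.

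Since $\gamma^{-p^{n}}-1=-\gamma^{-p^{n}}(\gamma^{p^{n}}-1)$ and $\gamma^{-p^{n}}$ is a unit of $\Lambda$, the images of $\gamma^{-p^{n}}-1$ and $\gamma^{p^{n}}-1$ coincide. So the projection induces a bijection $M/(\gamma^{p^{n}}-1)M\xrightarrow{\sim}{\rm Br}(\overline{X})^{G^{n}}[p^{\infty}]^{\lor}$. This map is $\Lambda$-linear because restriction commutes with the $\Gamma$-actions, and both sides are killed by $\gamma^{p^{n}}-1$, so it is an isomorphism of $\Lambda/(\gamma^{p^{n}}-1)\Lambda$-modules.

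\textbf{Main obstacle.} The only delicate point is Step 1: showing that fixing $\gamma^{p^{n}}$ forces invariance under the whole of $G^{n}$, not just under the cyclic group generated by $F^{p^{n}}$. I will handle this with the continuity/closed-stabilizer argument above, using that each element of $A$ already lies in some ${\rm Br}(\overline{X})^{G^{m}}$. Exactness of duality on these torsion modules is standard (cf. \cite[Theorem 1.1.8]{nsw}). Note that the argument does not use the finiteness from Theorem \ref{ls}.
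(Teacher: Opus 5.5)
Your proposal is correct and follows the paper's proof. You identify the kernel of $\gamma^{p^{n}}-1$ on ${\rm Br}(\overline{X})^{G^{\infty}}[p^{\infty}]$ with ${\rm Br}(\overline{X})^{G^{n}}[p^{\infty}]$ because $F^{p^{n}}$ topologically generates $G^{n}$, and then take Pontryagin duals; along the way you justify the topological-generator step via closed stabilizers and note that the dual map is $\gamma^{-p^{n}}-1$, which has the same image as $\gamma^{p^{n}}-1$, two points the paper leaves implicit.
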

 \begin{proof}Consider the exact sequence of $\Gamma$-modules:
 \[ 0\to {\rm Br}(\overline{X})^{G^{\infty}}[p^{\infty}]^{\gamma^{p^{n}}=1}\to {\rm Br}(\overline{X})^{G^{\infty}}[p^{\infty}]\xrightarrow{\gamma^{p^{n}}-1}{\rm Br}(\overline{X})^{G^{\infty}}[p^{\infty}],\]
 where ${\rm Br}(\overline{X})^{G^{\infty}}[p^{\infty}]^{\gamma^{p^{n}}=1}$ is the kernel of ${\rm Br}(\overline{X})^{G^{\infty}}[p^{\infty}]\xrightarrow{\gamma^{p^{n}}-1}{\rm Br}(\overline{X})^{G^{\infty}}[p^{\infty}]$. Since $F^{p^{n}}$ is a topological generator of $G^{n}$, we must have ${\rm Br}(\overline{X})^{G^{\infty}}[p^{\infty}]^{\gamma^{p^{n}}=1}={\rm Br}(\overline{X})^{G^{n}}[p^{\infty}]$. Taking the Pontryagin dual, we have an exact sequence of $\Lambda$-modules:
 \[{\rm Br}(\overline{X})^{G^{\infty}}[p^{\infty}]^{\lor}\xrightarrow{\gamma^{p^{n}}-1}{\rm Br}(\overline{X})^{G^{\infty}}[p^{\infty}]^{\lor}\to {\rm Br}(\overline{X})^{G^{n}}[p^{\infty}]^{\lor}\to0,\]
  where  the $\Lambda$-module structure of ${\rm Br}(\overline{X})^{G^{n}}[p^{\infty}]^{\lor}$ is induced by the restriction $\Gamma\to \Gamma_{n}$.
 \end{proof}
\begin{prop}\label{geotorsion}The Iwasawa module ${\rm Br}(\overline{X})^{G^{\infty}}[p^{\infty}]^{\lor}$ is a finitely generated torsion $\Lambda$-module. 
\end{prop}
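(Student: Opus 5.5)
Write $M:={\rm Br}(\overline{X})^{G^{\infty}}[p^{\infty}]^{\lor}$ and $\mathfrak{m}=(p,T)$ for the maximal ideal of $\Lambda\cong\mathbb{Z}_{p}[[T]]$ (Proposition \ref{idlambda}, with $\gamma\mapsto 1+T$). The argument has two steps: finite generation via the topological Nakayama lemma, and torsion-ness by bounding the $\mathbb{Z}_{p}$-corank of ${\rm Br}(\overline{X})[p^{\infty}]$.

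\textbf{Finite generation.} By Lemma \ref{geoquo} with $n=0$, we have $M/TM\cong {\rm Br}(\overline{X})^{G}[p^{\infty}]^{\lor}$, which is finite by Theorem \ref{ls}. Hence $M/\mathfrak{m}M$ is finite. The module $M$ is compact, being the Pontryagin dual of a discrete torsion group. The topological Nakayama lemma for compact $\Lambda$-modules (e.g.\ \cite{nsw} or \cite{o1}) therefore shows that $M$ is finitely generated over $\Lambda$.

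\textbf{Torsion-ness.} By the structure theorem (Theorem \ref{iwstr}, or rather its general version allowing a free part), $M$ is pseudo-isomorphic to $\Lambda^{r}\oplus(\text{torsion})$, and we must show $r=0$.

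If $r\geq1$, then $M/(\gamma^{p^{n}}-1)M$ has $\mathbb{Z}_{p}$-rank at least $r\,p^{n}$, since $\Lambda/(\gamma^{p^{n}}-1)\cong\mathbb{Z}_{p}[\Gamma_{n}]$ is free of rank $p^{n}$ and pseudo-isomorphisms only change things by finite modules. In particular this quotient would be infinite. This contradicts Lemma \ref{geoquo} together with Theorem \ref{ls}, which say that $M/(\gamma^{p^{n}}-1)M\cong{\rm Br}(\overline{X})^{G^{n}}[p^{\infty}]^{\lor}$ is finite.

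One point needs care here: the quotient by $\gamma^{p^{n}}-1$ of a free part glued by finite kernel and cokernel remains infinite. This holds because a finite module cannot absorb a positive-rank $\mathbb{Z}_{p}$-module; one checks it with the snake lemma applied to multiplication by $\gamma^{p^{n}}-1$ on the pseudo-isomorphism sequence.

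\textbf{A more direct route.} Alternatively, one can bound $M$ inside a module of finite $\mathbb{Z}_{p}$-rank. The group ${\rm Br}(\overline{X})[p^{\infty}]$ is a quotient of $H^{2}_{\text{\'et}}(\overline{X},\mathbb{Q}_{p}/\mathbb{Z}_{p}(1))$, which has corank $22$. So $M$ is a quotient-free submodule of a $\mathbb{Z}_{p}$-module of rank at most $22$, and is therefore finitely generated over $\mathbb{Z}_{p}$. A finitely generated $\mathbb{Z}_{p}$-module cannot contain a copy of $\Lambda$, so $M$ is $\Lambda$-torsion.

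This route gives both finite generation and torsion-ness at once. To make it precise, I would dualize the surjection $H^{2}_{\text{\'et}}(\overline{X},\mathbb{Q}_{p}/\mathbb{Z}_{p}(1))\twoheadrightarrow{\rm Br}(\overline{X})[p^{\infty}]$, coming from the Kummer sequence, and then note that $M$ is a $\mathbb{Z}_{p}$-quotient of ${\rm Br}(\overline{X})[p^{\infty}]^{\lor}$, which is $\mathbb{Z}_{p}$-free of rank at most $22$.

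\textbf{Main obstacle.} The step I expect to be hardest is making the torsion-ness argument precise. The $\mathbb{Z}_{p}$-rank bound via étale cohomology is the cleanest way to do it, and I would use it as the primary argument, with the Nakayama argument kept as a backup for finite generation.
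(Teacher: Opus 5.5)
Your proposal is correct, and your primary argument differs from the paper's.

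Your first route is essentially the paper's proof. There, Lemma \ref{geoquo} for $n=0$ and Theorem \ref{ls} make $M/TM$ finite, and Nakayama's lemma plus a standard criterion from Ochiai then give that $M$ is finitely generated and torsion. You exclude a free part because $\Lambda^{r}/T\Lambda^{r}\cong\mathbb{Z}_{p}^{r}$ would survive in $M/TM$ up to finite error. You do not need the snake lemma for this: by right-exactness of $-\otimes_{\Lambda}\Lambda/(T)$, the cokernel of the induced map on $T$-coinvariants is a quotient of the finite cokernel of the pseudo-isomorphism.

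The route you call primary is different. Dualizing ${\rm Br}(\overline{X})^{G^{\infty}}[p^{\infty}]\subset{\rm Br}(\overline{X})[p^{\infty}]$ exhibits $M$ as a quotient of ${\rm Br}(\overline{X})[p^{\infty}]^{\lor}\cong T_{p}{\rm Br}(\overline{X})^{\ast}\cong\mathbb{Z}_{p}^{22-\rho(\overline{X})}$. It is a quotient, not a submodule as your phrase ``quotient-free submodule'' suggests; your second formulation is the right one. So $M$ is finitely generated over $\mathbb{Z}_{p}$, hence over $\Lambda$. Each cyclic submodule $\Lambda/{\rm Ann}(m)$ is then finitely generated over $\mathbb{Z}_{p}$, which forces ${\rm Ann}(m)\neq 0$.

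\textbf{What your route buys.} It does not use Theorem \ref{ls}, so this proposition holds without the Tate conjecture. It also proves more. A torsion $\Lambda$-module that is finitely generated over $\mathbb{Z}_{p}$ cannot be pseudo-isomorphic to anything containing $\Lambda/(p^{m})$ with $m>0$. Hence you get immediately
\begin{equation*}
\mu(M)=0,\qquad \lambda(M)={\rm rank}_{\mathbb{Z}_{p}}M\leq 22-\rho(\overline{X}).
\end{equation*}
This is a module-theoretic proof of Theorem \ref{mu0}, which transfers to ${\rm Br}(X_{\infty})[p^{\infty}]^{\lor}$ via Theorem \ref{geoarith}.

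\textbf{What the paper's route buys.} It passes through finiteness of the quotients $M/(\gamma^{p^{n}}-1)M$, which is exactly the hypothesis of Theorem \ref{moduiwasawa} used later. That finiteness still needs Theorem \ref{ls}, even if you prove the present proposition by the $\mathbb{Z}_{p}$-rank argument.
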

\begin{proof}
In Lemma \ref{geoquo} for $n=0$, we have an isomorphism of $\Lambda/(\gamma-1)\Lambda$-modules \[{\rm Br}(\overline{X})^{G^{\infty}}[p^{\infty}]^{\lor}/(\gamma-1){\rm Br}(\overline{X})^{G^{\infty}}[p^{\infty}]^{\lor}\xrightarrow{\sim} {\rm Br}(\overline{X})^{G}[p^{\infty}]^{\lor}.
\]From Theorem \ref{ls}, the group ${\rm Br}(\overline{X})^{G}[p^{\infty}]$ is a finite group, and so its dual is also finite. Since $\gamma-1$ is an irreducible element in $\Lambda$,  Nakayama's lemma (for example, see  \cite[Corollary 2.26]{o1}) and \cite[Corollary 2.40]{o1} imply that the Iwasawa module ${\rm Br}(\overline{X})^{G^{\infty}}[p^{\infty}]^{\lor}$ is a finitely generated $\Lambda$-module. 
\end{proof} 
\begin{rem}
    We find that the equalities 
    \begin{equation}
        G^{n}=\{g^{p^{n}}~|~g\in G\}, ~\Gamma^{n}=\{g^{p^{n}}~|~ g\in \Gamma\}
    \end{equation}
    hold. For this reason, we use the superscript notations $G^{n}$ and $\Gamma^{n}$. 
\end{rem}
\section{Control Theorem}\label{control theorem}
In this section, we prove a K3 surface analogue of Mazur's control theorem (Theorem \ref{cont0}). 
\subsection{Tate modules} In this subsection, we define the L-function $L_{{\rm tr}}(X, t)$ mentioned in the introduction using the notion of  Tate modules. 
\begin{defi}\label{tatemodule}(Tate Modules). Let $X$ be a K3 surface over $\mathbb{F}_{q}$. 
For a prime $p\notmid q$, the \emph{Tate module} is the inverse limit
\begin{align}
T_{p}{\rm Br}(\overline{X}):=\varprojlim_{n}{\rm Br}(\overline{X})[p^{n}],
\end{align}
which is a free $\mathbb{Z}_{p}$-module. 
\end{defi}
Let $X$ and $p$ be as in Definition \ref{tatemodule}. For $n\geq 0$, the  Kummer exact sequence $0\to \mu_{p^{n}}\to \mathbb{G}_{m}\xrightarrow{p^{n}} \mathbb{G}_{m}\to 0$ yields a short exact sequence of $G={\rm Gal}(\overline{\mathbb{F}}_{q}/\mathbb{F}_{q})$-modules 
\begin{equation}\label{kummer exact}
0\to {\rm Pic}(\overline{X})\otimes\mathbb{Z}/p^{n}\mathbb{Z}\to H^{2}_{\text{\'et}}(\overline{X}, \mu_{p^{n}})\to {\rm Br}(\overline{X})[p^{n}]\to 0,
\end{equation}
where $\mu_{p^{n}}$ is the \'etale sheaf over $X$ defined by $U\mapsto \{\alpha\in \mathcal{O}_{U}(U)~|~\alpha^{p^{n}}=1\}$ for an \'etale morphism $U\to X$. Since the inverse system $({\rm Pic}(\overline{X})\otimes \mathbb{Z}/p^{n}\mathbb{Z})_{n\geq 0}$ satisfies the Mittag-Leffler condition (\cite[Section 10.86, Example 10.86.2]{stk}), we have 
\begin{equation}\label{tate exact}
0\to {\rm Pic}(\overline{X})\otimes\mathbb{Z}_{p}\to H^{2}_{\text{\'et}}(\overline{X}, \mathbb{Z}_{p}(1))\to T_{p}{\rm Br}(\overline{X})\to 0.
\end{equation}
In particular, if $\rho(\overline{X})$ is the Picard number of $\overline{X}$, then $T_{p}{\rm Br}(\overline{X})\cong\mathbb{Z}_{p}^{22-\rho(\overline{X})}$.
\begin{prop}\label{brtate} There exists a canonical $G$-isomorphism
\begin{equation}
{\rm Br}(\overline{X})[p^{\infty}]\cong T_{p}{\rm Br}(\overline{X})\otimes_{\mathbb{Z}_{p}}\mathbb{Q}_{p}/\mathbb{Z}_{p}.
\end{equation}
\end{prop}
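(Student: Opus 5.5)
The plan is to build the isomorphism directly from the definition of the Tate module and then show it is bijective using only two facts: ${\rm Br}(\overline{X})[p^{\infty}]$ is divisible, and its $p$-torsion is finite. Concretely, we define the pairing
\[
T_{p}{\rm Br}(\overline{X})\times \mathbb{Q}_{p}/\mathbb{Z}_{p}\to {\rm Br}(\overline{X})[p^{\infty}],\qquad \bigl((x_{m})_{m},\, a/p^{n}\bigr)\mapsto a\,x_{n},
\]
where $(x_{m})$ is a compatible system with $x_{m}\in {\rm Br}(\overline{X})[p^{m}]$ and $px_{m+1}=x_{m}$. This is well defined: replacing $a/p^{n}$ by $pa/p^{n+1}$ gives $pa\,x_{n+1}=a\,x_{n}$. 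It is $\mathbb{Z}_{p}$-bilinear, so it induces a homomorphism $\Phi$ on the tensor product. It is $G$-equivariant, because $G$ acts on ${\rm Br}(\overline{X})$ by group automorphisms commuting with multiplication by $p$, and acts trivially on $\mathbb{Q}_{p}/\mathbb{Z}_{p}$. No choices are involved, so $\Phi$ is canonical.

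To prove $\Phi$ is bijective, we first establish the structure of $B:={\rm Br}(\overline{X})[p^{\infty}]$.
\begin{itemize}
\item \emph{$B$ is divisible.} Take the long exact sequence of the Kummer sequence on $\overline{X}$ with $\mu_{p^{n}}$ replaced by $\mathbb{G}_m$ in degree $3$. Multiplication by $p$ on ${\rm Br}(\overline{X})$ has cokernel injecting into $H^{3}_{\text{\'et}}(\overline{X},\mu_{p})$. By Poincar\'e duality this group is dual to $H^{1}_{\text{\'et}}(\overline{X},\mu_{p})$, which vanishes because a K3 surface has ${\rm Pic}^{0}=0$ and trivial $p$-torsion in ${\rm Pic}$ (equivalently, $b_{1}=0$ and $H^{3}(\overline{X},\mathbb{Z}_{p})$ is torsion-free). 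Hence ${\rm Br}(\overline{X})$ is $p$-divisible, and so is $B$.
\item \emph{$B[p]$ is finite.} This follows from \eqref{kummer exact} with $n=1$, since $H^{2}_{\text{\'et}}(\overline{X},\mu_{p})$ is finite.
\end{itemize}
A divisible $p$-primary group with finite $p$-torsion is isomorphic to $(\mathbb{Q}_{p}/\mathbb{Z}_{p})^{r}$, and comparing with \eqref{tate exact} gives $r=22-\rho(\overline{X})$.

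For such a group the map $\Phi$ is an isomorphism. With $T_{p}B\cong\mathbb{Z}_{p}^{r}$, the map reduces coordinatewise to the standard identification $\mathbb{Z}_{p}\otimes\mathbb{Q}_{p}/\mathbb{Z}_{p}\cong \mathbb{Q}_{p}/\mathbb{Z}_{p}$, $(1/p^{m})_{m}\otimes a/p^{n}\mapsto a/p^{n}$. One can also argue without coordinates, since both sides are compatible unions over $n$:
\begin{itemize}
\item On the left, $T_{p}B\otimes p^{-n}\mathbb{Z}_{p}/\mathbb{Z}_{p}=T_{p}B/p^{n}$.
\item On the right, the relevant piece is $B[p^{n}]$.
\item The map $T_{p}B/p^{n}\to B[p^{n}]$ is the projection $(x_{m})\mapsto x_{n}$. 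It is surjective by divisibility, since every element of $B[p^{n}]$ lifts to a compatible system. Its kernel is exactly $p^{n}T_{p}B$.
\end{itemize}
Passing to the direct limit over $n$ gives the isomorphism.

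The main obstacle is the divisibility step, specifically the vanishing of $p$-torsion in $H^{3}$ (equivalently in ${\rm NS}$ or $H^{1}$) for a K3 surface. If that argument proves awkward, one can instead tensor \eqref{tate exact} with $\mathbb{Q}_{p}/\mathbb{Z}_{p}$. This uses $H^{2}_{\text{\'et}}(\overline{X},\mathbb{Z}_{p}(1))\otimes\mathbb{Q}_{p}/\mathbb{Z}_{p}=H^{2}_{\text{\'et}}(\overline{X},\mathbb{Q}_{p}/\mathbb{Z}_{p}(1))$, which holds because $H^{3}(\overline{X},\mathbb{Z}_{p}(1))$ is torsion-free for a K3 surface. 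It then compares the result with the colimit of \eqref{kummer exact}, using that ${\rm Pic}(\overline{X})\otimes\mathbb{Q}_{p}/\mathbb{Z}_{p}$ is the image of the Picard part.
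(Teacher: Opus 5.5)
Your proof is correct. Your main route differs from the paper's, while the fallback in your last paragraph is essentially what the paper does.

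\textbf{The paper's route.} The paper never writes down an explicit map. It first proves $H^{2}_{\text{\'et}}(\overline{X},\mathbb{Z}_{p}(1))\otimes\mathbb{Z}/p^{m}\cong H^{2}_{\text{\'et}}(\overline{X},\mu_{p^{m}})$, using the vanishing of $H^{1}_{\text{\'et}}(\overline{X},\mu_{p^{m}})$ and $H^{3}_{\text{\'et}}(\overline{X},\mu_{p^{n}})$ together with Mittag-Leffler. This rewrites \eqref{kummer exact} as $0\to({\rm Pic}(\overline{X})\otimes\mathbb{Z}_{p})/p^{n}\to H^{2}_{\text{\'et}}(\overline{X},\mathbb{Z}_{p}(1))/p^{n}\to{\rm Br}(\overline{X})[p^{n}]\to 0$. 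It then passes to the direct limit and identifies the resulting cokernel with $T_{p}{\rm Br}(\overline{X})\otimes\mathbb{Q}_{p}/\mathbb{Z}_{p}$ by tensoring \eqref{tate exact} with $\mathbb{Q}_{p}/\mathbb{Z}_{p}$.

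\textbf{Your route.} You isolate the single geometric input, the $p$-divisibility of ${\rm Br}(\overline{X})$. This is the same vanishing $H^{3}_{\text{\'et}}(\overline{X},\mu_{p})=0$ that the paper simply asserts, and you justify it via Poincar\'e duality and torsion-freeness of ${\rm Pic}(\overline{X})$. The rest is a purely algebraic fact: for any $p$-divisible $p$-primary group $B$, evaluation $T_{p}B\otimes\mathbb{Q}_{p}/\mathbb{Z}_{p}\to B$ is bijective. Your coordinate-free argument in fact uses only divisibility. The finiteness of $B[p]$ and the value $r=22-\rho(\overline{X})$ are extra information and are not needed for bijectivity.

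\textbf{What each buys.} Your explicit formula makes canonicity and $G$-equivariance immediate. It also sidesteps bookkeeping the paper leaves implicit. First, the inclusions ${\rm Br}(\overline{X})[p^{n}]\subset{\rm Br}(\overline{X})[p^{n+1}]$ correspond to multiplication by $p$ on $H^{2}_{\text{\'et}}(\overline{X},\mathbb{Z}_{p}(1))/p^{n}$. Second, the two maps out of $({\rm Pic}(\overline{X})\otimes\mathbb{Z}_{p})\otimes\mathbb{Q}_{p}/\mathbb{Z}_{p}$ (the direct limit of the Kummer sequences and \eqref{tate exact} tensored with $\mathbb{Q}_{p}/\mathbb{Z}_{p}$) must agree. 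The paper's route instead exhibits ${\rm Br}(\overline{X})[p^{\infty}]$ directly as a quotient of $H^{2}_{\text{\'et}}(\overline{X},\mathbb{Z}_{p}(1))\otimes\mathbb{Q}_{p}/\mathbb{Z}_{p}$, compatibly with \eqref{tate exact}. It also yields the comparison $H^{2}_{\text{\'et}}(\overline{X},\mathbb{Z}_{p}(1))/p^{m}\cong H^{2}_{\text{\'et}}(\overline{X},\mu_{p^{m}})$ as a by-product.
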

\begin{proof} For each $m\geq 0$, we first show that the projection $H^{2}_{\text{\'et}}(\overline{X}, \mathbb{Z}_{p}(1))\to H^{2}_{\text{\'et}}(\overline{X}, \mu_{p^{m}})$ induces a natural $G$-isomorphism
\begin{equation}\label{uni}
H^{2}_{\text{\'et}}(\overline{X}, \mathbb{Z}_{p}(1))\otimes_{\mathbb{Z}_{p}}\mathbb{Z}_{p}/p^{m}\mathbb{Z}_{p}\xrightarrow{\sim}H^{2}_{\text{\'et}}(\overline{X}, \mu_{p^{m}}).
\end{equation}
Fix $n\geq 0$. Then there exists an exact sequence of \'etale sheaves on $X$:
\begin{equation}
0\to \mu_{p^n}\to \mu_{p^{m+n}}\xrightarrow{(-)^{p^{n}}}\mu_{p^m}\to 0.
\end{equation}
Hence we have a long exact sequence of $G$-modules:
\begin{equation}
H^{1}_{\text{\'et}}(\overline{X}, \mu_{p^m})\to H^{2}_{\text{\'et}}(\overline{X}, \mu_{p^n})\to H^{2}_{\text{\'et}}(\overline{X}, \mu_{p^{m+n}})\to H^{2}_{\text{\'et}}(\overline{X}, \mu_{p^m})\to H^{3}_{\text{\'et}}(\overline{X}, \mu_{\blue{p^n}})\to\cdots.
\end{equation}
Since $H^{1}_{\text{\'et}}(\overline{X}, \mu_{p^m})=0$ and $H^{3}_{\text{\'et}}(\overline{X}, \mu_{p^n})=0$, we have an exact sequence:
\begin{equation}
0\to H^{2}_{\text{\'et}}(\overline{X}, \mu_{p^n})\to H^{2}_{\text{\'et}}(\overline{X}, \mu_{p^{m+n}})\to H^{2}_{\text{\'et}}(\overline{X}, \mu_{p^m})\to 0.
\end{equation}
In particular, the inverse system $(H^{2}_{\text{\'et}}(\overline{X}, \mu_{p^n}))_{n\geq 0}$ is Mittag-Leffler (\cite[Section 10.86, Example 10.86.2]{stk}). Hence we have the following exact sequence:
\begin{equation}\label{uni1}
0\to H^{2}_{\text{\'et}}(\overline{X}, \mathbb{Z}_{p}(1))\xrightarrow{p^{m}}H^{2}_{\text{\'et}}(\overline{X}, \mathbb{Z}_{p}(1))\to H^{2}_{\text{\'et}}(\overline{X}, \mu_{p^m})\to 0, 
\end{equation}
where the homomorphism $H^{2}_{\text{\'et}}(\overline{X}, \mathbb{Z}_{p}(1))\xrightarrow{p^{m}}H^{2}_{\text{\'et}}(\overline{X}, \mathbb{Z}_{p}(1))$ is the composition of $H^{2}_{\text{\'et}}(\overline{X}, \mathbb{Z}_{p}(1))=\varprojlim_{n}H^{2}_{\text{\'et}}(\overline{X}, \mu_{p^{n}})\to \varprojlim_{n}H^{2}_{\text{\'et}}(\overline{X}, \mu_{p^{m+n}})$ and $\varprojlim_{n}H^{2}_{\text{\'et}}(\overline{X}, \mu_{p^{m+n}})\cong H^{2}_{\text{\'et}}(\overline{X}, \mathbb{Z}_{p}(1))$. The isomorphism \eqref{uni} follows from the exact sequence \eqref{uni1}. 

Using the Kummer sequence \eqref{kummer exact} and \eqref{uni1}, we have an exact sequence of $G$-modules:
\begin{equation}
0\to ({\rm Pic}(\overline{X})\otimes \mathbb{Z}_{p})\otimes_{\mathbb{Z}_{p}}\mathbb{Z}_{p}/p^{n}\mathbb{Z}_{p}\to H^{2}_{\text{\'et}}(\overline{X}, \mathbb{Z}_{p}(1))\otimes_{\mathbb{Z}_{p}}\mathbb{Z}_{p}/p^{n}\mathbb{Z}_{p}\to {\rm Br}(\overline{X})[p^{n}]\to 0.
\end{equation}
Taking the direct limit for $n\geq 0$, we have
\begin{equation}
0\to ({\rm Pic}(\overline{X})\otimes \mathbb{Z}_{p})\otimes_{\mathbb{Z}_{p}}\mathbb{Q}_{p}/\mathbb{Z}_{p}\to H^{2}_{\text{\'et}}(\overline{X}, \mathbb{Z}_{p}(1))\otimes_{\mathbb{Z}_{p}}\mathbb{Q}_{p}/\mathbb{Z}_{p}\to {\rm Br}(\overline{X})[p^{\infty}]\to 0,
\end{equation}
that is, using \eqref{tate exact}, ${\rm Br}(\overline{X})[p^{\infty}]\cong T_{p}{\rm Br}(\overline{X})\otimes_{\mathbb{Z}_{p}}\mathbb{Q}_{p}/\mathbb{Z}_{p}$.
\end{proof}
For every $n\geq 0$, the action of $G$ on $T_{p}{\rm Br}(\overline{X})^{G^{n}}$ induces the action of $\Gamma_{n}\cong G/G^{n}$. On the other hand, let $T_{p}{\rm Br}(\overline{X})^{\ast}:={\rm Hom}_{\mathbb{Z}_{p}}(T_{p}{\rm Br}(\overline{X}), \mathbb{Z}_{p})$ denote the dual. Then the natural action of $G$ on $T_{p}{\rm Br}(\overline{X})^{\ast}$ induces a continuous action on ${\rm coker}(1-F^{p^{n}}: T_{p}{\rm Br}(\overline{X})^{\ast}\to T_{p}{\rm Br}(\overline{X})^{\ast})$. Since $x=F^{p^{n}}x$ in ${\rm coker}(1-F^{p^{n}}: T_{p}{\rm Br}(\overline{X})^{\ast}\to T_{p}{\rm Br}(\overline{X})^{\ast})$, $G^{n}$ acts on ${\rm coker}(1-F^{p^{n}}: T_{p}{\rm Br}(\overline{X})^{\ast}\to T_{p}{\rm Br}(\overline{X})^{\ast})$ trivially. Hence the $G$-action induces the  $\Gamma_{n}$-action. For the same reason, the $G$-module ${\rm coker}(1-F^{p^{n}}:{\rm Br}(\overline{X})[p^{\infty}]^{\lor}\to {\rm Br}(\overline{X})[p^{\infty}]^{\lor})$ also has the structure of $\Gamma_{n}$-module.
\begin{prop}\label{dualtate}  There exists a canonical $\Gamma_{n}$-isomorphism
\begin{equation}
{\rm Br}(\overline{X})^{G^{n}}[p^{\infty}]^{\lor}\cong {\rm coker}(1-F^{p^{n}}: T_{p}{\rm Br}(\overline{X})^{\ast}\to T_{p}{\rm Br}(\overline{X})^{\ast}).
\end{equation}
\end{prop}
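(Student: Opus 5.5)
The strategy is to dualize Proposition \ref{brtate}. Write $T:=T_{p}{\rm Br}(\overline{X})\cong\mathbb{Z}_{p}^{r}$ with $r=22-\rho(\overline{X})$, and put $D:={\rm Br}(\overline{X})[p^{\infty}]\cong T\otimes_{\mathbb{Z}_{p}}\mathbb{Q}_{p}/\mathbb{Z}_{p}$ ($G$-equivariantly).

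First I identify the Pontryagin dual of $D$. Consider the pairing
\[
T^{\ast}\times (T\otimes\mathbb{Q}_{p}/\mathbb{Z}_{p})\to\mathbb{Q}_{p}/\mathbb{Z}_{p},\qquad (\phi, t\otimes x)\mapsto \phi(t)x .
\]
It induces a map $T^{\ast}\to D^{\lor}$. Since $T$ is free of finite rank, this map is an isomorphism; one checks this coordinatewise using $(\mathbb{Q}_{p}/\mathbb{Z}_{p})^{\lor}=\mathbb{Z}_{p}$. The isomorphism is $G$-equivariant for the contragredient actions $(g\phi)(t)=\phi(g^{-1}t)$ and $(gf)(\alpha)=f(g^{-1}\alpha)$, because the pairing is $G$-invariant. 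Under this identification, $1-F^{p^{n}}$ on $T^{\ast}$ corresponds to $1-F^{p^{n}}$ on $D^{\lor}$. Hence
\[
{\rm coker}(1-F^{p^{n}}\text{ on }T^{\ast})\cong{\rm coker}(1-F^{p^{n}}\text{ on }D^{\lor}),
\]
and this isomorphism respects the induced $\Gamma_{n}$-structures.

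Second, I compute the invariants of $D$ under $G^{n}$. The group $G^{n}$ is procyclic, topologically generated by $F^{p^{n}}$, and $D$ is a discrete $G$-module: every element is fixed by an open subgroup, since $D$ comes from a finite-level cohomology. So, as in the proof of Lemma \ref{geoquo},
\[
D^{G^{n}}={\rm ker}(1-F^{p^{n}}\colon D\to D),
\]
and this gives an exact sequence $0\to D^{G^{n}}[p^{\infty}]\to D\xrightarrow{1-F^{p^{n}}}D$. Here ${\rm Br}(\overline{X})^{G^{n}}[p^{\infty}]=D^{G^{n}}$. Pontryagin duality is exact on discrete torsion groups, so dualizing yields
\[
D^{\lor}\xrightarrow{(1-F^{p^{n}})^{\lor}}D^{\lor}\to (D^{G^{n}})^{\lor}\to 0 .
\]

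Third, I identify the dual map $(1-F^{p^{n}})^{\lor}$. With the contragredient action, the transpose of $F^{p^{n}}$ is $F^{-p^{n}}$. So the first map is $1-F^{-p^{n}}=-F^{-p^{n}}(1-F^{p^{n}})$, which differs from $1-F^{p^{n}}$ by an automorphism. The two maps therefore have the same image, and hence the same cokernel. Combining this with the first step gives the stated isomorphism.

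Finally I check $\Gamma_{n}$-equivariance. All maps above are $G$-equivariant, and on both sides $G^{n}$ acts trivially, so they descend to $\Gamma_{n}=G/G^{n}$. Canonicity holds because no choices were made beyond Proposition \ref{brtate}, which is itself canonical.

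I expect the main obstacle to be the bookkeeping in the third step: keeping track of which action (contragredient versus transpose) is used on the duals, and noting that this discrepancy is harmless because the two maps differ by an automorphism. The first step also needs a careful argument that $T^{\ast}\to D^{\lor}$ is an isomorphism, including continuity, though for finite free $T$ this is routine.
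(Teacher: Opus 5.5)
Your proposal is correct and follows essentially the same route as the paper. Both arguments identify ${\rm Br}(\overline{X})[p^{\infty}]^{\lor}$ with $T_{p}{\rm Br}(\overline{X})^{\ast}$ $G$-equivariantly via Proposition \ref{brtate}, and then dualize the sequence exhibiting ${\rm Br}(\overline{X})^{G^{n}}[p^{\infty}]$ as a kernel, using divisibility of $\mathbb{Q}_{p}/\mathbb{Z}_{p}$ for exactness. The only difference is bookkeeping: the paper applies $1-F^{-p^{n}}$ on the discrete side, so its dual is literally $1-F^{p^{n}}$, whereas you dualize $1-F^{p^{n}}$ and absorb the automorphism $-F^{-p^{n}}$. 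The two are equivalent.
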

\begin{proof} 
By Proposition \ref{brtate}, we have an isomorphism of $G$-modules:
\begin{equation}
    {\rm Br}(\overline{X})[p^{\infty}]^{\lor}\cong (T_{p}{\rm Br}(\overline{X})\otimes_{\mathbb{Z}_{p}}\mathbb{Q}_{p}/\mathbb{Z}_{p})^{\lor}\cong T_{p}{\rm Br}(\overline{X})^{\ast}.
\end{equation}
Therefore, there is a canonical isomorphism of $\Gamma_{n}$-modules:
\begin{equation}
{\rm coker}(1-F^{p^{n}}:{\rm Br}(\overline{X})[p^{\infty}]^{\lor}\to {\rm Br}(\overline{X})[p^{\infty}]^{\lor})\cong{\rm coker}(1-F^{p^{n}}: T_{p}{\rm Br}(\overline{X})^{\ast}\to T_{p}{\rm Br}(\overline{X})^{\ast}).
\end{equation}
Since the $\mathbb{Z}$-module $\mathbb{Q}_{p}/\mathbb{Z}_{p}$ is divisible, the exact sequence of $G$-modules
\begin{equation}
    0\to {\rm Br}(\overline{X})^{G^{n}}[p^{\infty}]\to {\rm Br}(\overline{X})[p^{\infty}]\xrightarrow{1-F^{-p^{n}}} (1-F^{-p^{n}}){\rm Br}(\overline{X})[p^{\infty}]\to 0
\end{equation}
induces the exact sequence
\begin{equation}
    0\to (1-F^{p^{n}}){\rm Br}(\overline{X})[p^{\infty}]^{\lor}\to {\rm Br}(\overline{X})[p^{\infty}]^{\lor}\to {\rm Br}(\overline{X})^{G^{n}}[p^{\infty}]^{\lor}\to0.
\end{equation}
Thus 
\begin{equation}
    {\rm coker}(1-F^{p^{n}}:{\rm Br}(\overline{X})[p^{\infty}]^{\lor}\to {\rm Br}(\overline{X})[p^{\infty}]^{\lor})\cong {\rm Br}(\overline{X})^{G^{n}}[p^{\infty}]^{\lor}. 
\end{equation}
\end{proof}
\begin{defi}[The L-function of $T_{p}{\rm Br}(\overline{X})$]\label{Ltr} Let $X$ be a K3 surface over $\mathbb{F}_{q}$. Let $p\notmid q$ be a prime. The \emph{L-function} of $T_{p}{\rm Br}(\overline{X})$ is the polynomial 
\begin{equation}
L_{{\rm tr}}(X, t):={\rm det}(1-F^{-1}t: T_{p}{\rm Br}(\overline{X}))\in \mathbb{Z}_{p}[t].
\end{equation}
\end{defi}
\begin{cor}\label{geoeigen} We have
\begin{equation}
\#{\rm Br}(\overline{X})^{G^{n}}[p^{\infty}]=p^{v_{p}(L_{{\rm tr}}(X_{n}, 1))}.
\end{equation}
\end{cor}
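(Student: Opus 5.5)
We combine Proposition \ref{dualtate} with the elementary fact that the order of the cokernel of an endomorphism of a finite free $\mathbb{Z}_{p}$-module is $p$ raised to the valuation of its determinant.

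\textbf{Step 1.} Since ${\rm Br}(\overline{X})^{G^{n}}[p^{\infty}]$ is finite (Theorem \ref{ls}), its Pontryagin dual has the same order. Proposition \ref{dualtate} then gives
\[
\#{\rm Br}(\overline{X})^{G^{n}}[p^{\infty}]=\#{\rm coker}(1-F^{p^{n}}: T_{p}{\rm Br}(\overline{X})^{\ast}\to T_{p}{\rm Br}(\overline{X})^{\ast}).
\]

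\textbf{Step 2.} Set $M:=T_{p}{\rm Br}(\overline{X})^{\ast}\cong\mathbb{Z}_{p}^{r}$ and $\phi:=1-F^{p^{n}}$. Its cokernel is finite by Step 1, so $\det\phi\neq0$. By the Smith normal form over the PID $\mathbb{Z}_{p}$, $\phi$ is equivalent to ${\rm diag}(p^{a_{1}},\dots,p^{a_{r}})$, so that
\[
\#{\rm coker}\,\phi=p^{\sum a_{i}}=p^{v_{p}(\det\phi)}.
\]

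\textbf{Step 3.} We identify $\det\phi$ with $L_{\rm tr}(X_{n},1)$. Here $X_{n}$ is a K3 surface over $\mathbb{F}_{q^{p^{n}}}$ with the same $\overline{X}$, and its arithmetic Frobenius is $F^{p^{n}}$. Hence
\[
L_{\rm tr}(X_{n},1)=\det(1-F^{-p^{n}}\mid T_{p}{\rm Br}(\overline{X})).
\]
The $G$-action on the dual $M$ is contragredient: $g$ acts on $f\in M$ by $f\circ g^{-1}$. Therefore $F^{p^{n}}$ acting on $M$ is the transpose of $F^{-p^{n}}$ acting on $T_{p}{\rm Br}(\overline{X})$. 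Since the determinant is invariant under transpose,
\[
\det(1-F^{p^{n}}\mid M)=\det(1-F^{-p^{n}}\mid T_{p}{\rm Br}(\overline{X}))=L_{\rm tr}(X_{n},1).
\]
Combining Steps 1–3 gives the claimed formula.

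The main obstacle is the convention check in Step 3. We must verify that the isomorphism ${\rm Br}(\overline{X})[p^{\infty}]^{\lor}\cong T_{p}{\rm Br}(\overline{X})^{\ast}$ in the proof of Proposition \ref{dualtate} matches the contragredient action, which is the action used when the dual sequence with $1-F^{-p^{n}}$ was taken there. If it did not match, we would get $\det(1-F^{p^{n}}\mid T_{p}{\rm Br}(\overline{X}))$ instead. Even then the $p$-adic valuation is unchanged. Indeed, $\det(1-A^{-1})=\det(-A^{-1})\det(1-A)$, and $\det F$ is a unit in $\mathbb{Z}_{p}$ because $F$ acts by an automorphism of a finite free $\mathbb{Z}_{p}$-module. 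So the formula holds under either convention.
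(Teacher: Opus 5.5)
Your proof is correct and follows the paper's argument: the paper also combines Proposition~\ref{dualtate} with the fact that the cokernel of $1-F^{p^{n}}$ on $T_{p}{\rm Br}(\overline{X})^{\ast}$ has order $p^{v_{p}(\det(1-F^{p^{n}}))}$, and it identifies this determinant with $L_{\rm tr}(X_{n},1)$. The only differences are that the paper cites \cite[Proposition 4.3]{tu} (in the language of characteristic ideals) where you use the Smith normal form, and that you spell out the transpose identification, and its convention-independence, which the paper uses without comment.
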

\begin{proof}
Let $\#({\rm Br}(\overline{X})^{G^{n}}[p^{\infty}])=p^{m}, (m\geq 0)$. Then the characteristic ideal of the $\mathbb{Z}_{p}$-module ${\rm Br}(\overline{X})^{G_{n}}[p^{\infty}]$ is $(p^{m})$ by \eqref{simpchar}. By Proposition \ref{dualtate} and \cite[Proposition 4.3]{tu}, we have 
\begin{equation}
(p^{m})=({\rm det}(1-F^{p^{n}}: T_{p}{\rm Br}(\overline{X})^{\ast}))=(L_{{\rm tr}}(X_{n}, 1)).
\end{equation}
In particular, $m=v_{p}(p^{m})=v_{p}(L_{{\rm tr}}(X_{n}, 1))$.
\end{proof}
\subsection{The proof of the control theorem}
\begin{defi}[L-functions]\label{lfunction} The \emph{L-function} of $X/\mathbb{F}_{q}$ is the polynomial
\begin{equation} 
L(X, t):={\rm det}(1-F^{-1}t: H^{2}_{\text{\'et}}(\overline{X}, \mathbb{Q}_{p}(1)))\in \mathbb{Z}_{p}[t].
\end{equation}
\end{defi}
\begin{rem}
    
We briefly explain the relation between $L(X,t)$ and the zeta function $Z(X, t)$ of $X$. Since $F^{-1}$ acts on $\mathbb{Q}_p(1)$ as multiplication by $q^{-1}$, we have
\[
L(X, qt)={\rm det}\left(1-F^{-1}t:H^2_{\text{\'et}}(\overline{X},\mathbb{Q}_p)\right)\in\mathbb{Z}_p[t].
\]
For a K3 surface $X$, we have $H^{1}_{\text{\'et}}(\overline{X}, \mathbb{Q}_{p})=H^{3}_{\text{\'et}}(\overline{X}, \mathbb{Q}_{p})=0$. 
Thus, the rationality of the zeta function (for example, see \cite[Theorem 12.4]{milne}) says
\[
Z(X,t)=\frac{1}{(1-t)L(X, qt)(1-q^2t)}\in\mathbb{Q}_p(t).
\]
\indent Moreover, by the exact sequence \eqref{tate exact}, we have a decomposition of polynomials in $\mathbb{Z}_{p}[t]$
\begin{equation}L(X, t)={\rm det}(1-F^{-1}t: {\rm NS}(\overline{X})\otimes\mathbb{Q}_{p})\cdot L_{{\rm tr}}(X, t).\end{equation}
 By the Tate conjecture for K3 surfaces, the first factor ${\rm det}(1-F^{-1}t: {\rm NS}(\overline{X})\otimes\mathbb{Q}_{p})$ is precisely  the factor of $L(X, t)$ corresponding to the roots of unity.  Consequently, $L_{{\rm tr}}(X,t)$  is the factor corresponding to the roots that are not roots of unity.
\end{rem}
Fix an algebraic closure $\overline{\mathbb{Q}}_{p}$ of  $\mathbb{Q}_{p}$. Let $\beta_{1}, \cdots, \beta_{22}\in \overline{\mathbb{Q}}$ be the eigenvalues of the linear map $F^{-1}: H^{2}_{\text{\'et}}(\overline{X}, \mathbb{Q}_{p}(1))\to H^{2}_{\text{\'et}}(\overline{X}, \mathbb{Q}_{p}(1))$. Then we have 
\begin{equation}\label{eigen}
L(X, t)=\prod^{22}_{i=1}(1-\beta_{i}t).
\end{equation}
\begin{thm}[Artin--Tate conjecture for K3 surfaces]\label{artintate} Let $X/\mathbb{F}_{q}$ be a K3 surface. Then $\#{\rm Br}(X)$ is finite and 
\begin{equation}
\#{\rm Br}(X)\cdot|{\rm disc}\, {\rm NS}(X)|= q\cdot \prod_{i: \beta_{i}\neq1}(1-\beta_i),
\end{equation}
where ${\rm disc}\, {\rm NS}(X)$ is the discriminant of the lattice ${\rm NS}(X)$. 
\end{thm}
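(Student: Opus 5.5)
The Artin--Tate formula for K3 surfaces is a known theorem, so I will not reprove it. I will derive the stated form from the general Artin--Tate conjecture for surfaces over finite fields together with the Tate conjecture for K3 surfaces (Madapusi Pera, Kim--Madapusi Pera).

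\emph{Step 1 (general formula).} For a smooth projective surface $X/\mathbb{F}_q$ with finite ${\rm Br}(X)$, Milne's theorem (building on Tate) gives the Artin--Tate formula
\[
P_2(X,q^{-s}) \sim \frac{\#{\rm Br}(X)\cdot|{\rm disc}\,{\rm NS}(X)|}{q^{\alpha(X)}\,(\#{\rm NS}(X)_{\rm tors})^2}\,(1-q^{1-s})^{\rho(X)} \quad (s\to 1),
\]
where $P_2(X,t)=\det(1-Ft:H^2_{\text{\'et}}(\overline{X},\mathbb{Q}_p))$ is taken with the geometric Frobenius convention. The invariants are
\[
\alpha(X)=\chi(X,\mathcal{O}_X)-1+\dim {\rm PicVar}(X).
\]
Milne also shows that the formula holds for $\ell=p$ whenever the Tate conjecture holds for a single prime $\ell$. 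Since the Tate conjecture is known for K3 surfaces, Theorem \ref{tate conjecture for k3} and its proof supply both the finiteness of ${\rm Br}(X)$ and the full formula.

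\emph{Step 2 (K3 invariants).} For a K3 surface, $\chi(\mathcal{O}_X)=2$, the Picard variety is trivial, and ${\rm NS}(X)$ is torsion-free, since $H^1(\mathcal{O}_X)=0$ and $\pi_1$ is trivial. Hence $\alpha(X)=1$ and the torsion factor equals $1$.

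\emph{Step 3 (translate the leading coefficient).} Write $L(X,t)=\prod(1-\beta_i t)$, where the $\beta_i$ are the eigenvalues of $F^{-1}$ on $H^2(\mathbb{Q}_p(1))$. Then $P_2(X,q^{-s})=\prod_i(1-\beta_i q^{1-s})$, because the eigenvalues on $H^2(\mathbb{Q}_p)$ are $q\beta_i$.

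The factors with $\beta_i=1$ contribute exactly $(1-q^{1-s})^{\rho(X)}$. This uses the Tate conjecture together with semisimplicity of Frobenius on the algebraic part, which is known for K3 surfaces, so that the multiplicity of the eigenvalue $1$ equals $\rho(X)$. The remaining factors evaluate at $s=1$ to $\prod_{\beta_i\neq 1}(1-\beta_i)$.

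Equating the two sides gives
\[
\#{\rm Br}(X)\cdot|{\rm disc}\,{\rm NS}(X)| = q\cdot\prod_{\beta_i\neq1}(1-\beta_i).
\]

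The main obstacle is bookkeeping rather than new mathematics. The exact power of $q$ and the sign or normalization conventions (geometric versus arithmetic Frobenius, and the Tate twist) must be matched carefully to the paper's $L(X,t)$. In particular, the multiplicity of the eigenvalue $1$ must equal the rank $\rho(X)$ of ${\rm NS}(X)$. I would check this against the known case of a Kummer or supersingular surface to confirm that the factor is $q^{1}$.
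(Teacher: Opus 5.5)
Your proposal is correct and is essentially the paper's argument: the paper simply cites Tate's theorem that the Tate conjecture for a single prime implies finiteness of ${\rm Br}(X)$ and the Artin--Tate formula (with Milne covering the part at the characteristic) together with the Tate conjecture for K3 surfaces, and your Steps 2--3 just make explicit the specialization $\alpha(X)=1$, ${\rm NS}(X)_{\rm tors}=0$, and the eigenvalue bookkeeping that the paper leaves implicit. One small remark: you do not need semisimplicity to get multiplicity $\rho(X)$ for the eigenvalue $1$, because ${\rm NS}(X)\otimes\mathbb{Q}_p$ splits off orthogonally under the nondegenerate cup product, and an eigenvalue $1$ on its complement would give a Galois-invariant class outside ${\rm NS}(X)\otimes\mathbb{Q}_p$, contradicting the Tate conjecture.
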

\begin{proof}
By \cite[Theorem 5.2]{tate}, this theorem follows from the Tate conjecture for K3 surfaces for some prime $p\notmid q$. However, the Tate conjecture for K3 surfaces is true for all prime $p\notmid q$ (\cite{mp15} and \cite{kmp}).  
\end{proof}
\begin{lem}\label{zeta} Fix an embedding $\overline{\mathbb{Q}}\hookrightarrow \overline{\mathbb{Q}}_{p}$. Let $\zeta\in \overline{\mathbb{Q}}$ be a root of unity. Let  $s=p^{a}m$, $m\in \mathbb{Z}, (p, m)=1$ be the order of $\zeta$. If for every integer $n\geq 0$ the number $1-\zeta^{p^{n}}$ is non-zero, then $v_{p}(1-\zeta^{p^{a+n}})=0$.
\end{lem}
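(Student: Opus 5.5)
Set $\xi:=\zeta^{p^{a}}$, so that $\zeta^{p^{a+n}}=\xi^{p^{n}}$. Since $\zeta$ has order $p^{a}m$ with $(p,m)=1$, the root of unity $\xi$ has order exactly $m$, which is prime to $p$. Because $p\nmid m$, raising to the $p$-th power is a bijection on the cyclic group $\mu_m$ of $m$-th roots of unity. Hence $\xi^{p^{n}}$ again has order exactly $m$ for every $n\geq 0$. The hypothesis $1-\zeta^{p^{a+n}}\neq 0$ says $\xi^{p^{n}}\neq 1$, so $m>1$. It therefore suffices to prove the following: if $\eta$ is a root of unity of order $m>1$ with $p\nmid m$, then $v_p(1-\eta)=0$.

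For that claim I plan to use the cyclotomic polynomial identity
\[
\prod_{k=1}^{m-1}(1-\eta_0^{k})=m ,
\]
where $\eta_0$ is a primitive $m$-th root of unity. This follows by evaluating $(x^m-1)/(x-1)=\prod_{k=1}^{m-1}(x-\eta_0^{k})$ at $x=1$. Every factor $1-\eta_0^{k}$ is an algebraic integer, so its $p$-adic valuation under the fixed embedding $\overline{\mathbb{Q}}\hookrightarrow\overline{\mathbb{Q}}_p$ is nonnegative. The valuations of the factors sum to $v_p(m)=0$, so each one must be $0$. Since $\eta=\eta_0^{k}$ for some $1\leq k\leq m-1$, we get $v_p(1-\eta)=0$.

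Applying this with $\eta=\xi^{p^{n}}$ gives $v_p(1-\zeta^{p^{a+n}})=0$. An alternative route is to reduce modulo the maximal ideal of $\overline{\mathbb{Z}}_p$: the polynomial $x^m-1$ is separable over $\overline{\mathbb{F}}_p$, so distinct $m$-th roots of unity stay distinct after reduction, and $1-\eta$ is therefore a unit. There is no real obstacle here. The only points needing care are the bookkeeping showing that the order of $\zeta^{p^{a+n}}$ is exactly $m$, and the observation that the hypothesis forces $m>1$.
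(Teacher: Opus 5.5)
Your proof is correct and follows essentially the same route as the paper. Both arguments reduce to $\eta=\zeta^{p^{a+n}}$ of exact order $m>1$ prime to $p$, then use a cyclotomic identity to see that $1-\eta$ divides the $p$-adic unit $m$ in $\overline{\mathbb{Z}}_p$; the only cosmetic difference is that the paper writes $m=\sum_{k=1}^{m-1}(1-\eta^{k})$, each summand a multiple of $1-\eta$, whereas you use the product $m=\prod_{k=1}^{m-1}(1-\eta_0^{k})$ together with nonnegativity of the valuations of algebraic integers.
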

\begin{proof} By the assumption, it follows that $s>1$. If $m=1$, then $a>0$ and $\zeta^{p^{a}}=1$. This is a contradiction. Thus $m>1$. Since $m$ is the order of $\zeta^{p^{n+a}}$, we have $1+\zeta^{p^{n+a}}+\cdots + (\zeta^{p^{n+a}})^{m-1}=0.$ On the other hand, we have
\begin{align}
m=(m-1)+1&=(m-1)-(\zeta^{p^{n+a}}+\cdots +(\zeta^{p^{n+a}})^{m-1})
\\&=(1-\zeta^{p^{n+a}})+\cdots+(1-(\zeta^{p^{n+a}})^{m-1})\in (1-\zeta^{p^{n+a}})\subset \overline{\mathbb{Z}}_{p},
\end{align}
where $\overline{\mathbb{Z}}_{p}$ is the integral closure of $\mathbb{Z}_{p}$ in $\overline{\mathbb{Q}}_{p}$. Since the integer $m$ is invertible in $\mathbb{Z}_{p}$, we have $v_{p}(1-\zeta^{p^{n+a}})=0$. 
\end{proof}
\begin{lem}\label{ngeo} For every sufficiently large $n\geq0$,  we have
    \begin{equation}
        v_{p}(\prod_{i, \beta_{i}\notin W_{n}}(1-\beta_{i}^{p^{n}}))=v_{p}(L_{{\rm tr}}(X_{n}, 1)),
    \end{equation}
    where $W_{n}$ is the set of $p^{n}$-th roots of unity.
\end{lem}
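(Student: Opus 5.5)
We split the 22 eigenvalues $\beta_i$ of $F^{-1}$ on $H^{2}_{\text{\'et}}(\overline{X}, \mathbb{Q}_{p}(1))$ into two groups. The first group consists of those coming from ${\rm NS}(\overline{X})\otimes\mathbb{Q}_p$, which are roots of unity by the Tate conjecture (as recalled in the remark after Definition \ref{lfunction}). The second group consists of those coming from $T_p{\rm Br}(\overline{X})\otimes\mathbb{Q}_p$, which are not roots of unity. Indexing so that $\beta_1,\dots,\beta_\rho$ belong to the first group, the decomposition $L(X,t)={\rm det}(1-F^{-1}t:{\rm NS}(\overline{X})\otimes\mathbb{Q}_p)\cdot L_{\rm tr}(X,t)$ gives $L_{\rm tr}(X,t)=\prod_{i>\rho}(1-\beta_i t)$. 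Since $F^{-p^n}$ is the inverse Frobenius for $X_n$ and $T_p{\rm Br}(\overline{X})$ does not change under base change, we get $L_{\rm tr}(X_n,1)=\prod_{i>\rho}(1-\beta_i^{p^n})$. The claim therefore reduces to two statements for $n$ large. First, the transcendental $\beta_i$ are never in $W_n$, which is clear because they are not roots of unity. Second, every algebraic $\beta_i\notin W_n$ contributes $v_p(1-\beta_i^{p^n})=0$.

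For the second statement, we treat each root of unity $\beta_i$, $i\le\rho$, separately. Write its order as $p^{a_i}m_i$ with $(p,m_i)=1$.

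\emph{Case $m_i=1$.} Then $\beta_i$ is a $p$-power root of unity, so $\beta_i\in W_n$ as soon as $n\ge a_i$, and it is excluded from the product.

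\emph{Case $m_i>1$.} Then $\beta_i^{p^n}\neq 1$ for every $n$, so $\beta_i\notin W_n$ for all $n$. Lemma \ref{zeta} then gives $v_p(1-\beta_i^{p^{a_i+k}})=0$ for all $k\ge 0$. Its hypothesis, $1-\beta_i^{p^n}\neq0$ for all $n$, holds exactly because $m_i>1$.

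Now take $n\ge \max_{i\le\rho} a_i$. Then $\prod_{i:\beta_i\notin W_n}(1-\beta_i^{p^n})$ is the transcendental product $L_{\rm tr}(X_n,1)$ times a product of factors of valuation $0$. Taking $v_p$ gives the lemma.

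The main point needing care is the identification of which $\beta_i$ are roots of unity, that is, that the NS part accounts exactly for the root-of-unity eigenvalues. This is supplied by the Tate conjecture remark. There are two further routine checks. One must verify that $L_{\rm tr}(X_n,t)$ is the characteristic polynomial of $F^{-p^n}$ on the same module, so that its eigenvalues are the $\beta_i^{p^n}$ for $i>\rho$. One must also track that the hypothesis of Lemma \ref{zeta} is met in the case $m_i>1$.
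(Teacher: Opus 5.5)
Your proof is correct and follows the paper's argument. You split off the root-of-unity eigenvalues, identify the remaining product with $L_{\rm tr}(X_n,1)$, and use Lemma \ref{zeta} to show that roots of unity not in $W_n$ contribute valuation $0$; your threshold $n\ge\max_i a_i$ with the case split $m_i=1$ versus $m_i>1$ is a cleaner version of the paper's choice of $n_0$ and $n_1$. One small precision: the ${\rm NS}(\overline{X})$-eigenvalues are roots of unity unconditionally, and the Tate conjecture is needed only for the converse, namely that the eigenvalues on $T_p{\rm Br}(\overline{X})$ are not roots of unity.
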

\begin{proof}
    Let $W^{'}$ be the set of $\beta_{i}$ which is  a root of unity. By the Tate conjecture, we have $L_{{\rm tr}}(X_{n}, T)=\prod_{i: \beta_{i}\notin W^{'}}(1-\beta_{i}^{p^{n}}T)$. Hence
    \begin{equation}
        \prod_{i, \beta_{i}\notin W_{n}}(1-\beta_{i}^{p^{n}})=\prod_{i: \beta_{i}\in W^{'}\backslash W_{n}}(1-\beta^{p^{n}}_{i})\cdot L_{{\rm tr}}(X_{n}, 1).
    \end{equation}
    Let $n_{0}$ be the maximal order of $\beta_{i}\in W^{'}$. Then, for $n>n_{0}$,  the element $\beta_{i}\in W^{'}\backslash W_{n}$ does not belong to $W:=\bigcup_{n\geq0}W_{n}$. By Lemma \ref{zeta}, we can find a large number $n_{1}\geq n_{0}$ such that for every $n\geq n_{1}$, we have $v_{p}(1-\beta^{p^{n}}_{i})=0$ for $\beta_{i}\in W^{'}\backslash W_{n}$. Thus
    \begin{equation}
        v_{p}(\prod_{i: \beta_{i}\in W^{'}\backslash W_{n}}(1-\beta^{p^{n}}_{i}))=0.
    \end{equation}
\end{proof}
\begin{thm}\label{geoarith}For every $n\geq 0$, the canonical $\Gamma_{n}$-homomorphism
\begin{equation}\label{geoarith1}
{\rm Br}(X_{n})[p^{\infty}]\to {\rm Br}(\overline{X})^{G^{n}}[p^{\infty}]
\end{equation}
has bounded kernel and cokernel. In particular,  the canonical $\Gamma$-homomorphism 
\begin{equation}\label{geoarith2}
{\rm Br}(X_{\infty})[p^{\infty}]\to {\rm Br}(\overline{X})^{G^{\infty}}[p^{\infty}]
\end{equation}
has finite kernel and cokernel. 

\end{thm}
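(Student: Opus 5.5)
The plan is to use the Hochschild--Serre spectral sequence for the Galois cover $\overline{X}\to X_n$ with group $G^n\cong\widehat{\mathbb{Z}}$, and then to pin down the orders with the Artin--Tate formula. Since $G^n$ has cohomological dimension $1$ on torsion modules, $\overline{\mathbb{F}}_q^{*}=H^0(\overline{X},\mathbb{G}_m)$, and $H^{i}(\mathbb{F}_{q^{p^n}},\overline{\mathbb{F}}_q^{*})=0$ for $i\geq 1$, the low-degree exact sequence reads
\begin{equation}
0\to {\rm Pic}(X_n)\to {\rm Pic}(\overline{X})^{G^n}\to 0,\qquad {\rm Ker}\to H^{1}(G^n,{\rm Pic}(\overline{X}))\to 0,
\end{equation}
that is,
\begin{equation}
0\to H^{1}(G^n,{\rm Pic}(\overline{X}))\to {\rm Br}(X_n)\to {\rm Br}(\overline{X})^{G^n}\to H^{2}(G^n,{\rm Pic}(\overline{X})).
\end{equation}
This describes the kernel and cokernel of \eqref{geoarith1} in terms of Galois cohomology of the Néron--Severi lattice ${\rm NS}(\overline{X})={\rm Pic}(\overline{X})$, which is a free $\mathbb{Z}$-module of rank $\rho(\overline{X})$. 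Passing to $p$-primary parts, the kernel is $H^1(G^n,{\rm NS}(\overline{X}))[p^\infty]$ and the cokernel injects into $H^2(G^n,{\rm NS}(\overline{X}))[p^\infty]$.

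Next I will bound both groups uniformly in $n$. The action of $G$ on ${\rm NS}(\overline{X})$ factors through a finite quotient of order $N$, say. Write $N=p^a m$ with $(p,m)=1$. For $n\geq a$, the image of $G^n$ in $\Aut({\rm NS}(\overline{X}))$ is the cyclic group generated by $F^{p^n}$, and it equals the image of $G^{a}$ up to a prime-to-$p$ index; in particular it has constant order prime to $p$ once $n\geq a$. I then compute the cohomology of the procyclic group:
\begin{equation}
H^1(G^n,{\rm NS}(\overline{X}))=({\rm NS}_{G^n})_{\rm tors},\qquad H^2(G^n,{\rm NS}(\overline{X}))\cong H^1(G^n,{\rm NS}(\overline{X})\otimes\mathbb{Q}/\mathbb{Z}).
\end{equation}
Both are finite, and their $p$-parts are controlled by ${\rm coker}$ and ${\rm ker}$ of $1-F^{p^n}$ on ${\rm NS}\otimes\mathbb{Z}_p$ modulo the invariants. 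Since the Frobenius eigenvalues on ${\rm NS}$ are roots of unity, Lemma \ref{zeta} shows that for large $n$ the only eigenvalues whose $p^n$-th powers are not $1$ contribute $p$-adic units. The $p$-parts therefore stabilize as $n\to\infty$. A cleaner route: for $n\geq a$, the module ${\rm NS}(\overline{X})\otimes\mathbb{Z}_p$ splits as its $F^{p^n}$-invariants plus a complement on which $1-F^{p^n}$ is invertible. Hence the $p$-parts of both $H^1$ and $H^2$ are independent of $n$ for $n\geq a$, and they are finite for each of the finitely many small $n$. This gives boundedness.

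For the statement about $X_\infty$, I will take the direct limit over $n$ of the four-term sequences, using Proposition \ref{nlim} together with ${\rm Br}(\overline{X})^{G^\infty}[p^\infty]=\varinjlim_n {\rm Br}(\overline{X})^{G^n}[p^\infty]$. Direct limits are exact, and a direct limit of groups of uniformly bounded finite order has bounded finite order. So the kernel and cokernel of \eqref{geoarith2} are finite.

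The main obstacle is the cokernel: the map to $H^2(G^n,{\rm NS})$ need not be surjective, so I only get an injection. I will handle it as follows.
\begin{itemize}
\item Bound the cokernel by the finite group $H^2(G^n,{\rm NS}(\overline{X}))[p^\infty]$, which suffices for boundedness.
\item As a cross-check, compare orders. Theorem \ref{artintate} gives $\#{\rm Br}(X_n)[p^\infty]$, and Corollary \ref{geoeigen} together with Lemma \ref{ngeo} gives $\#{\rm Br}(\overline{X})^{G^n}[p^\infty]=p^{v_p(L_{\rm tr}(X_n,1))}$.
\item The Artin--Tate formula for $X_n$ differs from the latter only by the $p$-adic valuations of $|{\rm disc}\,{\rm NS}(X_n)|$, of $q^{p^n}$ (a unit), and of $\prod(1-\beta_i^{p^n})$ over the root-of-unity eigenvalues with $\beta_i^{p^n}\neq1$ (a unit for large $n$).
\item The discriminant of ${\rm NS}(X_n)={\rm NS}(\overline{X})^{G^n}$ (using the Hochschild--Serre identification above) is constant for $n\geq a$.
\end{itemize}
Hence $\#{\rm Br}(X_n)[p^\infty]/\#{\rm Br}(\overline{X})^{G^n}[p^\infty]$ is eventually constant, which is consistent with bounded kernel and cokernel.
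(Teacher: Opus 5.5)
Your kernel argument is fine. It is essentially the paper's Step 1: $K_n\cong H^1(G^n,{\rm NS}(\overline X))[p^\infty]$, and this even vanishes once the image of $G^n$ in ${\rm Aut}({\rm NS}(\overline X))$ has order prime to $p$.

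\textbf{Where it fails: the cokernel.} The group $H^2(G^n,{\rm NS}(\overline X))[p^\infty]$ is not finite. By your own identification it is $H^1(G^n,{\rm NS}(\overline X)\otimes\mathbb{Q}_p/\mathbb{Z}_p)$, the $F^{p^n}$-coinvariants of a divisible group. Using your splitting for $n\geq a$, it equals $(\mathbb{Q}_p/\mathbb{Z}_p)^{\rho(X_n)}$. Here $\rho(X_n)\geq 1$, because an ample class is defined over $\mathbb{F}_q$; already $H^2(\widehat{\mathbb{Z}},\mathbb{Z})\cong\mathbb{Q}/\mathbb{Z}$.

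So ``independent of $n$'' is true but gives no bound. $C_n$ only embeds into a fixed infinite group. Meanwhile ${\rm Br}(\overline X)^{G^n}[p^\infty]$ can grow with $n$ (like $p^{\lambda n}$), so nothing from the spectral sequence prevents its image in $H^2$ from growing too. The Hochschild--Serre sequence alone cannot bound $C_n$; arithmetic input is needed.

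\textbf{The fix.} That input is exactly what you labelled a cross-check, and it must be promoted to the actual argument; this is the paper's Step 2. Both ${\rm Br}(X_n)[p^\infty]$ and ${\rm Br}(\overline X)^{G^n}[p^\infty]$ are finite (Theorems~\ref{tate conjecture for k3} and~\ref{ls}), so
\[
\#C_n=\frac{\#{\rm Br}(\overline X)^{G^n}[p^\infty]\cdot \#K_n}{\#{\rm Br}(X_n)[p^\infty]}.
\]
The Artin--Tate formula for $X_n$ (with $q^{p^n}$ a $p$-adic unit), together with Corollary~\ref{geoeigen} and Lemma~\ref{ngeo}, shows that for large $n$ the ratio of the two orders equals $p^{v_p(|{\rm disc}\,{\rm NS}(X_n)|)}$. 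This is constant, since ${\rm NS}(X_n)={\rm NS}(\overline X)^{G^n}$ stabilizes. Combined with your bound on $K_n$, this bounds $C_n$ for all large $n$, and the finitely many remaining $C_n$ are finite.

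With this replacement, your direct-limit argument for the map at level $\infty$ goes through unchanged, since kernels and cokernels commute with filtered colimits and the orders are uniformly bounded.
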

\begin{proof}  Let
 \begin{align}
 K_{n}&:={\rm ker}({\rm Br}(X_{n})[p^{\infty}]\to {\rm Br}(\overline{X})^{G^{n}}[p^{\infty}]),
 \\ C_n&:={\rm coker}({\rm Br}(X_{n})[p^{\infty}]\to {\rm Br}(\overline{X})^{G^{n}}[p^{\infty}]).
\end{align}
The proof has two steps.

{\rm Step 1}.  We prove the boundedness of $\#K_{n}$. 

Using  the Hochschild--Serre spectral sequence (\cite[III-Theorem 2.20]{milne}):
\begin{equation}
E^{p, q}_{2}=H^{p}(G^{n}, H^{q}_{\text{\'et}}(\overline{X}, \mathbb{G}_{m}))\Rightarrow H^{p+q}_{\text{\'et}}(X_{n}, \mathbb{G}_{m})
\end{equation} 
and Hilbert's theorem 90, we have an exact sequence of $\mathbb{Z}$-modules
\begin{equation}\label{hsexact}
0\to {\rm Pic}(X_{n})[p^{\infty}]\to {\rm Pic}(\overline{X})^{G^{n}}[p^{\infty}]\to {\rm Br}(\mathbb{F}_{q^{p^{n}}})[p^{\infty}]\to K_{n} \to 
 H^{1}(G^{n}, {\rm Pic}(\overline{X}))[p^{\infty}].
\end{equation}
Since ${\rm Br}(\mathbb{F}_{q^{p^{n}}})=0$ (\cite[XIII-Proposition 5]{serre}), we have 
\begin{equation}
K_{n}\hookrightarrow H^{1}(G^{n}, {\rm Pic}(\overline{X}))[p^{\infty}].
\end{equation}
Hence it suffices to show that $\#H^{1}(G^{n}, {\rm Pic}(\overline{X}))$ is bounded. Let $N_{n}\subset G^{n}$ be the kernel of the Artin representation $G^{n}\to {\rm GL}({\rm Pic}(\overline{X}))$. Then the quotient $G^{n}/N_{n}$ is a finite group. By the five-term exact sequence (\cite[Proposition 1.6.6]{nsw}), we have an exact sequence of $\mathbb{Z}$-modules:
\begin{equation}\label{5term}
0\to H^{1}(G^{n}/N_{n}, {\rm Pic}(\overline{X}))\to H^{1}(G^{n}, {\rm Pic}(\overline{X}))\to H^{1}(N_{n}, {\rm Pic}(\overline{X}))^{G^{n}/N_{n}}.
\end{equation}
Since $N_{n}$ acts on ${\rm Pic}(\overline{X})$ trivially, we must have $H^{1}(N_{n}, {\rm Pic}(\overline{X}))={\rm Hom}_{{\rm cont}}(N_{n}, {\rm Pic}(\overline{X}))$. On the other hand,  the group $N_{n}$ is a profinite group, thus compact. Since the group ${\rm Pic}(\overline{X})$ is a discrete group, the images  of  all continuous homomorphisms from $N_{n}$ are finite subgroups of ${\rm Pic}(\overline{X})$. However, since ${\rm Pic}(\overline{X})$ is a free $\mathbb{Z}$-module, every finite subgroup is 0, i.e.,  ${\rm Hom}_{{\rm cont}}(N_{n}, {\rm Pic}(\overline{X}))=0$. Hence the exact sequence \eqref{5term} says 
\begin{equation}
H^{1}(G^{n}/N_{n}, {\rm Pic}(\overline{X}))\cong H^{1}(G^{n}, {\rm Pic}(\overline{X})).
\end{equation}
Let $s\geq 0$ be the order of the generator $F^{p^{n}}\in G^{n}/N_{n}$. By \cite[IV-Section 8]{cassels}, the left-hand side is isomorphic to the quotient
\begin{equation}\label{groupcoh}
\frac{{\rm ker}(1+F^{p^{n}}+F^{2p^{n}}+\cdots+F^{(s-1)p^{n}}: {\rm Pic}(\overline{X})\to {\rm Pic}(\overline{X}))}{(F^{p^{n}}-1){\rm Pic}(\overline{X})}. 
\end{equation}
On the other hand, let $r\geq 0$ be the order of the automorphism $F\in {\rm GL}({\rm Pic}(\overline{X}))$. Then, for some $0\leq i\leq r$, the automorphism $F^{p^{n}}\in {\rm GL}({\rm Pic}(\overline{X}))$ is equal to $F^{i}$. Note that $s$ is equal to the order of $F^{i}\in {\rm GL}({\rm Pic}(\overline{X}))$. Hence the group \eqref{groupcoh} is one of the groups 
\begin{equation}
\frac{{\rm ker}(1+F^{i}+F^{2i}+\cdots+F^{(s-1)i}: {\rm Pic}(\overline{X})\to {\rm Pic}(\overline{X}))}{(F^{i}-1){\rm Pic}(\overline{X})},
\end{equation}
where $0\leq i\leq r-1$. Since the group $H^{1}(G^{n}, {\rm Pic}(\overline{X}))$ is finite (\cite[Chapter 18, Lemma 2.2]{Huy}), this shows Step 1. 

{\rm Step 2}. For every sufficiently large $n\geq 0$, the quotient 
\begin{equation}\label{quo}
\frac{\#{\rm Br}(\bar X)^{G^n}[p^{\infty}]}{\#{\rm Br}(X_{n})[p^{\infty}]}
\end{equation}
is a constant integer. In particular, the quotient \eqref{quo} is bounded. 

Consider an increasing sequence of submodules:
\begin{equation}
{\rm NS}(X)\subset {\rm NS}(X_{1})\subset \cdots \subset {\rm NS}(\overline{X}).
\end{equation}
Since ${\rm NS}(\overline{X})$ is a finitely generated $\mathbb{Z}$-module, there exists a number $n_{0}\geq 0$ such that for $n\geq n_{0}$ we have ${\rm NS}(X_{n_{0}})={\rm NS}(X_{n})$. 

By Corollary \ref{geoeigen} and Lemma \ref{ngeo}, for sufficiently large $n\geq0$, we have
\begin{equation}
v_{p}(\#{\rm Br}(\overline{X})^{G^{n}}[p^{\infty}])
=
\sum_{i: \beta_{i}\notin W_{n}}v_{p}(1-\beta_{i}^{p^{n}}).
\end{equation}
By the Artin--Tate formula applied to $X_{n}$, we have
\[v_{p}(\#{\rm Br}(X_{n})[p^{\infty}])+ v_{p}(|{\rm disc}\,{\rm NS}(X_{n})|)=v_{p}(\prod_{i: \beta_{i}\notin W_{n}}(1-\beta^{p^{n}}_{i}))=v_{p}(\#{\rm Br}(\overline{X})^{G^{n}}[p^{\infty}]),\]
that is, for every $n\geq n_{0}$, 
\begin{equation}
v_{p}\left(\frac{\#{\rm Br}(\overline{X})^{G^{n}}[p^{\infty}]}{\#{\rm Br}(X_{n})[p^{\infty}]}\right)=v_{p}(|{\rm disc} {\rm NS}(X_{n})|)=v_{p}(|{\rm disc} {\rm NS}(X_{n_{0}})|).
\end{equation} 
This shows Step 2. 

Since
\begin{equation}
\# C_n
=
\frac{
\#\operatorname{Br}(\bar X)^{G_n}[p^{\infty}]\cdot \#K_{n}
}{
\#\operatorname{Br}(X_{n})[p^{\infty}]
}
\end{equation}
for every sufficiently large $n\geq 0$, the order of the cokernel $\# C_{n}$ is bounded by Steps 1 and 2. 
\end{proof}
\begin{cor}\label{tormod} The Iwasawa module ${\rm Br}(X_{\infty})[p^{\infty}]^{\lor}$ is a finitely generated torsion $\Lambda$-module.  
\end{cor}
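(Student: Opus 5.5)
We deduce this from Proposition \ref{geotorsion} together with the second half of Theorem \ref{geoarith}. Write $A:={\rm Br}(X_{\infty})[p^{\infty}]$ and $B:={\rm Br}(\overline{X})^{G^{\infty}}[p^{\infty}]$, and let $\phi\colon A\to B$ be the canonical $\Gamma$-homomorphism \eqref{geoarith2}.

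First we check that $\phi$ is $\Gamma$-equivariant. By Proposition \ref{nlim}, $\phi$ is the direct limit of the maps ${\rm Br}(X_n)[p^\infty]\to{\rm Br}(\overline{X})^{G^n}[p^\infty]$, and these are $\Gamma_n$-equivariant. Theorem \ref{geoarith} says that $\ker\phi$ and ${\rm coker}\,\phi$ are finite. Pontryagin duality is exact on discrete $p$-primary torsion groups. Dualizing the four-term exact sequence
\[0\to\ker\phi\to A\to B\to{\rm coker}\,\phi\to 0\]
therefore gives an exact sequence of compact $\Lambda$-modules
\[0\to({\rm coker}\,\phi)^{\lor}\to B^{\lor}\to A^{\lor}\to(\ker\phi)^{\lor}\to 0,\]
whose two outer terms are finite.

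\textbf{Finite generation.} Let $I\subset A^{\lor}$ be the image of $B^{\lor}$. Then $I$ is a quotient of $B^{\lor}$, which is finitely generated by Proposition \ref{geotorsion}, so $I$ is finitely generated. Moreover $A^{\lor}/I\cong(\ker\phi)^{\lor}$ is finite, hence finitely generated as a $\mathbb{Z}_p$-module and so as a $\Lambda$-module. An extension of finitely generated modules is finitely generated, so $A^{\lor}$ is finitely generated over $\Lambda$.

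\textbf{Torsion.} Since $B^{\lor}$ is torsion, it is annihilated by some nonzero $f\in\Lambda$, and so is its quotient $I$. The finite module $A^{\lor}/I$ is annihilated by $p^k$ for some $k\ge 0$. Hence $p^kf$ annihilates $A^{\lor}$, and $p^kf\neq0$ because $\Lambda\cong\mathbb{Z}_p[[T]]$ is a domain. So $A^{\lor}$ is torsion.

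The only step needing some care is the equivariance and exactness check, but it is formal. We also record a remark for the later Iwasawa invariant computations: finite modules are pseudo-null, so the sequence shows that $A^{\lor}$ and $B^{\lor}$ are pseudo-isomorphic and have the same characteristic ideal.
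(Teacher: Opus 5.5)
Your proof is correct and follows essentially the same route as the paper. Both dualize the map ${\rm Br}(X_{\infty})[p^{\infty}]\to{\rm Br}(\overline{X})^{G^{\infty}}[p^{\infty}]$, whose kernel and cokernel are finite by Theorem \ref{geoarith}, and transfer ``finitely generated torsion'' from Proposition \ref{geotorsion}; your explicit checks of finite generation and of the nonzero annihilator $p^{k}f$ simply spell out what the paper asserts in two lines.
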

\begin{proof}
By Theorem \ref{geoarith}, the dual of the homomorphism ${\rm Br}(X_{\infty})[p^{\infty}]\to {\rm Br}(\overline{X})^{G^{\infty}}[p^{\infty}]$ also has finite kernel and cokernel. Since the Iwasawa module ${\rm Br}(\overline{X})^{G^{\infty}}[p^{\infty}]^{\lor}$ is a finitely generated torsion $\Lambda$-module (Proposition \ref{geotorsion}), so is the Iwasawa module ${\rm Br}(X_{\infty})[p^{\infty}]^{\lor}$. 
\end{proof}
\begin{lem}\label{tran} For each integer $n\geq 0$, let 
\begin{equation}
A_{n}\xrightarrow{f_{n}} B_{n}\xrightarrow{g_{n}} C_{n}
\end{equation}
be a sequence of morphisms of modules. If $g_{n}\circ f_{n}$ and $g_{n}$ have kernels and cokernels of bounded order, then $f_{n}$ has bounded kernel and cokernel. 
\end{lem}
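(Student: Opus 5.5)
The argument is a diagram chase: each group attached to $f_n$ will be compared with groups attached to $h_n:=g_n\circ f_n$ and to $g_n$, and all the latter have bounded order by hypothesis. I will use the standard exact sequence for a composition of module homomorphisms, obtained from the snake lemma or by direct chasing:
\begin{equation}
0\to \ker f_n\to \ker h_n\to \ker g_n\to \operatorname{coker} f_n\to \operatorname{coker} h_n\to \operatorname{coker} g_n\to 0.
\end{equation}

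For the kernel, every element killed by $f_n$ is also killed by $h_n$, so $\ker f_n\subset \ker h_n$. Hence $\#\ker f_n\leq \#\ker h_n$, which is bounded independently of $n$.

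For the cokernel, the six-term sequence shows that $\operatorname{coker} f_n$ sits in a short exact sequence
\begin{equation}
0\to \operatorname{im}(\ker g_n\to \operatorname{coker} f_n)\to \operatorname{coker} f_n\to \ker(\operatorname{coker} h_n\to \operatorname{coker} g_n)\to 0.
\end{equation}
Here the first term is a quotient of $\ker g_n$, and the third term is a subgroup of $\operatorname{coker} h_n$. Therefore $\#\operatorname{coker} f_n\leq \#\ker g_n\cdot \#\operatorname{coker} h_n$, which is again bounded uniformly in $n$.

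The connecting map $\ker g_n\to\operatorname{coker} f_n$ can also be written down directly, so the six-term sequence is not needed. The map $B_n\to\operatorname{coker} f_n$ restricts to $\ker g_n$, and the induced map $B_n/(\ker g_n+\operatorname{im} f_n)\to C_n/\operatorname{im} h_n$, given by $b\mapsto g_n(b)$, is injective. Indeed, if $g_n(b)=g_n(f_n(a))$, then $b-f_n(a)\in\ker g_n$. This gives the same bound. The only point needing care is this injectivity check, which is the step I would verify explicitly; everything else is elementary. Note also that the hypothesis on $\operatorname{coker} g_n$ is not needed.
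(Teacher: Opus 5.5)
Your proof is correct and is essentially the paper's argument. The paper bounds the kernel via $\ker f_n\subset\ker(g_n\circ f_n)$, exactly as you do. It bounds the cokernel via the embedding ${\rm coker}(f_n)/(\langle\ker g_n,{\rm im}(f_n)\rangle/{\rm im}(f_n))\hookrightarrow{\rm coker}(g_n\circ f_n)$ induced by $g_n$, which is precisely your direct injectivity check; the six-term sequence is just a repackaging of it.
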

\begin{proof}Clearly, ${\rm ker}(f_{n})\subset {\rm ker}(g_{n}\circ f_{n})$. So ${\rm ker}(f_{n})$ is bounded by assumption. On the other hand, $g_{n}$ induces an embedding:
\begin{equation}
\frac{{\rm coker}(f_{n})}{\langle {\rm ker}(g_{n}), {\rm im}(f_{n})\rangle/{\rm im}(f_{n})}\hookrightarrow {\rm coker}(g_{n}\circ f_{n}),
\end{equation}
where $\langle{\rm ker}(g_{n}), {\rm im}(f_{n})\rangle$ is the submodule generated by ${\rm ker}(g_{n})$ and ${\rm im}(f_{n})$ in $B_{n}$. Since ${\rm ker}(g_{n})$ and ${\rm coker}(g_{n}\circ f_{n})$ are bounded, it follows that ${\rm coker}(f_{n})$ is bounded.
\end{proof}
\begin{thm}[Control Theorem]\label{K3conto} For every $n\geq 0$, the natural homomorphism of $\Gamma_{n}$-modules
\begin{equation}
{\rm Br}(X_{n})[p^{\infty}]\to {\rm Br}(X_{\infty})^{\Gamma^{n}}[p^{\infty}]
\end{equation}
has bounded kernel and cokernel. 
\end{thm}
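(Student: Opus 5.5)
The plan is to apply Lemma \ref{tran} to the composition
\begin{equation}
{\rm Br}(X_{n})[p^{\infty}]\xrightarrow{f_{n}} {\rm Br}(X_{\infty})^{\Gamma^{n}}[p^{\infty}]\xrightarrow{g_{n}} {\rm Br}(\overline{X})^{G^{n}}[p^{\infty}].
\end{equation}
Here $g_{n}$ is the restriction of the canonical $\Gamma$-homomorphism \eqref{geoarith2} ${\rm Br}(X_{\infty})[p^{\infty}]\to {\rm Br}(\overline{X})^{G^{\infty}}[p^{\infty}]$ to $\Gamma^{n}$-invariants. It lands in ${\rm Br}(\overline{X})^{G^{n}}[p^{\infty}]$ because, exactly as in the proof of Lemma \ref{geoquo}, the $\Gamma^{n}$-invariants of ${\rm Br}(\overline{X})^{G^{\infty}}[p^{\infty}]$ are the $\gamma^{p^{n}}$-invariants, i.e.\ ${\rm Br}(\overline{X})^{G^{n}}[p^{\infty}]$. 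The first step is to check that $g_{n}\circ f_{n}$ is the canonical map \eqref{geoarith1}. This holds because pullback along $\overline{X}\to X_{\infty}\to X_{n}$ is functorial. Theorem \ref{geoarith} then says $g_{n}\circ f_{n}$ has bounded kernel and cokernel.

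By Lemma \ref{tran}, it then suffices to show that $g_{n}$ has kernel and cokernel of bounded order, independently of $n$. Write $\phi:{\rm Br}(X_{\infty})[p^{\infty}]\to {\rm Br}(\overline{X})^{G^{\infty}}[p^{\infty}]$ and set $A:=\ker\phi$, $B:={\rm coker}\,\phi$. Both are finite by Theorem \ref{geoarith}.

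For the kernel, note that $\ker g_{n}=A^{\Gamma^{n}}\subset A$, so its order is bounded by $\#A$.

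For the cokernel, split $\phi$ into $0\to A\to {\rm Br}(X_{\infty})[p^{\infty}]\to I\to 0$ and $0\to I\to {\rm Br}(\overline{X})^{G^{\infty}}[p^{\infty}]\to B\to 0$, where $I:={\rm im}\,\phi$. Taking $\Gamma^{n}$-invariants gives
\begin{equation}
{\rm coker}({\rm Br}(X_{\infty})[p^{\infty}]^{\Gamma^{n}}\to I^{\Gamma^{n}})\hookrightarrow H^{1}(\Gamma^{n},A),\qquad {\rm coker}(I^{\Gamma^{n}}\to {\rm Br}(\overline{X})^{G^{n}}[p^{\infty}])\hookrightarrow B^{\Gamma^{n}}.
\end{equation}
Since $\Gamma^{n}\cong\mathbb{Z}_{p}$ is procyclic with topological generator $\gamma^{p^{n}}$, we have $H^{1}(\Gamma^{n},A)\cong A/(\gamma^{p^{n}}-1)A$, whose order is at most $\#A$. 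Also $\#B^{\Gamma^{n}}\leq\#B$. Hence $\#{\rm coker}\,g_{n}\leq \#A\cdot\#B$, which is bounded.

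I expect the only delicate point to be the cohomological bookkeeping: verifying the identification $H^{1}(\Gamma^{n},A)\cong A/(\gamma^{p^{n}}-1)A$ for a finite discrete module $A$ over $\mathbb{Z}_{p}$, and confirming that the invariants of ${\rm Br}(X_{\infty})[p^{\infty}]$ are taken compatibly with the $\Gamma$-action defined in Section \ref{briwasawa}. Granting these, Lemma \ref{tran} yields the theorem.
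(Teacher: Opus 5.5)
Your proposal is correct and follows the paper's route: you factor the map through $f^{\Gamma^{n}}\colon {\rm Br}(X_{\infty})^{\Gamma^{n}}[p^{\infty}]\to {\rm Br}(\overline{X})^{G^{n}}[p^{\infty}]$, bound its kernel by $\ker f$, and conclude with Lemma \ref{tran} and Theorem \ref{geoarith}. Your cokernel step is actually more careful than the paper's, which only controls ${\rm Br}(\overline{X})^{G^{n}}[p^{\infty}]/{\rm im}(f)^{\Gamma^{n}}$ (a quotient of ${\rm coker}(f^{\Gamma^{n}})$, not the reverse); your embedding ${\rm im}(f)^{\Gamma^{n}}/{\rm im}(f^{\Gamma^{n}})\hookrightarrow H^{1}(\Gamma^{n},\ker f)$, with $\#H^{1}(\Gamma^{n},\ker f)\le\#\ker f$ because a continuous cocycle on $\Gamma^{n}$ is determined by its value at $\gamma^{p^{n}}$, supplies exactly the bound the paper's argument omits.
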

\begin{proof} Let $f: {\rm Br}(X_{\infty})[p^{\infty}]\to {\rm Br}(\overline{X})^{G^{\infty}}[p^{\infty}]$ be the morphism \eqref{geoarith2}. Since the composition via projections $\overline{X}\to X_{\infty}\to X_{n}$ is equal to the projection $\overline{X}\to X_{n}$, $f$ factors as 
\begin{equation}
{\rm Br}(X_{n})[p^{\infty}]\to {\rm Br}(X_{\infty})^{\Gamma^{n}}[p^{\infty}]\xrightarrow{f^{\Gamma^{n}}} ({\rm Br}(\overline{X})^{G^{\infty}}[p^{\infty}])^{\Gamma^{n}}={\rm Br}(\overline{X})^{G^{n}}[p^{\infty}],
\end{equation}
where $f^{\Gamma^{n}}$ is the morphism induced by the left-exact functor $(-)^{\Gamma^{n}}$ from $\Gamma^{n}$-modules to $\mathbb{Z}$-modules. 

To use Lemma \ref{tran}, we show that the morphism $f^{\Gamma^{n}}$ has bounded kernel and cokernel. In fact, since ${\rm ker}(f^{\Gamma^{n}})={\rm ker}(f)^{\Gamma^{n}}\subset {\rm ker}(f)$,  the kernel of $f^{\Gamma^{n}}$ is bounded. On the other hand, it is easy to see the inclusion ${\rm im}(f^{\Gamma^{n}})\subset {\rm im}(f)^{\Gamma^{n}}$. Moreover, the exact sequence of $\Gamma$-modules
\begin{equation}
0\to {\rm im}(f)\to {\rm Br}(\overline{X})^{G^{\infty}}[p^{\infty}]\to {\rm coker}(f)\to 0
\end{equation}
induces an exact sequence of $\mathbb{Z}$-modules
\begin{equation}
0\to {\rm im}(f)^{\Gamma^{n}}\to {\rm Br}(\overline{X})^{G^{n}}[p^{\infty}]\to {\rm coker}(f)^{\Gamma^{n}}.
\end{equation}
Thus we obtain
\begin{equation}{\rm Br}(\overline{X})^{G^{n}}[p^{\infty}]/{\rm im}(f)^{\Gamma^{n}}\hookrightarrow {\rm coker}(f)^{\Gamma^{n}}\hookrightarrow {\rm coker}(f).
\end{equation}  
Since ${\rm coker}(f^{\Gamma^{n}})$ is a quotient of ${\rm Br}(\overline{X})^{G^{n}}[p^{\infty}]/{\rm im}(f)^{\Gamma^{n}}$, the boundedness of the cokernel of $f^{\Gamma^{n}}$ follows from the boundedness of ${\rm coker}(f)$. 

By Lemma \ref{tran}, the boundedness of $f$ and $f^{\Gamma^{n}}$ implies the boundedness of ${\rm Br}(X_{n})[p^{\infty}]\to {\rm Br}(X_{\infty})^{\Gamma^{n}}[p^{\infty}]$.
\end{proof}
\section{Iwasawa-type formula for ${\rm Br}(X_{n})[p^{\infty}]$}\label{iwaproof}
In this section, we prove Theorem \ref{iwasawatype}. We give two independent proofs of this Iwasawa-type formula for ${\rm Br}(X_{n})[p^{\infty}]$. 
\subsection{First proof of Iwasawa-type formula}
The first proof is based on the proof of Theorem \ref{geoarith} and module-theoretic Iwasawa-type formula (Theorem \ref{moduiwasawa}). 
\begin{thm}[Iwasawa-type formula]\label{iwasawatype2}  Let $\mu=\mu({\rm Br}(X_{\infty})[p^{\infty}]^{\lor})$ and let $ \lambda=\lambda({\rm Br}(X_{\infty})[p^{\infty}]^{\lor})$ be the Iwasawa invariants of the Iwasawa module ${\rm Br}(X_{\infty})[p^{\infty}]^{\lor}$ (see Definition \ref{moduinv}). Then
 there exists a unique integer $\nu=\nu({\rm Br}(X_{\infty})[p^{\infty}]^{\lor})$ such that, for every sufficiently large $n\geq 0$, we have
\[
\# {\rm Br}(X_n)[p^{\infty}]=p^{\mu p^n+\lambda n+\nu}.
\] 	
\end{thm}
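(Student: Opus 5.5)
Set $M:={\rm Br}(X_{\infty})[p^{\infty}]^{\lor}$ and $M':={\rm Br}(\overline{X})^{G^{\infty}}[p^{\infty}]^{\lor}$. By Corollary \ref{tormod} and Proposition \ref{geotorsion} both are finitely generated torsion $\Lambda$-modules. By Lemma \ref{geoquo}, $M'/(\gamma^{p^n}-1)M'\cong {\rm Br}(\overline{X})^{G^n}[p^{\infty}]^{\lor}$, which is finite by Theorem \ref{ls}. So Theorem \ref{moduiwasawa} applies to $M'$ and gives, for $n\gg0$,
\[
\#{\rm Br}(\overline{X})^{G^n}[p^{\infty}]=p^{\mu(M')p^n+\lambda(M')n+\nu'}.
\]
Step 2 of the proof of Theorem \ref{geoarith} shows that for $n\geq n_0$,
\[
\#{\rm Br}(X_n)[p^{\infty}]=\#{\rm Br}(\overline{X})^{G^n}[p^{\infty}]\cdot p^{-c},\qquad c=v_p(|{\rm disc}\,{\rm NS}(X_{n_0})|).
\]
Hence $\#{\rm Br}(X_n)[p^{\infty}]=p^{\mu(M')p^n+\lambda(M')n+\nu'-c}$ for all large $n$.

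It remains to replace $\mu(M'),\lambda(M')$ by $\mu(M),\lambda(M)$. The dual of the map \eqref{geoarith2} is a $\Lambda$-homomorphism $M'\to M$. By Theorem \ref{geoarith} its kernel and cokernel are finite, so it is a pseudo-isomorphism. Finite modules have trivial localization at every height-one prime of $\Lambda$, so the lengths $l(\mathfrak{p},\cdot)$ agree. Therefore ${\rm Char}_\Lambda(M)={\rm Char}_\Lambda(M')$, and by \eqref{chardef} and Definition \ref{moduinv} we get $\mu(M)=\mu(M')$ and $\lambda(M)=\lambda(M')$. Setting $\nu:=\nu'-c$ gives the formula.

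Uniqueness of $\nu$ is immediate: once $\mu$ and $\lambda$ are fixed, two values of $\nu$ would give the same left-hand side for large $n$, so they are equal. (Uniqueness of $\mu,\lambda$ themselves also holds, since the functions $p^n$, $n$, $1$ are linearly independent as sequences, but the statement fixes $\mu,\lambda$ anyway.)

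The main point to check is that the pseudo-isomorphism really runs in the correct direction and is $\Lambda$-linear. This needs the map \eqref{geoarith2} to be $\Gamma$-equivariant. That follows because the $\Gamma$-action on ${\rm Br}(\overline{X})^{G^\infty}[p^\infty]$ is defined through lifts to $G$, compatibly with pullback along $\overline{X}\to X_n$. I would also confirm that Step 2 really gives an exact equality of orders, not just boundedness of the ratio: the Artin--Tate comparison there gives equality of $p$-adic valuations, and both sides are $p$-power orders, so it does.
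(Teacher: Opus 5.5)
Your proposal is correct and is essentially the paper's own argument. It applies Theorem \ref{moduiwasawa} to ${\rm Br}(\overline{X})^{G^{\infty}}[p^{\infty}]^{\lor}$ via Lemma \ref{geoquo}, uses the constant ratio from Step 2 of Theorem \ref{geoarith}, and identifies $\mu,\lambda$ through the pseudo-isomorphism supplied by Theorem \ref{geoarith}; your argument via lengths at height-one primes simply spells out the paper's appeal to the structure theorem.
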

\begin{proof}  Let $\mu_{0}, \lambda_{0}\in\mathbb{Z}$ be the Iwasawa invariants for ${\rm Br}(\overline{X})^{G^{\infty}}[p^{\infty}]^{\lor}$. By Lemma \ref{geoquo} and Theorem \ref{moduiwasawa}, there exists a unique $\nu_{0}\in\mathbb{Z}$ such that, for every sufficiently large $n\geq 0$, we have 
\begin{equation}
\#{\rm Br}(\overline{X})^{G^{n}}[p^{\infty}]=\#\frac{{\rm Br}(\overline{X})^{G^{\infty}}[p^{\infty}]^{\lor}}{(\gamma^{p^{n}}-1){\rm Br}(\overline{X})^{G^{\infty}}[p^{\infty}]^{\lor}}=p^{\mu_{0} p^{n}+\lambda_{0} n+\nu_{0}},
\end{equation}
where $\gamma\in \Gamma$ is the topological generator induced from the arithmetic Frobenius $F$. 
Since ${\rm Br}(X_{\infty})[p^{\infty}]^{\lor}$ is pseudo-isomorphic to ${\rm Br}(\overline{X})^{G^{\infty}}[p^{\infty}]^{\lor}$, by Theorem \ref{iwstr}, we have   $\mu=\mu_{0}$ and $\lambda=\lambda_{0}$.
By Step 2 of Theorem \ref{geoarith}, there exists an integer $\nu_{1}$ such that, for every sufficiently large $n\geq 0$, we have
\begin{equation}
\frac{\#{\rm Br}(\overline{X})^{G^{n}}[p^{\infty}]}{\#{\rm Br}(X_{n})[p^{\infty}]}=p^{v_{1}}.
\end{equation}
Hence
\begin{equation}
\# {\rm Br}(X_n)[p^{\infty}]=\#{\rm Br}(\overline{X})^{G^{n}}[p^{\infty}]p^{-\nu_{1}}=p^{\mu p^n+\lambda n+\nu},
\end{equation}
where $\nu:=\nu_{0}-\nu_{1}$. 
\end{proof}
\subsection{Second proof of Iwasawa-type formula}

The second proof uses the fundamental evaluation formula in Iwasawa theory (Theorem \ref{fef}).

Fix an embedding $\overline{\mathbb{Q}}\hookrightarrow\overline{\mathbb{Q}}_{p}$. For a given K3 surface $X/\mathbb{F}_{q}$,  let $L(X, t)=\prod_{i=1}^{22}(1-\beta_i t)\in\mathbb{Z}_{p}[t]$ as before.

\begin{thm}[Iwasawa-type formula]\label{iwasawatype1}  Let $X/\mathbb{F}_{q}$ be a K3 surface, and let $X_{n}:=X\times_{\mathbb{F}_{q}}{\rm Spec}(\mathbb{F}_{q^{p^{n}}})$. Let $\mu=\mu(L_{{\rm tr}}(X, 1+T))$ and $ \lambda=\lambda(L_{{\rm tr}}(X, 1+T))$ be the Iwasawa invariants of $L_{{\rm tr}}(X, 1+T)$ (see Definition \ref{polyiwa}). Then there exists a unique integer $\nu=\nu(L_{{\rm tr}}(X, 1+T))$ such that, for every sufficiently large $n\geq 0$, we have
\[
\# {\rm Br}(X_n)[p^{\infty}]=p^{\mu p^n+\lambda n+\nu}.
\] 	
\end{thm}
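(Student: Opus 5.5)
The plan is to turn the Artin--Tate formula into an evaluation of $f(T):=L_{\rm tr}(X,1+T)\in\mathbb{Z}_p[[T]]$ at the points $\zeta-1$, and then apply the fundamental evaluation formula (Theorem \ref{fef}). The key identity is
\[
L_{\rm tr}(X_n,t^{p^n})=\prod_{\zeta\in W_n} L_{\rm tr}(X,\zeta t).
\]
It holds because the eigenvalues of $F^{-p^n}$ on $T_p{\rm Br}(\overline{X})$ are the $\beta_i^{p^n}$, where $\beta_i$ runs over the eigenvalues of $F^{-1}$ that are not roots of unity, and $1-\beta^{p^n}t^{p^n}=\prod_{\zeta\in W_n}(1-\beta\zeta t)$. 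Setting $t=1$ and replacing $\zeta$ by $\zeta^{-1}$ (a bijection of $W_n$), I get
\[
L_{\rm tr}(X_n,1)=\prod_{\zeta\in W_n} f(\zeta-1)=f(0)\cdot\prod_{\zeta\in W_n\setminus\{1\}} f(\zeta-1).
\]

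Next I take $p$-adic valuations. By Step 2 in the proof of Theorem \ref{geoarith} (which combines Corollary \ref{geoeigen}, Lemma \ref{ngeo} and Theorem \ref{artintate}), for $n\geq n_0$ we have
\[
v_p(\#{\rm Br}(X_n)[p^\infty])=v_p(L_{\rm tr}(X_n,1))-c,
\]
where $c=v_p(|{\rm disc}\,{\rm NS}(X_{n_0})|)$ is constant. The quantity $f(0)=L_{\rm tr}(X,1)$ is nonzero, since no $\beta_i$ occurring in $L_{\rm tr}$ equals $1$. Hence $v_p(f(0))$ is a finite constant, and by Corollary \ref{geoeigen} it equals $v_p(\#{\rm Br}(\overline{X})^{G}[p^\infty])$, an integer.

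To apply Theorem \ref{fef} I must check that $f(\zeta-1)\neq0$ for every $\zeta\in W\setminus\{1\}$. We have $f(\zeta-1)=\prod_i(1-\beta_i\zeta)$, and $1-\beta_i\zeta=0$ would force $\beta_i=\zeta^{-1}$ to be a root of unity, which is excluded in $L_{\rm tr}$; so the hypothesis holds. The coefficient ring is $\mathcal{O}=\mathbb{Z}_p$ with $e=1$. Theorem \ref{fef} then gives, for $n$ large,
\[
v_p\Bigl(\prod_{\zeta\in W_n\setminus\{1\}} f(\zeta-1)\Bigr)=\mu p^n+\lambda n+\nu',
\]
with $\mu=\mu(f)$ and $\lambda=\lambda(f)$. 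Combining, $\#{\rm Br}(X_n)[p^\infty]=p^{\mu p^n+\lambda n+\nu}$ with $\nu=\nu'+v_p(f(0))-c$.

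It remains to show $\nu\in\mathbb{Z}$ and uniqueness. Since $\mu,\lambda$ are integers and the left side is an integral power of $p$, $\nu$ is an integer. For uniqueness of $(\mu,\lambda,\nu)$: if $\mu p^n+\lambda n+\nu=\mu' p^n+\lambda' n+\nu'$ for all large $n$, comparing growth rates forces equality of all three. The main care point is the identification of eigenvalues in the first step, namely that $L_{\rm tr}(X_n,t)$ has exactly the $\beta_i^{p^n}$ over the non-root-of-unity $\beta_i$. This uses the Tate conjecture via the decomposition in the Remark following Definition \ref{lfunction}, and it needs the observation that $\beta^{p^n}$ is a root of unity only if $\beta$ is.
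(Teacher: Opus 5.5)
Your proposal is correct and follows the paper's own proof essentially step by step. Both arguments use Artin--Tate together with Lemma \ref{ngeo} (you package this as Step 2 of Theorem \ref{geoarith}) to obtain $v_p(\#{\rm Br}(X_n)[p^\infty])=v_p(L_{\rm tr}(X_n,1))-{\rm const}$, then the factorization $L_{\rm tr}(X_n,1)=\prod_{\zeta\in W_n}L_{\rm tr}(X,\zeta)$, and finally Theorem \ref{fef} for $f(T)=L_{\rm tr}(X,1+T)$, with the $\zeta=1$ factor $v_p(L_{\rm tr}(X,1))$ absorbed into $\nu$. One minor simplification: the substitution $\zeta\mapsto\zeta^{-1}$ is unnecessary, since $f(\zeta-1)=L_{\rm tr}(X,\zeta)$ directly.
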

\begin{proof} By the Tate conjecture for K3 surfaces, we have $L_{{\rm tr}}(\zeta)\neq0$ for all $\zeta\in W\backslash\{1\}$, where $W:=\bigcup_{n\geq0}W_{n}$. Moreover,  
\[
\prod_{\zeta\in W_n}L_{{\rm tr}}(X, \zeta)=\prod_{i: \beta_{i}\notin W^{'}}\prod_{\zeta\in W_n}(1-\zeta\beta_i)=\prod_{i: \beta_{i}\notin W^{'}}(1-\beta_i^{p^n})=L_{{\rm tr}}(X_{n}, 1).
\]
On the other hand, by the Artin\blue{--}Tate conjecture for K3 surfaces (Theorem \ref{artintate}), we have
\[
\#{\rm Br}(X_n)\cdot |{\rm disc}\, {\rm NS}(X_n)|= q^{p^n}\cdot \prod_{i: \beta_{i}\notin W_{n}}(1-\beta_i^{p^n}).
\]

Since the $\mathbb{Z}$-module ${\rm NS}(\overline{X})$ is free, the number $|{\rm disc}\, {\rm NS}(X_{n})|$ is  constant for all sufficiently large $n\geq 0$. Let $n_{0}$ be a  sufficiently large integer obtained from  fundamental evaluation formula (Theorem \ref{fef}) and Lemma \ref{ngeo}. Therefore, for sufficiently large $n>n_{0}$, we have 
\begin{align}
v_p(\#\mbox{Br}(X_n))&=v_p(\prod_{i: \beta_{i}\notin W_{n}}(1-\beta_i^{p^n}))-v_{p}({\rm disc}\,{\rm NS}(X_{n}))\\
&=v_{p}(L_{{\rm tr}}(X_{n}, 1))-v_{p}({\rm disc}\,{\rm NS}(X_{n}))
\\
&=v_p(\prod_{\zeta\in W_n\backslash\{1\}}L_{{\rm tr}}(X, \zeta))+v_{p}(L_{{\rm tr}}(X, 1))-v_{p}({\rm disc}\,{\rm NS}(X_{n}))=\mu p^n+\lambda n+\nu,
\end{align}
where $\nu:=v_{p}(L_{{\rm tr}}(X, 1))-v_{p}({\rm disc}\,{\rm NS}(X_{n}))$ is a constant integer. 
\end{proof}
\begin{cor}\label{coinv} Let $X/\mathbb{F}_{q}$ be a K3 surface, and let $X_{\infty}:=X\times_{\mathbb{F}_{q}}{\rm Spec}(\mathbb{F}_{q^{p^{\infty}}})$. Then the following statements hold.
  \\(1) $\mu({\rm Br}(X_{\infty})[p^{\infty}]^{\lor})= \mu(L_{{\rm tr}}(X, 1+T))=0$. 
  \\(2) $\lambda({\rm Br}(X_{\infty})[p^{\infty}]^{\lor})=\lambda(L_{{\rm tr}}(X, 1+T))$.
  \\(3) $\nu({\rm Br}(X_{\infty})[p^{\infty}]^{\lor})=\nu(L_{{\rm tr}}(X, 1+T))=v_{p}(L_{{\rm tr}}(X, 1))-v_{p}({\rm disc}\,{\rm NS}(X_{n}))$ where $n$ is a sufficiently large integer such that ${\rm NS}(X_{n})={\rm NS}(X_{n+1})=\cdots$.
\end{cor}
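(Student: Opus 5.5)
Statements (2) and (3) follow from the uniqueness of the invariants $(\mu,\lambda,\nu)$ in an expression $p^{\mu p^n+\lambda n+\nu}$ valid for all large $n$. Theorem \ref{iwasawatype2} and Theorem \ref{iwasawatype1} both express the same number $\#{\rm Br}(X_n)[p^{\infty}]$ in this form. If $\mu p^n+\lambda n+\nu=\mu' p^n+\lambda' n+\nu'$ for infinitely many $n$, then dividing by $p^n$ and letting $n\to\infty$ gives $\mu=\mu'$. Subtracting and dividing by $n$ gives $\lambda=\lambda'$, and then $\nu=\nu'$. Hence $\mu({\rm Br}(X_{\infty})[p^{\infty}]^{\lor})=\mu(L_{\rm tr}(X,1+T))$, and the analogous equalities hold for $\lambda$ and $\nu$. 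The explicit value of $\nu$ in (3) is read off from the proof of Theorem \ref{iwasawatype1}, with $n$ chosen beyond the stabilization index of ${\rm NS}(X_n)$. One caveat: the $\nu$ from Theorem \ref{fef} a priori lies in $\mathbb{Q}$. Since the left-hand side is an integer exponent and $\mu,\lambda$ are integers here, $\nu$ is an integer as well.

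The substantive part is $\mu(L_{\rm tr}(X,1+T))=0$. I would compute it directly. Write
\[
L_{\rm tr}(X,1+T)=\prod_{i:\beta_i\notin W'}\bigl(1-\beta_i-\beta_iT\bigr).
\]
Work over $\mathcal{O}$, the ring of integers of $K=\mathbb{Q}_p(\beta_i)$. The $\beta_i$ are $p$-adic units: $F^{-1}$ acts invertibly on the $\mathbb{Z}_p$-lattice $H^2_{\text{\'et}}(\overline{X},\mathbb{Z}_p(1))$, and $\det$ is a unit because $p\nmid q$. Hence each linear factor has $T$-coefficient $-\beta_i$, which is a unit, so no factor is divisible by $\pi$. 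Since $\mathcal{O}[[T]]$ is a UFD and $\pi$ is prime in it, the product is not divisible by $\pi$ either. Equivalently, reduce modulo $\mathfrak{p}$: each factor becomes the nonzero polynomial $(1-\bar\beta_i)-\bar\beta_iT$, and the product of nonzero polynomials over the residue field is nonzero. This gives $N=0$ in the Weierstrass decomposition, so $\mu=0$. The independence of the choice of $K$ is covered by the remark after Definition \ref{polyiwa}.

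The main obstacle is the unit claim for the $\beta_i$. The $\beta_i$ are algebraic numbers of complex absolute value $1$ after the Tate twist, but we need $p$-adic units for $p\neq{\rm char}$. I would argue through the integral lattice: $L(X,t)=\det(1-F^{-1}t)$ has $\mathbb{Z}_p$-coefficients, so the $\beta_i$ are integral over $\mathbb{Z}_p$. The same holds for $F$, so the $\beta_i^{-1}$ are integral too. Therefore the $\beta_i$ are units, the $\mu=0$ computation goes through, and (1) follows by combining it with the uniqueness argument above.
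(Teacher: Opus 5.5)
Your treatment of (1), (2) and of the equality $\nu({\rm Br}(X_\infty)[p^\infty]^{\lor})=\nu(L_{\rm tr}(X,1+T))$ is the paper's argument: Theorems \ref{iwasawatype2} and \ref{iwasawatype1} describe the same numbers, and the exponent triple is unique. Your proof that $\mu(L_{\rm tr}(X,1+T))=0$ is also correct. The paper argues slightly differently: it writes $1-\beta_i(1+T)=(1+T)\bigl((1+T)^{-1}-\beta_i\bigr)$, whose second factor has $T$-coefficient $-1$, so only integrality of $\beta_i$ is needed. The unit property you call the main obstacle is in fact unnecessary, since $\beta_i$ and $1-\beta_i$ can never both lie in $\mathfrak{p}$.

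The gap is the explicit value of $\nu$ in (3), which you read off from the proof of Theorem \ref{iwasawatype1}. That proof replaces $v_p\bigl(\prod_{\zeta\in W_n\setminus\{1\}}L_{\rm tr}(X,\zeta)\bigr)$ by $\mu p^n+\lambda n$. Theorem \ref{fef} actually gives $\mu p^n+\lambda n+\nu_{\rm fef}$, and this $\nu_{\rm fef}$ is exactly the constant your caveat mentions before it is dropped. Set $c_k:=v_p\bigl(\prod_{\zeta\ \text{of exact order}\ p^k}L_{\rm tr}(X,\zeta)\bigr)$. Since $\mu=0$, we get $\nu_{\rm fef}=\sum_{k\geq 1}(c_k-\lambda)$, a finite sum. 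So what the argument really proves is
\[
\nu=\nu_{\rm fef}+v_p(L_{\rm tr}(X,1))-v_p({\rm disc}\,{\rm NS}(X_n))=v_p(L_{\rm tr}(X_n,1))-\lambda n-v_p({\rm disc}\,{\rm NS}(X_n))\quad (n\gg 0),
\]
and $\nu_{\rm fef}$ need not vanish.

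Here is a counterexample.
\begin{itemize}
\item Take $q=37$, $p=3$, $E_1\colon y^2=x^3-4x$ and $E_2\colon y^2=x^3+1$.
\item Their traces are $2$ and $-10$, both have full rational $2$-torsion, and they have CM by $\mathbb{Q}(i)$ and $\mathbb{Q}(\sqrt{-3})$ respectively, so they are not geometrically isogenous.
\item Let $a_n,b_n$ be the traces of $F^{3^n}$ on $E_1,E_2$ and $d_n=a_n-b_n$. Then
\[
d_{n+1}=d_n\bigl(3(a_n^2-37^{3^n})-3a_nd_n+d_n^2\bigr).
\]
\item The bracket equals $-27$ for $n=0$. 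Since $v_3(a_n^2-37^{3^n})=1$ for all $n$, it has valuation exactly $2$ once $v_3(d_n)\geq 4$.
\item Hence $d_0=12$, $d_1=-324$, and $v_3(d_n)=2n+2$ for $n\geq 1$.
\item Proposition \ref{formula} then gives $\#{\rm Br}(X_n)[3^\infty]=3^{4n+4}$ for $n\geq 1$, so $\lambda=\nu=4$.
\item On the other hand, $v_3(L_{\rm tr}(X,1))-v_3({\rm disc}\,{\rm NS}(X))=v_3(144/37)-0=2$.
\end{itemize}

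So the second equality in (3) is false as stated, and the paper's own proof has the same omission. It holds exactly when $\nu_{\rm fef}=0$. For example, this happens when every eigenvalue $\gamma\equiv 1 \bmod \mathfrak{p}$ of $F^{-1}$ on $T_p{\rm Br}(\overline{X})$ satisfies $v_p(1-\gamma)>1/(p-1)$, since then $c_k=\lambda$ for all $k\geq 1$.
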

\begin{proof} It suffices to show that $\mu(L_{{\rm tr}}(X, 1+T))$ is 0. Since
\begin{equation}
    (1+T)^{-1}=1-T+T^{2}-\cdots,
\end{equation}
we have 
\begin{equation}
    (1-\beta_{i}(1+T))=(1+T)(1-\beta_{i}-T+T^{2}-\cdots),
\end{equation}
where $\beta_{i}\notin W^{'}$. The polynomial $1+T$ is a unit of $\mathbb{Z}_{p}[[T]]$ and $p\notmid (1-\beta_{i}-T+T^{2}-\cdots)$. Hence we have $p\notmid \prod_{i: \beta_{i}\notin W^{'}}(1-\beta_{i}(1+T))=L_{{\rm tr}}(X, 1+T)$. This means $\mu(L_{{\rm tr}}(X, 1+T))=0$. 
\end{proof}
\subsection{Examples: Kummer surfaces associated to the products of two elliptic curves}
In this subsection, we compute the Iwasawa invariants for Kummer surfaces associated to the products of two non-isogenous elliptic curves. Throughout this subsection, we assume ${\rm Char}(\mathbb{F}_{q})\neq 2$. 

Let $p\notmid q$ be a prime. Let $E_{i}/\mathbb{F}_{q}, i=1, 2$, be elliptic curves, and let $A:=E_{1}\times E_{2}$. Let $-1: A\to A$ be the involution.  Then $-1$ has 16 fixed points $P_{1}, \cdots, P_{16}\in A(\overline{\mathbb{F}}_{q})$. The quotient $(A)/\langle-1\rangle$ has rational double point singularities at the images of the $P_{i}$. The Kummer surface $X:={\rm Km}(A)$ is the minimal resolution 
\begin{equation}\label{resolution}
    X\to A/\langle-1\rangle.
\end{equation}
\begin{lem}\label{abelalg} Suppose that $\overline{E}_{1}$ and $\overline{E}_{2}$ are not isogenous. Then $\rho(\overline{A})=2$ and we have
 ${\rm NS}(\overline{A})={\rm NS}(A)$.
\end{lem}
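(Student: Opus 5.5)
The plan is to decompose $\mathrm{NS}(\overline{A})$ by the standard description for a product of elliptic curves:
\[
\mathrm{NS}(\overline{E}_1\times\overline{E}_2)\cong \mathbb{Z}[\{0\}\times \overline{E}_2]\oplus \mathbb{Z}[\overline{E}_1\times\{0\}]\oplus \mathrm{Hom}(\overline{E}_1,\overline{E}_2).
\]
This identifies divisor classes modulo algebraic equivalence with the two fibre classes together with correspondences, i.e.\ homomorphisms $\overline{E}_1\to\overline{E}_2$. It is a classical fact; for instance, it follows from $\mathrm{NS}(B)\cong \mathrm{Hom}^{\rm sym}(B,B^\vee)$ for an abelian variety $B$, applied to $B=\overline{A}$ with $\overline{A}^\vee\cong \overline{E}_1\times\overline{E}_2$.

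Since $\overline{E}_1$ and $\overline{E}_2$ are not isogenous, $\mathrm{Hom}(\overline{E}_1,\overline{E}_2)=0$. Hence $\mathrm{NS}(\overline{A})$ is free of rank $2$, generated by the classes of $\{0\}\times\overline{E}_2$ and $\overline{E}_1\times\{0\}$, so $\rho(\overline{A})=2$. Alternatively, the rank can be read off from the Tate conjecture for abelian varieties (Tate's theorem): $\rho(\overline{A})$ is the rank of $\mathrm{End}$-type contributions $2+\mathrm{rank}\,\mathrm{Hom}(\overline{E}_1,\overline{E}_2)$. Either route gives the rank statement.

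For ${\rm NS}(\overline{A})={\rm NS}(A)$, I would show that both generators are defined over $\mathbb{F}_q$. The curves $\{0\}\times E_2$ and $E_1\times\{0\}$ are closed subschemes of $A$ over $\mathbb{F}_q$, since the origins are rational points. Their classes in $\mathrm{NS}(\overline{A})$ therefore lie in the image of $\mathrm{NS}(A)$ and are $G$-fixed. Since they generate $\mathrm{NS}(\overline{A})$, the natural map $\mathrm{NS}(A)\to\mathrm{NS}(\overline{A})$ is surjective. It is also injective, because for a geometrically integral variety with a rational point the Hochschild--Serre sequence gives $\mathrm{Pic}(A)\hookrightarrow\mathrm{Pic}(\overline{A})$, and algebraic equivalence is compatible. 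Alternatively, one can simply interpret ${\rm NS}(A)$ as the image in ${\rm NS}(\overline{A})$, as the paper implicitly does for $X_n$. This gives the equality.

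The main obstacle is justifying the decomposition of $\mathrm{NS}(\overline{E}_1\times\overline{E}_2)$ cleanly. I would cite it as a standard result on abelian surfaces, or derive it from the seesaw principle: a divisor class restricted to fibres $\{x\}\times\overline{E}_2$ and $\overline{E}_1\times\{y\}$ determines, after subtracting multiples of the fibre classes, a homomorphism $\overline{E}_1\to\mathrm{Pic}^0(\overline{E}_2)\cong\overline{E}_2$. If that homomorphism vanishes, the divisor is algebraically equivalent to a combination of the fibres.
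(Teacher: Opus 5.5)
Your proof is correct, and its core matches the paper's: both identify divisor classes on an abelian surface with symmetric homomorphisms, so that non-isogeny kills the off-diagonal contribution $\mathrm{Hom}(\overline{E}_1,\overline{E}_2)$. The differences lie in how you handle integrality and descent.

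The paper works only rationally. It uses Milne's Proposition 17.2: ${\rm NS}(\overline{A})\otimes\mathbb{Q}$ is the Rosati-invariant part of ${\rm End}^0(\overline{A})\cong{\rm End}^0(\overline{E}_1)\times{\rm End}^0(\overline{E}_2)$. From this it concludes that the two fibre classes span ${\rm NS}(\overline{A})\otimes\mathbb{Q}$. It then uses torsion-freeness: $G$ fixes a $\mathbb{Q}$-basis, hence acts trivially on ${\rm NS}(\overline{A})$, giving ${\rm NS}(\overline{A})={\rm NS}(\overline{A})^G={\rm NS}(A)$.

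You instead get the integral statement directly. You can use the decomposition $\mathbb{Z}\oplus\mathbb{Z}\oplus\mathrm{Hom}(\overline{E}_1,\overline{E}_2)$, or more elementarily your seesaw argument. If you take the seesaw route, add that the two fibre classes are independent because their intersection matrix is $\begin{pmatrix}0&1\\1&0\end{pmatrix}$. You then descend by noting that the generators are classes of $\mathbb{F}_q$-subvarieties.

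Each route buys something different:
\begin{itemize}
\item Your route gives a cleaner last step. The paper uses ${\rm NS}(\overline{A})^G={\rm NS}(A)$ without comment, but over $\mathbb{F}_q$ this actually needs ${\rm Br}(\mathbb{F}_q)=0$ and Lang's theorem $H^1(G,A^\vee(\overline{\mathbb{F}}_q))=0$. Your surjectivity of ${\rm NS}(A)\to{\rm NS}(\overline{A})$ is immediate.
\item The paper's route needs only the rational form of the N\'eron--Severi/endomorphism correspondence, which is the form most readily cited.
\end{itemize}

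One point to tighten: injectivity of ${\rm NS}(A)\to{\rm NS}(\overline{A})$ does not follow from Hilbert 90 on ${\rm Pic}$ alone. A class that becomes algebraically trivial over $\overline{\mathbb{F}}_q$ is an $\mathbb{F}_q$-point of the connected group ${\rm Pic}^0_{A/\mathbb{F}_q}$. The Poincar\'e bundle then shows it is algebraically trivial over $\mathbb{F}_q$. Alternatively, as you say, simply take ${\rm NS}(A)$ to mean the image.
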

\begin{proof} Let $O_{i}$ be the origins of $E_{i}$ for $i=1, 2$. Let $\pi_{i}: \overline{A}\to \overline{E}_{i},  i=1, 2,$ be the projections. Then the pull-back $\pi_{i}^{\ast}: {\rm Pic}(\overline{E}_{i})\to {\rm Pic}(\overline{A})$  sends the ample class $\mathcal{O}_{\overline{E}_{1}}([O_{1}])$ to $\mathcal{O}_{\overline{A}}([\{O_{1}\}\times \overline{E}_{2}])$ (resp. $\mathcal{O}_{\overline{A}}([\overline{E}_{1}\times \{O_{2}\}])$). Let 
\begin{equation}\mathcal{L}:=\mathcal{O}_{\overline{A}}([\overline{E}_{1}\times \{O_{2}\}])\otimes\mathcal{O}_{\overline{A}}([\{O_{1}\}\times \overline{E}_{2}])\in {\rm Pic}(\overline{A}).
\end{equation}
By \cite[Lemma 28.27.15]{stk}, the invertible sheaf $\mathcal{L}$ is ample. 

For $\mathcal{M}\in {\rm Pic}(\overline{A})$ (resp. $\mathcal{M}_{i}\in {\rm Pic}(\overline{E}_{i})$), let $\varphi_{\mathcal{M}}: A(\overline{\mathbb{F}}_{q})\to {\rm Pic} (\overline{A})$ (resp. $\varphi_{\mathcal{M}_{i}}: E_{i}(\overline{\mathbb{F}}_{q})\to {\rm Pic}(\overline{E}_{i}), i=1, 2)$ be given by $a\mapsto t_{a}^{\ast}\mathcal{M}\otimes \mathcal{M}^{-1}$ (resp. $a\mapsto t_{a}^{\ast}\mathcal{M}_{i}\otimes \mathcal{M}_{i}^{-1}$), where $t_{a}$ is the translation by $a$. Then the images of $\varphi_{\mathcal{M}}$ and $\varphi_{\mathcal{M}_{i}}$ are contained in ${\rm Pic}^{0}(\overline{A})$ and ${\rm Pic}^{0}(\overline{E}_{i})$ (\cite[Proposition 10.1]{milne0}), and the following diagram is commutative:

\begin{equation}\label{ellipic}
\begin{tikzcd}[column sep =7em]
    {\rm Pic}^{0}(\overline{E}_{1})\times {\rm Pic}^{0}(\overline{E}_{2})\ar[r, "\pi_{1}^{\ast}(-)\otimes \pi_{2}^{\ast}(-)"] & 
    {\rm Pic}^{0}(\overline{A})
    \\
    \overline{A}\ar[r, equal]\ar[u, two heads, "{(\varphi_{\mathcal{M}_{1}}, \varphi_{\mathcal{M}_{2}})}"] & \overline{A}\ar[u, two heads, "\varphi_{\mathcal{M}}"'].
\end{tikzcd}
\end{equation}

Since ${\rm Hom}(\overline{E}_{i}, \overline{E_{j}})=0$ for $i\neq j$, we have ${\rm End}(\overline{A})\cong {\rm End}(\overline{E}_{1})\times {\rm End}(\overline{E}_{2})$. In particular, we have ${\rm End}^{0}(\overline{A})\cong {\rm End}^{0}(\overline{E}_{1})\times {\rm End}^{0}(\overline{E}_{2})$, where ${\rm End}^{0}(-):={\rm End}(-)\otimes_{\mathbb{Z}}\mathbb{Q}$. Moreover, by the diagram \eqref{ellipic}, we have
\begin{align}
\varphi_{L}\circ (\varphi_{O_{1}}^{-1}\circ \varphi_{\mathcal{M}_{1}}, \varphi^{-1}_{O_{2}}\circ \varphi_{\mathcal{M}_{2}})
=\otimes(\varphi_{O_{1}}, \varphi_{O_{2}})\circ (\varphi_{O_{1}}^{-1}\circ \varphi_{\mathcal{M}_{1}}, \varphi^{-1}_{O_{2}}\circ \varphi_{\mathcal{M}_{2}})
=\otimes (\varphi_{\mathcal{M}_{1}}, \varphi_{\mathcal{M}_{2}}).
\end{align}
for all $\mathcal{M}\in {\rm Pic}(\overline{A})$ and $\mathcal{M}_{i}\in{\rm Pic}(\overline{E}_{i})$. Hence the following diagram
\[
\begin{tikzcd}[column sep = 7em]
   {\rm End}^{0}(\overline{E}_{1})\times {\rm End}^{0}(\overline{E}_{2}) \ar[r, "\sim"] &{\rm End}^{0}(\overline{A})\\
     {\rm NS}(\overline{E}_{1})\otimes\mathbb{Q}\times {\rm NS}(\overline{E}_{2})\otimes\mathbb{Q} \ar[u, hook]\ar[r, "\pi_{1}^{\ast}(-)\otimes \pi_{2}^{\ast}(-)"']& {\rm NS}(\overline{A})\otimes\mathbb{Q}\ar[u, hook]
\end{tikzcd}
\]
is commutative, where the vertical arrows are defined by ${\rm Pic}(\overline{A})\ni \mathcal{M}\mapsto \varphi^{-1}_{\mathcal{L}}\circ \varphi_{\mathcal{M}} \in {\rm End}^{0}(\overline{A})$ (resp. ${\rm Pic}(\overline{E}_{i})\ni \mathcal{M}_{i}\mapsto \varphi^{-1}_{O_{i}}\circ \varphi_{\mathcal{M}_{i}}\in {\rm End}^{0}(\overline{E}_{i})$). Let $(-)^{\dagger}: {\rm End}^{0}(\overline{A})\to {\rm End}^{0}(\overline{A})$ (resp. $(-)^{\dagger}: {\rm End}^{0}(\overline{E}_{i})\to {\rm End}^{0}(\overline{E}_{i})$) be the Rosati involution corresponding to the polarization $\mathcal{L}$ (resp. $\mathcal{O}_{\overline{E}_{i}}(\{O_{i}\})$). Then the isomorphism ${\rm End}^{0}(\overline{E}_{1})\times {\rm End}^{0}(\overline{E}_{2})\xrightarrow{\sim}{\rm End}^{0}(\overline{A})$ induces an isomorphism ${\rm End}^{0}(\overline{E}_{1})^{\dagger}\times {\rm End}^{0}(\overline{E}_{2})^{\dagger}\xrightarrow{\sim}{\rm End}^{0}(\overline{A})^{\dagger}$, where ${\rm End}^{0}(-)^{\dagger}$ is the invariant part by $(-)^{\dagger}$. By \cite[Proposition 17.2]{milne0} and the diagram, the bottom horizontal morphism  ${\rm NS}(\overline{E}_{1})\otimes\mathbb{Q}\times {\rm NS}(\overline{E}_{2})\otimes \mathbb{Q}\to {\rm NS}(\overline{A})\otimes\mathbb{Q}$ is an isomorphism. In particular, the $\mathbb{Q}$-vector space ${\rm NS}(\overline{A})\otimes \mathbb{Q}$ is generated by $\mathcal{O}_{\overline{A}}(E_{1}\times \{O_{2}\})$ and $\mathcal{O}_{\overline{A}}(\{O_{1}\}\times E_{2})$. Since the module ${\rm NS}(\overline{A})$ is a free $\mathbb{Z}$-module (\cite[Corollary 12.8]{milne0}), we have ${\rm NS}(\overline{A})\hookrightarrow{\rm NS}(\overline{A})\otimes\mathbb{Q}={\rm NS}(\overline{A})^{G}\otimes \mathbb{\mathbb{Q}}$, and thus ${\rm NS}(\overline{A})={\rm NS}(\overline{A})^{G}={\rm NS}(A)$.
\end{proof}
\begin{prop}\label{nskummer}Assume that $\overline{E}_{1}$ and $\overline{E}_{2}$ are not isogenous. 
If $\overline{E}_{i}[2]:=\{P\in E_{i}(\overline{\mathbb{F}}_{q})|~2P=O\}\subset E_{i}(\mathbb{F}_{q})$, then we have ${\rm NS}(X)={\rm NS}(\overline{X})$. 
\end{prop}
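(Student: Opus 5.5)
The plan is to show that $G$ acts trivially on ${\rm NS}(\overline{X})\otimes\mathbb{Q}$, and then to pass from this to ${\rm NS}(X)={\rm NS}(\overline{X})$.

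First, I describe ${\rm NS}(\overline{X})\otimes\mathbb{Q}$ through the standard structure of a Kummer surface. Let $\tilde A\to A$ be the blow-up of the sixteen $2$-torsion points $P_1,\dots,P_{16}$, with exceptional curves $\tilde E_1,\dots,\tilde E_{16}$. The involution $-1$ lifts to $\tilde A$, and $\tilde A/\langle -1\rangle\cong X$, so we have a finite double cover $\pi:\tilde A\to X$. Write $C_j\subset \overline{X}$ for the $(-2)$-curve lying over the image of $P_j$, so that $\pi^{*}C_j=2\tilde E_j$. Then $\pi_*\pi^*$ is multiplication by $2$. It follows that
\[
{\rm NS}(\overline{X})\otimes\mathbb{Q}=\pi_*\bigl({\rm NS}(\overline{\tilde A})\otimes\mathbb{Q}\bigr).
\]
The right side is spanned by $\pi_*$ of the pull-back of ${\rm NS}(\overline{A})\otimes\mathbb{Q}$ together with the classes $C_1,\dots,C_{16}$. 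By Lemma \ref{abelalg}, $\rho(\overline{A})=2$, so the first piece is spanned by the two classes $\pi_*\sigma^*[\overline{E}_1\times\{O_2\}]$ and $\pi_*\sigma^*[\{O_1\}\times\overline{E}_2]$, where $\sigma:\tilde A\to A$ is the blow-up. This gives $\rho(\overline{X})=18$, matching $\rho(\overline{A})+16$. I will cite this standard fact, e.g.\ $H^2(\overline{X})\cong H^2(\overline{A})\oplus\bigoplus_j\mathbb{Q}_p[C_j]$ compatibly with $G$, or else derive it from the displayed equality.

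Second, I check Galois invariance of each generator. By Lemma \ref{abelalg}, ${\rm NS}(\overline{A})={\rm NS}(A)$, so the two pulled-back classes are defined over $\mathbb{F}_q$; equivalently, the divisors $E_1\times\{O_2\}$ and $\{O_1\}\times E_2$ are already rational. The $2$-torsion points of $A$ are the pairs $(Q_1,Q_2)$ with $Q_i\in\overline{E}_i[2]$. The hypothesis $\overline{E}_i[2]\subset E_i(\mathbb{F}_q)$ makes all $P_j$ rational points of $A$. Hence each exceptional curve $C_j$ is defined over $\mathbb{F}_q$ and its class is $G$-fixed. Since these classes span ${\rm NS}(\overline{X})\otimes\mathbb{Q}$, the group $G$ acts trivially on ${\rm NS}(\overline{X})\otimes\mathbb{Q}$.

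Third, I conclude. ${\rm NS}(\overline{X})={\rm Pic}(\overline{X})$ is torsion-free for a K3 surface, so it embeds in ${\rm NS}(\overline{X})\otimes\mathbb{Q}$. Therefore $G$ acts trivially on ${\rm NS}(\overline{X})$, exactly as at the end of the proof of Lemma \ref{abelalg}. It remains to pass from ${\rm NS}(\overline{X})^G$ to ${\rm NS}(X)$. From sequence \eqref{hsexact} with $n=0$, ${\rm Pic}(X)\to{\rm Pic}(\overline{X})^G$ has cokernel inside ${\rm Br}(\mathbb{F}_q)=0$, so ${\rm Pic}(X)={\rm Pic}(\overline{X})^G$. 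Since $H^1(X,\mathcal{O}_X)=0$, ${\rm NS}={\rm Pic}$ here, and hence ${\rm NS}(X)={\rm NS}(\overline{X})$.

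The main obstacle is the first step: justifying rigorously, in arbitrary characteristic $\neq 2$, that the pushforwards from $A$ and the sixteen curves $C_j$ span ${\rm NS}(\overline{X})\otimes\mathbb{Q}$. I would handle it with the transfer argument through $\tilde A$. Since $\pi^*$ is injective on ${\rm NS}\otimes\mathbb{Q}$, this reduces to ${\rm NS}(\overline{\tilde A})\otimes\mathbb{Q}=\sigma^*{\rm NS}(\overline{A})\otimes\mathbb{Q}\oplus\bigoplus_j\mathbb{Q}[\tilde E_j]$, which is standard for blow-ups.
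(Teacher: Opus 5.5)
Your proposal is correct and follows essentially the paper's route: ${\rm NS}(\overline{X})\otimes\mathbb{Q}$ is spanned by the rank-$2$ contribution of ${\rm NS}(\overline{A})={\rm NS}(A)$ (Lemma~\ref{abelalg}) together with the sixteen exceptional curves, and these curves are defined over $\mathbb{F}_q$ because all $2$-torsion points of $A$ are rational. You differ only in two points: you justify this decomposition by the transfer $\pi_*\pi^*=2$ through $\tilde A$ rather than quoting the $G$-equivariant Kummer isometry on $H^2$, and you make explicit the passage from ${\rm NS}\otimes\mathbb{Q}$ to ${\rm NS}$ (torsion-freeness, then ${\rm Pic}(X)={\rm Pic}(\overline{X})^{G}$ via ${\rm Br}(\mathbb{F}_q)=0$), which the paper leaves implicit. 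For that last step, cite the Hochschild--Serre sequence before taking $p$-primary parts, since \eqref{hsexact} as displayed only concerns $[p^{\infty}]$.
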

\begin{proof}
    By the construction of Kummer surfaces, there exists an isomorphism 
    \begin{equation}\label{kummercoh}
        (H^{2}_{\text{\'et}}(\overline{X}, \mathbb{Q}_{p}(1)), (-, -)_{\overline{X}})\cong (H^{2}_{\text{\'et}}(\overline{A}, \mathbb{Q}_{p}(1)), 2(-, -)_{\overline{A}})\oplus \bigoplus^{16}_{i=1}\mathbb{Q}_{p}[D_{i}]
    \end{equation}
    preserving the action of $G={\rm Gal}(\overline{\mathbb{F}}_{q}/\mathbb{F}_{q})$, where $[D_{i}]\in {\rm NS}(\overline{X})$ are the cohomology classes of the exceptional divisors of the resolution \eqref{resolution} and $(-, -)_{\overline{X}}$ (resp. $(-, -)_{\overline{A}}$) is the cup-product. Since $(\overline{E}_{1}\times\overline{E}_{2})[2]\subset(E_{1}\times E_{2})(\mathbb{F}_{q})$, this means $P_{1}, \cdots, P_{16}\in A(\mathbb{F}_{q})$. Hence the exceptional divisors $D_{1}, \cdots, D_{16}$ are defined over $\mathbb{F}_{q}$, i.e., $[D_{i}]\in{\rm NS}(X)\otimes \mathbb{Q}$. By Lemma \ref{abelalg} and the isometry \eqref{kummercoh}, we have ${\rm NS}(\overline{X})\otimes \mathbb{Q}={\rm NS}(\overline{A})\otimes \mathbb{Q}\oplus \bigoplus^{16}_{i=1}\mathbb{Q}[D_{i}]={\rm NS}(A)\otimes \mathbb{Q}\oplus \bigoplus^{16}_{i=1}\mathbb{Q}[D_{i}]={\rm NS}(X)\otimes\mathbb{Q}$. 
\end{proof}
\begin{prop}\label{formula} Assume that $\overline{E}_{1}$ and $\overline{E}_{2}$ are not isogenous. Suppose $\overline{E}_{i}[2]:=\{P\in E_{i}(\overline{\mathbb{F}}_{q})|~2P=O\}\subset E_{i}(\mathbb{F}_{q})$ for $i=1, 2$. Then, for every  odd prime $p$ and $n\geq0$, we have  
    \begin{equation}
        \#{\rm Br}(X_{n})[p^{\infty}]=p^{2v_{p}(\# E_{1}(\mathbb{F}_{q^{p^{n}}})-\# E_{2}(\mathbb{F}_{q^{p^{n}}}))}.
    \end{equation} 
\end{prop}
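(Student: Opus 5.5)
We plan to apply the Artin--Tate formula (Theorem \ref{artintate}) to $X_n$ and identify both sides explicitly. First, the decomposition \eqref{kummercoh} gives, as $G$-modules, $H^{2}_{\text{\'et}}(\overline{X},\mathbb{Q}_p(1))\cong H^{2}_{\text{\'et}}(\overline{A},\mathbb{Q}_p(1))\oplus\mathbb{Q}_p^{16}$ with trivial action on the $16$ exceptional classes (they are defined over $\mathbb{F}_q$ by Proposition \ref{nskummer}). Moreover $H^2(\overline{A})=\wedge^2(H^1(\overline{E}_1)\oplus H^1(\overline{E}_2))=\wedge^2H^1(\overline{E}_1)\oplus\wedge^2H^1(\overline{E}_2)\oplus H^1(\overline{E}_1)\otimes H^1(\overline{E}_2)$. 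Writing $\alpha_1,\bar\alpha_1$ and $\alpha_2,\bar\alpha_2$ for the Frobenius eigenvalues of $E_1,E_2$ (with $\alpha_i\bar\alpha_i=q$), the eigenvalues $\beta$ of $F^{-1}$ on $H^2(\overline{X},\mathbb{Q}_p(1))$ are $1$ (eighteen times, from $\wedge^2$'s and the $D_i$) and the four numbers $\alpha_1\alpha_2/q,\ \alpha_1\bar\alpha_2/q,\ \bar\alpha_1\alpha_2/q,\ \bar\alpha_1\bar\alpha_2/q$ (up to replacing by inverses, which does not affect the argument). By Proposition \ref{nskummer} and Lemma \ref{abelalg}, $\rho(X_n)=\rho(\overline{X})=18$ for all $n$, so none of the four transcendental eigenvalues is a root of unity (Tate conjecture), and the Artin--Tate product over $\beta_i^{p^n}\neq1$ is exactly over these four.

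Next compute, with $a=\alpha_1^{p^n}$ etc. and $Q=q^{p^n}$:
\begin{equation}
Q\prod(1-\beta^{p^n})=Q\Bigl(1-\tfrac{a c}{Q}\Bigr)\Bigl(1-\tfrac{a\bar c}{Q}\Bigr)\Bigl(1-\tfrac{\bar a c}{Q}\Bigr)\Bigl(1-\tfrac{\bar a\bar c}{Q}\Bigr)=Q^{-3}(Q-ac)(Q-a\bar c)(Q-\bar a c)(Q-\bar a\bar c).
\end{equation}
Using $Q=a\bar a=c\bar c$, we get $(Q-ac)(Q-\bar a\bar c)=a\bar c(\bar a-c)(c-\bar a)\cdot(\text{sign})$; more cleanly, $Q-ac=a(\bar a-c)$, $Q-\bar a\bar c=\bar a(a-\bar c)$, $Q-a\bar c=a(\bar a-\bar c)$, $Q-\bar a c=\bar a(a-c)$. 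The product becomes $Q^{2}\,(a-c)(\bar a-\bar c)(a-\bar c)(\bar a-c)=Q^2\bigl((a+\bar a)-(c+\bar c)\bigr)^2$ after the standard identity $(a-c)(a-\bar c)(\bar a-c)(\bar a-\bar c)=(a+\bar a-c-\bar c)^2$ when $a\bar a=c\bar c$ (I will verify this expansion; it is the key algebra). Hence the right side is $Q^{-1}(a_{n}(E_1)-a_n(E_2))^2$ with $a_n(E_i)=Q+1-\#E_i(\mathbb{F}_{q^{p^n}})$, i.e. $q^{-p^n}(\#E_1(\mathbb{F}_{q^{p^n}})-\#E_2(\mathbb{F}_{q^{p^n}}))^2$, whose $p$-adic valuation is $2v_p(\#E_1-\#E_2)$ since $p\nmid q$. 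The normalization constant $q$ in Theorem \ref{artintate} should combine with the twist to make this an integer; I will check powers of $q$ carefully, but they are prime to $p$ and irrelevant for the $p$-part.

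It remains to show $v_p(|{\rm disc}\,{\rm NS}(X_n)|)=0$ for odd $p$. Since ${\rm NS}(X_n)={\rm NS}(\overline{X})$, we need the discriminant of ${\rm NS}$ of ${\rm Km}(E_1\times E_2)$ with non-isogenous factors to be a power of $2$. I expect this to be the main obstacle: the lattice is not simply the orthogonal sum ${\rm NS}(A)(2)\oplus\langle-2\rangle^{16}$ but an overlattice of it (the Kummer lattice saturation), of index a power of $2$. The sublattice has discriminant $\pm 2^{2}\cdot 2^{16}$ since ${\rm NS}(\overline{A})$ is the hyperbolic plane $U$ (generated by $E_1\times O_2$, $O_1\times E_2$ by Lemma \ref{abelalg}), so ${\rm disc}\,{\rm NS}(\overline{X})$ equals $2^{18}$ divided by a square of a power of $2$; thus it is a power of $2$, coprime to odd $p$. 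Plugging into the Artin--Tate formula for $X_n$ and taking $p$-parts yields the claimed formula.
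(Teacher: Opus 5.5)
Your proposal is correct and follows essentially the same route as the paper. You apply Artin--Tate to $X_n$, use the Kummer decomposition of $H^2$ together with K\"unneth to isolate the four transcendental eigenvalues, and use the sublattice $U(2)\oplus\langle -2\rangle^{16}$ of discriminant $\pm 2^{18}$ to get $v_p(|{\rm disc}\,{\rm NS}(X_n)|)=0$ for odd $p$.

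The only slip is your ``standard identity''. It should read $(a-c)(a-\overline{c})(\overline{a}-c)(\overline{a}-\overline{c})=Q\,(a+\overline{a}-c-\overline{c})^2$, so that $Q\prod(1-\beta^{p^n})=(\#E_1(\mathbb{F}_{q^{p^n}})-\#E_2(\mathbb{F}_{q^{p^n}}))^2$ is an integer. As you anticipated, the discrepancy is a power of $q$, which is a $p$-adic unit, so the conclusion is unaffected.
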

\begin{proof}
    We first show that the discriminant of ${\rm NS}(\overline{X})$ is a power of $2$. Consider the sublattice
    \begin{equation}
        L:=(\mathbb{Z}[\overline{E}_{1}\times \{O_{2}\}]\oplus \mathbb{Z}[\{O_{1}\}\times \overline{E}_{2}], 2(-, -)_{\overline{A}})\oplus \bigoplus^{16}_{i=1}\mathbb{Z}[D_{i}] \subset ({\rm NS}(\overline{X}), (-, -)_{\overline{X}}).
    \end{equation}
   The self-intersection $([\overline{E}_{1}\times \{O_{2}\}])^{2}_{\overline{A}}$ is equal to $0$. Indeed, choose $P_{2}\in E_{2}(\overline{{\mathbb{F}}}_{q})$ such that $P_{2}\neq O_{2}$. Since $[\overline{E}_{1}\times \{O_{2}\}]$ is algebraically equivalent to $[E_{1}\times \{P_{2}\}]$, we have $([\overline{E}_{1}\times \{O_{2}\}])^{2}_{\overline{A}}=([\overline{E}_{1}\times\{O_{2}\}])_{\overline{A}}.([\overline{E}_{1}\times \{O_{2}\}])_{\overline{A}}=0$, where $(-).(-)$ denotes the intersection number. Likewise, we have $([\{O_{1}\}\times \overline{E}_{2}])^{2}_{\overline{A}}=0$. On the other hand, since $O_{i}$ is $\overline{\mathbb{F}}_{q}$-rational, that is, the residue field $\kappa(O_{i})$ is $\overline{\mathbb{F}}_{q}$. By \cite[Lemma 33.16.7]{stk}, there is a canonical isomorphism $T_{\overline{A}/\overline{\mathbb{F}}_{q}, (O_{1}, O_{2})}\cong T_{\overline{E}_{1}/\overline{\mathbb{F}}_{q}, O_{1}}\oplus T_{\overline{E}_{2}/\overline{\mathbb{F}}_{q}, O_{2}}$. Hence the subvarieties $\overline{E}_{1}\times\{O_{2}\}$ and $\{O_{1}\}\times\overline{E}_{2}$ intersect transversally. Thus, by \cite[V-Theorem 1.1]{har}, we obtain $([\overline{E}_{1}\times\{O_{2}\}])_{\overline{A}}.([\{O_{1}\}\times\overline{E}_{2}])_{\overline{A}}=1$. Hence we have
   \begin{equation}
       (\mathbb{Z}[\overline{E}_{1}\times\{O_{2}\}]\oplus \mathbb{Z}[\{O_{1}\}\times \overline{E}_{2}], 2(-, -)_{\overline{A}})\cong U(2),
   \end{equation}
   where $U=\begin{pmatrix}0&1\\1&0\end{pmatrix}$ is the hyperplane. Since $|{\rm disc}(L)|=2^{18}$, we have ${\rm disc}\,{\rm NS}(\overline{X})\mid2^{18}$. This shows that ${\rm disc}\,{\rm NS}(\overline{X})$ is a power of $2$.

   By the Artin-Tate conjecture  (Theorem \ref{artintate}) and Proposition \ref{nskummer}, we have
   \begin{equation}\label{brkummer}
       \#{\rm Br}
(X)[p^{\infty}]=p^{v_{p}({\rm det}(1-F:T_{p}{\rm Br}(\overline{X})^{\ast})}.
\end{equation}
Let $\alpha_{i}, \beta_{i}$ be the eigenvalues of the geometric Frobenius $F^{-1}$ on $H^{1}_{\text{\'et}}(\overline{E}_{i}, \mathbb{Q}_{p})$. The Weil conjecture for elliptic curves (for example, see \cite[V-Theorem 2.3.1]{sil}) says 
\begin{equation}
    \alpha_{i}+\beta_{i}=q+1-\# E_{i}(\mathbb{F}_{q}), ~~~\alpha_{i}\beta_{i}=q.
\end{equation}
By the K\"unneth formula, we have
\begin{equation}\label{kunneth}
    H^{2}_{\text{\'et}}(\overline{A}, \mathbb{Q}_{p}(1))\cong H^{2}_{\text{\'et}}(\overline{E}_{1}, \mathbb{Q}_{p}(1))\oplus H^{1}_{\text{\'et}}(\overline{E}_{1}, \mathbb{Q}_{p})\otimes H^{1}_{\text{\'et}}(\overline{E}_{2}, \mathbb{Q}_{p})(1)\oplus H^{2}_{\text{\'et}}(\overline{E}_{2}, \mathbb{Q}_{p}(1)).
\end{equation}
Since the cohomology class $[O_{1}]\in H^{2}_{\text{\'et}}(\overline{E}_{1}, \mathbb{Q}_{p}(1))$ (resp. $[O_{2}]\in H^{2}_{\text{\'et}}(\overline{E}_{2}, \mathbb{Q}_{p}(1))$) corresponds to the class $[\{O_{1}\}\times\overline{E}_{2}]\in H^{2}_{\text{\'et}}(\overline{A}, \mathbb{Q}_{p}(1))$ (resp. $[\overline{E}_{1}\times\{O_{2}\}]\in H^{2}_{\text{\'et}}(\overline{A}, \mathbb{Q}_{p}(1))$) via projections, the decomposition \eqref{kunneth} induces an isomorphism
\begin{equation}
    T(\overline{A})_{p}\cong H^{1}_{\text{\'et}}(\overline{E}_{1}, \mathbb{Q}_{p})\otimes H^{1}_{\text{\'et}}(\overline{E}_{2}, \mathbb{Q}_{p})(1),
\end{equation}
where $T(\overline{A})_{p}$ is the orthogonal complement of ${\rm NS}(\overline{A})\otimes\mathbb{Q}_{p}$ with respect to the cup-product of $H^{2}_{\text{\'et}}(\overline{A}, \mathbb{Q}_{p}(1))$. On the other hand, \eqref{kummercoh} says 
\begin{equation}
    T_{p}{\rm Br}(\overline{X})\cong T(\overline{A})_{p}\cong H^{1}_{\text{\'et}}(\overline{E}_{1}, \mathbb{Q}_{p})\otimes H^{1}_{\text{\'et}}(\overline{E}_{2}, \mathbb{Q}_{p})(1),
\end{equation}
and thus we have 
\begin{align}
    {\rm det}(1-F: T_{p}{\rm Br}(\overline{X})^{\ast})&={\rm det}(1-F^{-1}: T_{p}{\rm Br}(\overline{X}))
    \\
    &=(1-\frac{\alpha_{1}\alpha_{2}}{q})(1-\frac{\alpha_{1}\beta_{2}}{q})(1-\frac{\beta_{1}\alpha_{2}}{q})(1-\frac{\beta_{1}\beta_{2}}{q})
    \\&=\frac{(\alpha_{1}+\beta_{1}-(\alpha_{2}+\beta_{2}))^{2}}{q}
    \\&=\frac{(\# E_{1}(\mathbb{F}_{q})-\# E_{2}(\mathbb{F}_{q}))^{2}}{q}.
\end{align}
The consequence follows from this calculation and \eqref{brkummer}.
\end{proof}
\begin{ex} Consider the case $q=29$. Let $E_{i}/\mathbb{F}_{29}$ be elliptic curves given in the following table:
\begin{center}
\begin{tblr}{|l|r|r|r|r|} \hline
    \SetCell[c=1]{c}equation  & \SetCell[c=1]{c}  L-function & \SetCell[c=1]{c}${\rm End}^{0}(E_{i})$ &\SetCell[c=1]{c}$\#E_{i}(\mathbb{F}_{29})$&\SetCell[c=1]{c}LMFDB label \cite{lmfdb}\\ \hline
  $ E_{1}: y^{2}=x^{3}-x$ & $1+10t+29t^{2}$ &$\mathbb{Q}(\sqrt{-1})$ & 40 & 1.29.k \\
   $E_{2}: y^{2}=x^{3}+x^{2}-2x$ &$1-2t+29t^{2}$  &$\mathbb{Q}(\sqrt{-7})$ &28& 1.29.ac \\ \hline
 \end{tblr}
\end{center}
Since the endomorphism algebras are different  from each other, $E_{1}$ and $E_{2}$ are not isogenous over $\overline{\mathbb{F}}_{29}$. Since $x^{3}-x=x(x-1)(x+1)$, we have $\overline{E}_{1}[2]=\{O_{1}, (0, 0), (1, 0), (-1, 0)\}$. For the same reason, we have $\overline{E}_{2}[2]=\{O_{2}, (0, 0), (1, 0), (-2, 0)\}$. Hence, by Proposition \ref{formula},  we obtain $\#{\rm Br}(X_{n})[p^{\infty}]=p^{2v_{p}(\# E_{1}(\mathbb{F}_{29^{p^{n}}})-\# E_{2}(\mathbb{F}_{29^{p^{n}}}))}$ for every $n\geq0$. We now specialize to the case $p=3$. Let $\alpha_{i}, \beta_{i}$ be the eigenvalues of the L-function of $E_{i}/\mathbb{F}_{29}$. Then the eigenvalues of the $L$-function for $E_{i}\times_{\mathbb{F}_{29}}{\rm Spec}(\mathbb{F}_{29^{3^{n}}})$ are given by $\alpha_{i}^{3^n}$ and $\beta_{i}^{3^n}$. Since $\alpha_{i}\beta_{i}=29$, we have 
\begin{equation}
    \alpha^{3}_{i}+\beta^{3}_{i}=(\alpha_{i}+\beta_{i})^{3}-3\cdot 29(\alpha_{i}+\beta_{i}).
\end{equation}
This implies 
\begin{equation}
    \alpha^{3^{n}}_{i}+\beta^{3^{n}}_{i}=(\alpha^{3^{n-1}}_{i}+\beta^{3^{n-1}}_{i})^{3}-3\cdot 29^{3^{n-1}}(\alpha^{3^{n-1}}_{i}+\beta^{3^{n-1}}_{i}).
\end{equation}
Let $d_{n}:=\alpha^{3^{n}}_{1}+\beta^{3^{n}}_{1}-(\alpha^{3^{n}}_{2}+\beta^{3^{n}}_{2})$. Then we have
\begin{align}
    d_{n}&=(\alpha^{3^{n-1}}_{1}+\beta^{3^{n-1}}_{1})^{3}-3\cdot 29^{3^{n-1}}(\alpha^{3^{n-1}}_{1}+\beta^{3^{n-1}}_{1})\\
    &\hspace{15em}-((\alpha^{3^{n-1}}_{2}+\beta^{3^{n-1}}_{2})^{3}-3\cdot 29^{3^{n-1}}(\alpha^{3^{n-1}}_{2}+\beta^{3^{n-1}}_{2}))
    \\&=(\alpha^{3^{n-1}}_{1}+\beta^{3^{n-1}}_{1})^{3}-(\alpha^{3^{n-1}}_{2}+\beta^{3^{n-1}}_{2})^{3}-3\cdot 29^{3^{n-1}}(\alpha^{3^{n-1}}_{1}+\beta^{3^{n-1}}_{1}-(\alpha^{3^{n-1}}_{2}+\beta^{3^{n-1}}_{2}))
    \\&=d_{n-1}((\alpha^{3^{n-1}}_{1}+\beta^{3^{n-1}}_{1})^{2}+(\alpha^{3^{n-1}}_{1}+\beta^{3^{n-1}}_{1})(\alpha^{3^{n-1}}_{2}+\beta^{3^{n-1}}_{2})+(\alpha^{3^{n-1}}_{2}+\beta^{3^{n-1}}_{2})^{2}-3\cdot29^{3^{n-1}}).
\end{align}
By induction, using $v_{3}(d_{0})=v_{3}(-40+28)=v_{3}(-12)=1$, we  see that $v_{3}(d_{n})=n+1$. Hence,
\begin{equation}
    \#{\rm Br}(X_{n})[3^{\infty}]=3^{2n+2}.
\end{equation}
This means that $\mu=0, \lambda=2, \nu=2.$
    
\end{ex}

\section{Iwasawa Main Conjecture}
In this section, we prove the Iwasawa main conjecture (Theorem \ref{Iwasawa main conj k3}).

For every $n\geq 0$, recall that $\Gamma_{n}$ is the Galois group ${\rm Gal}(\mathbb{F}_{q^{p^{n}}}/\mathbb{F}_{q})$. Let $\gamma_{n}\in\Gamma_{n}$ be the restriction of the arithmetic Frobenius $F$ to $\mathbb{F}_{q^{p^{n}}}$. 

Let $\Lambda_{n}:=\mathbb{Z}_{p}[\Gamma_{n}]\cong \Lambda/(\gamma^{p^{n}}-1)\Lambda$. In this section, we consider the $\Lambda_{n}$-module: 
\begin{equation}\label{lambdan}
(T_{p}{\rm Br}(\overline{X})^{\ast}\otimes_{\mathbb{Z}_{p}}\Lambda_{n})/(1-F\gamma^{-1}_{n})(T_{p}{\rm Br}(\overline{X})^{\ast}\otimes_{\mathbb{Z}_{p}}\Lambda_{n}).
\end{equation}
The modules in the form \eqref{lambdan} form an inverse system via the restriction $\Lambda_{n}\to \Lambda_{m}$ for $n\geq m$. 
\begin{prop}\label{fgamma}There exists a canonical isomorphism of $\Lambda_{n}$-modules:
\begin{equation}
T_{p}{\rm Br}(\overline{X})^{\ast}/(1-F^{p^{n}})T_{p}{\rm Br}(\overline{X})^{\ast}\cong (T_{p}{\rm Br}(\overline{X})^{\ast}\otimes_{\mathbb{Z}_{p}}\Lambda_{n})/(1-F\gamma^{-1}_{n})(T_{p}{\rm Br}(\overline{X})^{\ast}\otimes_{\mathbb{Z}_{p}}\Lambda_{n}).
\end{equation}
\end{prop}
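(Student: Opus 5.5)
Write $M:=T_{p}{\rm Br}(\overline{X})^{\ast}$, a free $\mathbb{Z}_{p}$-module of rank $22-\rho(\overline{X})$ with a continuous action of $F$. We regard $M\otimes_{\mathbb{Z}_{p}}\Lambda_{n}$ as the induced module $M\otimes\mathbb{Z}_{p}[\Gamma_{n}]=\bigoplus_{i=0}^{p^{n}-1}M\otimes\gamma_{n}^{i}$. Here $\Lambda_{n}$ acts through the second factor, and $F$ acts diagonally on the first factor only, so $F\gamma_{n}^{-1}$ acts by $m\otimes\gamma_{n}^{i}\mapsto Fm\otimes\gamma_{n}^{i-1}$. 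The plan is to write down the obvious map and check by hand that it is an isomorphism. This is a Shapiro-type (induction/coinvariants) identity.

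First, define
\[
\Phi:M\otimes_{\mathbb{Z}_{p}}\Lambda_{n}\to M/(1-F^{p^{n}})M,\qquad m\otimes\gamma_{n}^{i}\mapsto F^{-i}m \bmod (1-F^{p^{n}})M .
\]
This is well defined because $\gamma_{n}^{i}$ depends only on $i \bmod p^{n}$ and $F^{-p^{n}}\equiv 1$ on the quotient. It is $\mathbb{Z}_{p}$-linear and clearly surjective, since $i=0$ already gives every class. We make the target a $\Lambda_{n}$-module by letting $\gamma_{n}$ act as $F^{-1}$ (or as $F$, depending on the convention fixed in Proposition \ref{dualtate}; if needed we use $F^{i}$ instead, and the computation is symmetric). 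With this action, $\Phi$ is $\Lambda_{n}$-linear.

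Next, $\Phi$ kills $(1-F\gamma_{n}^{-1})(M\otimes\Lambda_{n})$, because
\[
\Phi(Fm\otimes\gamma_{n}^{i-1})=F^{-(i-1)}Fm=F^{-i}F^{2}m
\]
has to be compared with $\Phi(m\otimes\gamma_{n}^{i})=F^{-i}m$. This is exactly where the sign convention matters. I would choose $\Phi(m\otimes\gamma_{n}^{i})=F^{i}m$, so that $\Phi(Fm\otimes\gamma_{n}^{i-1})=F^{i-1}Fm=F^{i}m$ and the relation is respected. In that case $\gamma_{n}$ acts on the target as $F$, which matches the $\Gamma_{n}$-action on ${\rm coker}(1-F^{p^{n}})$ described before Proposition \ref{dualtate}. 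So $\Phi$ descends to a surjection $\bar\Phi$ from the right-hand side onto the left-hand side.

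For injectivity I construct an inverse $\Psi:M/(1-F^{p^{n}})M\to(M\otimes\Lambda_{n})/(1-F\gamma_{n}^{-1})$ by $m\mapsto m\otimes 1$. Modulo the relation $m\otimes\gamma_{n}^{i}\equiv F^{i}m\otimes 1$ (obtained by iterating $Fm'\otimes\gamma_{n}^{i-1}\equiv m'\otimes\gamma_{n}^{i}$), every element is congruent to some $m\otimes 1$; hence $\Psi$ is surjective and $\Psi\circ\Phi={\rm id}$ on generators. The step I expect to need care is checking that $\Psi$ is well defined, that is, that $(1-F^{p^{n}})m\otimes 1$ lies in the relation submodule. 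Iterating the relation around the full cycle $i=p^{n}$ gives $F^{p^{n}}m\otimes\gamma_{n}^{p^{n}}=F^{p^{n}}m\otimes 1\equiv m\otimes 1$, which is exactly what is needed. Finally, $\Phi\circ\Psi={\rm id}$ is immediate, so $\bar\Phi$ is a $\Lambda_{n}$-isomorphism. The maps are visibly compatible with $\Lambda_{n}\to\Lambda_{m}$ for $n\ge m$, which gives canonicity.
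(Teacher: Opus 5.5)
Your proposal is correct and is essentially the paper's proof. The paper likewise uses the relation $F\alpha\otimes f\equiv\alpha\otimes\gamma_{n}f$ to see that $\alpha\mapsto\alpha\otimes 1$ descends to $T_{p}{\rm Br}(\overline{X})^{\ast}/(1-F^{p^{n}})$, and inverts it by $\alpha\otimes\gamma_{n}^{i}\mapsto F^{i}\overline{\alpha}$, with $\gamma_{n}$ acting as $F$. That inverse is exactly your corrected $\Phi$; in a write-up, drop the initial $F^{-i}$ version, since it does not kill the relations.
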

\begin{proof} In the module $(T_{p}{\rm Br}(\overline{X})^{\ast}\otimes_{\mathbb{Z}_{p}}\Lambda_{n})/(1-F\gamma^{-1}_{n})(T_{p}{\rm Br}(\overline{X})^{\ast}\otimes_{\mathbb{Z}_{p}}\Lambda_{n})$, we have 
\begin{equation}\label{rel}
F\alpha\otimes f=\alpha \otimes \gamma_{n}f
\end{equation}
for all $\alpha\in T_{p}{\rm Br}(\overline{X})^{\ast}, f\in \Lambda_{n}$. In particular, we have $F^{p^{n}}\alpha\otimes f=\alpha\otimes \gamma^{p^{n}}_{n}f=\alpha\otimes f$. Hence the natural $G$-homomorphism $T_{p}{\rm Br}(\overline{X})^{\ast}\to (T_{p}{\rm Br}(\overline{X})^{\ast}\otimes_{\mathbb{Z}_{p}}\Lambda_{n})/(1-F\gamma^{-1}_{n})(T_{p}{\rm Br}(\overline{X})^{\ast}\otimes_{\mathbb{Z}_{p}}\Lambda_{n}), \alpha\mapsto \alpha\otimes 1$ induces a $\Lambda_{n}$-homomorphism:
\begin{equation}\label{rel1}
T_{p}{\rm Br}(\overline{X})^{\ast}/(1-F^{p^{n}})T_{p}{\rm Br}(\overline{X})^{\ast}\to  (T_{p}{\rm Br}(\overline{X})^{\ast}\otimes_{\mathbb{Z}_{p}}\Lambda_{n})/(1-F\gamma^{-1}_{n})(T_{p}{\rm Br}(\overline{X})^{\ast}\otimes_{\mathbb{Z}_{p}}\Lambda_{n}).
\end{equation}
Conversely, let $\overline{\alpha}$ be the class of $\alpha\in T_{p}{\rm Br}(\overline{X})^{\ast}$ in the quotient $T_{p}{\rm Br}(\overline{X})^{\ast}/(1-F^{p^{n}})T_{p}{\rm Br}(\overline{X})^{\ast}$. In the quotient $(T_{p}{\rm Br}(\overline{X})^{\ast}\otimes_{\mathbb{Z}_{p}}\Lambda_{n})/(1-F\gamma^{-1}_{n})(T_{p}{\rm Br}(\overline{X})^{\ast}\otimes_{\mathbb{Z}_{p}}\Lambda_{n})$, we obtain $\overline{F\alpha}=\gamma_{n}\overline{\alpha}$. Hence the natural $\Lambda_{n}$-homomorphism  $T_{p}{\rm Br}(\overline{X})^{\ast}\otimes_{\mathbb{Z}_{p}}\Lambda_{n} \to T_{p}{\rm Br}(\overline{X})^{\ast}/(1-F^{p^{n}})T_{p}{\rm Br}(\overline{X})^{\ast}$ via $\alpha\otimes \gamma_{n}\mapsto \gamma_{n}\overline{\alpha}$ induces a $\Lambda_{n}$-homomorphism
\begin{equation}\label{rel2}
(T_{p}{\rm Br}(\overline{X})^{\ast}\otimes_{\mathbb{Z}_{p}}\Lambda_{n})/(1-F\gamma^{-1}_{n})(T_{p}{\rm Br}(\overline{X})^{\ast}\otimes_{\mathbb{Z}_{p}}\Lambda_{n})\to T_{p}{\rm Br}(\overline{X})^{\ast}/(1-F^{p^{n}})T_{p}{\rm Br}(\overline{X})^{\ast}.
\end{equation}
The morphisms \eqref{rel1} and \eqref{rel2} give the inverse maps of each other. 
\end{proof}
\begin{prop}\label{fgammainj1} The $\Lambda_{n}$-module homomorphism
\begin{equation}\label{fgammainj}
1-F\gamma_{n}^{-1}: T_{p}{\rm Br}(\overline{X})^{\ast}\otimes_{\mathbb{Z}_{p}}\Lambda_{n}\to T_{p}{\rm Br}(\overline{X})^{\ast}\otimes_{\mathbb{Z}_{p}}\Lambda_{n}
\end{equation}
is injective.
\end{prop}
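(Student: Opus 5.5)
Set $M:=T_{p}{\rm Br}(\overline{X})^{\ast}$, a free $\mathbb{Z}_{p}$-module of rank $r:=22-\rho(\overline{X})$. Since $\Lambda_{n}=\mathbb{Z}_{p}[\Gamma_{n}]$ is free over $\mathbb{Z}_{p}$ with basis $1,\gamma_{n},\dots,\gamma_{n}^{p^{n}-1}$, the module $M\otimes_{\mathbb{Z}_{p}}\Lambda_{n}$ is a free $\mathbb{Z}_{p}$-module of finite rank $rp^{n}$, and $1-F\gamma_{n}^{-1}$ is a $\mathbb{Z}_{p}$-linear endomorphism of it. An endomorphism of a finite free $\mathbb{Z}_{p}$-module is injective as soon as its determinant is nonzero, because then it becomes an isomorphism after $\otimes\mathbb{Q}_{p}$ and the module is torsion-free. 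So the plan is to compute ${\rm det}_{\mathbb{Z}_{p}}(1-F\gamma_{n}^{-1})$ and show that it does not vanish.

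To compute the determinant, I will extend scalars to a finite extension $\mathcal{O}$ of $\mathbb{Z}_{p}$ containing the $p^{n}$-th roots of unity. Then $\Lambda_{n}\otimes\mathbb{Q}_{p}(\mu_{p^{n}})\cong\prod_{\zeta\in W_{n}}\mathbb{Q}_{p}(\mu_{p^{n}})$ via $\gamma_{n}\mapsto\zeta$. On the $\zeta$-component, $M\otimes\Lambda_{n}$ becomes $M\otimes\mathbb{Q}_{p}(\mu_{p^{n}})$ and $1-F\gamma_{n}^{-1}$ acts as $1-\zeta^{-1}F$. Hence
\[
{\rm det}(1-F\gamma_{n}^{-1})=\prod_{\zeta\in W_{n}}{\rm det}(1-\zeta^{-1}F : M)=\prod_{\zeta\in W_{n}}{\rm det}(1-\zeta F: M).
\]
Moreover ${\rm det}(1-\zeta F:M)={\rm det}(1-\zeta F^{-1}: T_{p}{\rm Br}(\overline{X}))=L_{{\rm tr}}(X,\zeta)$, since the dual action of $F$ on $M$ is transpose to the action of $F^{-1}$, as already used in the proof of Proposition \ref{formula}. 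Using the product identity from the proof of Theorem \ref{iwasawatype1}, this product equals $\prod_{i:\beta_{i}\notin W'}(1-\beta_{i}^{p^{n}})=L_{{\rm tr}}(X_{n},1)$.

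It remains to show that $L_{{\rm tr}}(X_{n},1)\neq0$, which I expect to be the only substantive point. Each factor $1-\beta_{i}^{p^{n}}$ with $\beta_{i}\notin W'$ is nonzero, because $\beta_{i}^{p^{n}}=1$ would make $\beta_{i}$ a root of unity; this is the Tate-conjecture input recorded in the Remark after Definition \ref{lfunction} and in Lemma \ref{ngeo}. Alternatively, Corollary \ref{geoeigen} gives $p^{v_{p}(L_{{\rm tr}}(X_{n},1))}=\#{\rm Br}(\overline{X})^{G^{n}}[p^{\infty}]$, which is finite by Theorem \ref{ls}, so the valuation is finite and $L_{{\rm tr}}(X_{n},1)\neq0$.

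There is an even simpler alternative that avoids the determinant bookkeeping. By Proposition \ref{fgamma} and Proposition \ref{dualtate}, the cokernel of the map is isomorphic to ${\rm Br}(\overline{X})^{G^{n}}[p^{\infty}]^{\lor}$, which is finite. A $\mathbb{Z}_{p}$-endomorphism of a finite free module with finite cokernel has full rank, hence is injective. I would present this version as the main argument and keep the determinant computation as a remark.
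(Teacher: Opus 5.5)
Your main argument is exactly the paper's proof: identify the cokernel of $1-F\gamma_{n}^{-1}$ with ${\rm Br}(\overline{X})^{G^{n}}[p^{\infty}]^{\lor}$ via Propositions \ref{fgamma} and \ref{dualtate}, note that it is finite by Theorem \ref{ls}, and conclude that an endomorphism of a finite free $\mathbb{Z}_{p}$-module with finite cokernel is injective. Your determinant computation, giving $\det_{\mathbb{Z}_{p}}(1-F\gamma_{n}^{-1})=L_{{\rm tr}}(X_{n},1)\neq 0$, is also correct and makes a consistent supplementary remark, but it is not needed.
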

\begin{proof} By Proposition \ref{dualtate}, Theorem \ref{ls}, and Proposition \ref{fgamma}, the module $(T_{p}{\rm Br}(\overline{X})^{\ast}\otimes_{\mathbb{Z}_{p}}\Lambda_{n})/(1-F\gamma^{-1}_{n})(T_{p}{\rm Br}(\overline{X})^{\ast}\otimes_{\mathbb{Z}_{p}}\Lambda_{n})$ is finite. In particular, the extension of scalars to $\mathbb{Q}_{p}$
\begin{equation}
(1-F\gamma_{n}^{-1})\otimes{\rm id}: (T_{p}{\rm Br}(\overline{X})^{\ast}\otimes_{\mathbb{Z}_{p}}\Lambda_{n})\otimes_{\mathbb{Z}_{p}}\mathbb{Q}_{p}\to(T_{p}{\rm Br}(\overline{X})^{\ast}\otimes_{\mathbb{Z}_{p}}\Lambda_{n})\otimes_{\mathbb{Z}_{p}}\mathbb{Q}_{p}
\end{equation}
is an isomorphism, i.e., an injection. Since $T_{p}{\rm Br}(\overline{X})^{\ast}\otimes_{\mathbb{Z}_{p}}\Lambda_{n}$ is a free $\mathbb{Z}_{p}$-module, we  see that the morphism \eqref{fgammainj} is injective. 
\end{proof}
\begin{prop}\label{fgamma2} There is a canonical isomorphism of $\Lambda$-modules:
\begin{equation}
\varprojlim_{n}\frac{(T_{p}{\rm Br}(\overline{X})^{\ast}\otimes_{\mathbb{Z}_{p}}\Lambda_{n})}{(1-F\gamma^{-1}_{n})(T_{p}{\rm Br}(\overline{X})^{\ast}\otimes_{\mathbb{Z}_{p}}\Lambda_{n})}\cong \frac{(T_{p}{\rm Br}(\overline{X})^{\ast}\otimes_{\mathbb{Z}_{p}}\Lambda)}{(1-F\gamma^{-1}) (T_{p}{\rm Br}(\overline{X})^{\ast}\otimes_{\mathbb{Z}_{p}}\Lambda)}.
\end{equation}
\end{prop}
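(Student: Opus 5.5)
Write $T:=T_{p}{\rm Br}(\overline{X})^{\ast}$, a free $\mathbb{Z}_{p}$-module of rank $r=22-\rho(\overline{X})$. The plan is to realize both sides as cokernels of the same kind of map and then pass to the limit using exactness of inverse limits of compact modules.

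First, note that $T\otimes_{\mathbb{Z}_{p}}\Lambda\cong\Lambda^{r}$ is a compact $\Lambda$-module. Since $T$ is finite free, $T\otimes_{\mathbb{Z}_{p}}\Lambda=\varprojlim_{n}(T\otimes_{\mathbb{Z}_{p}}\Lambda_{n})$, using $\Lambda=\varprojlim_{n}\Lambda_{n}$ and the fact that a finite direct sum commutes with inverse limits. Under the projections $\Lambda\to\Lambda_{n}$, the element $\gamma$ maps to $\gamma_{n}$. Hence the endomorphism $1-F\gamma^{-1}$ of $T\otimes\Lambda$ is the inverse limit of the endomorphisms $1-F\gamma_{n}^{-1}$ of $T\otimes\Lambda_{n}$. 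Here $F$ acts on the first factor and $\gamma^{-1}$ on the second, both $\Lambda$-linearly. Concretely, after choosing a $\mathbb{Z}_{p}$-basis of $T$, it is given by the matrix $I-\gamma^{-1}A$, where $A\in{\rm GL}_{r}(\mathbb{Z}_{p})$ is the matrix of $F$. This matrix reduces modulo $(\gamma^{p^{n}}-1)$ to $I-\gamma_{n}^{-1}A$.

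Next, for each $n$ we have the exact sequences
\[0\to T\otimes\Lambda_{n}\xrightarrow{1-F\gamma_{n}^{-1}}T\otimes\Lambda_{n}\to Q_{n}\to 0,\]
which are compatible in $n$. Injectivity on the left is Proposition \ref{fgammainj1}, and $Q_{n}$ denotes the quotient module \eqref{lambdan}. All terms are finitely generated $\mathbb{Z}_{p}$-modules, hence compact, and the transition maps are continuous. Therefore $\varprojlim$ is exact on this system; equivalently, $\varprojlim^{1}$ vanishes for systems of compact Hausdorff groups. Taking inverse limits gives an exact sequence
\[0\to T\otimes\Lambda\xrightarrow{1-F\gamma^{-1}}T\otimes\Lambda\to\varprojlim_{n}Q_{n}\to 0.\]
This identifies $\varprojlim_{n}Q_{n}$ with the cokernel of $1-F\gamma^{-1}$ on $T\otimes\Lambda$, which is the right-hand side of the proposition. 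The identification is $\Lambda$-linear because every map in the system is $\Lambda$-linear. As a by-product, $1-F\gamma^{-1}$ is injective on $T\otimes\Lambda$.

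The main obstacle is the justification of exactness of $\varprojlim$ on the right, i.e.\ the surjectivity of $T\otimes\Lambda\to\varprojlim_{n}Q_{n}$. I would handle it as follows. Each kernel system $(T\otimes\Lambda_{n})_{n}$ consists of compact groups, so $\varprojlim^{1}=0$; alternatively, the transition maps are surjective, so the system is Mittag-Leffler. The long exact sequence for $\varprojlim$ then gives surjectivity.

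One can also argue directly. Given a compatible family $(x_{n})$ with $x_{n}\in Q_{n}$, the sets of lifts of $x_{n}$ in $T\otimes\Lambda_{n}$ are nonempty closed cosets. They form an inverse system of nonempty compact sets, whose limit is nonempty. Any element of that limit is a preimage of $(x_{n})$ in $T\otimes\Lambda$.

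One must also check that $(1-F\gamma^{-1})(T\otimes\Lambda)$ is closed. This holds because it is the continuous image of a compact set, so the quotient on the right-hand side is Hausdorff and compatible with the limit topology.
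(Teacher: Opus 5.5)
Your proof is correct and is essentially the paper's argument. Both rest on the same three ingredients: the Mittag-Leffler property coming from surjective transition maps, the injectivity of $1-F\gamma_{n}^{-1}$ from Proposition \ref{fgammainj1}, and the identification $T_{p}{\rm Br}(\overline{X})^{\ast}\otimes_{\mathbb{Z}_{p}}\Lambda\cong\varprojlim_{n}T_{p}{\rm Br}(\overline{X})^{\ast}\otimes_{\mathbb{Z}_{p}}\Lambda_{n}$. The only difference is packaging: the paper applies Mittag-Leffler to the system of images and then checks by hand that the limit of the images equals the image of $1-F\gamma^{-1}$, whereas you phrase the same facts as a short exact sequence of inverse systems and read the result off from the vanishing of $\varprojlim^{1}$.
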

\begin{proof}
Clearly, for $n\geq m$, the restriction $(1-F\gamma^{-1}_{n})(T_{p}{\rm Br}(\overline{X})\otimes_{\mathbb{Z}_{p}}\Lambda_{n})\to (1-F\gamma^{-1}_{m})(T_{p}{\rm Br}(\overline{X})\otimes_{\mathbb{Z}_{p}}\Lambda_{m})$ is surjective. Hence the inverse system $((1-F\gamma^{-1}_{n})(T_{p}{\rm Br}(\overline{X})\otimes_{\mathbb{Z}_{p}}\Lambda_{n})_{n\geq 0}$ is  Mittag-Leffler (\cite[Section 10.86, Example 10.86.2]{stk}). In particular, we have a canonical isomorphism of $\Lambda$-modules:
\begin{equation}
\varprojlim_{n}\frac{(T_{p}{\rm Br}(\overline{X})^{\ast}\otimes_{\mathbb{Z}_{p}}\Lambda_{n})}{(1-F\gamma^{-1}_{n})(T_{p}{\rm Br}(\overline{X})^{\ast}\otimes_{\mathbb{Z}_{p}}\Lambda_{n})}\cong \frac{\varprojlim_{n}(T_{p}{\rm Br}(\overline{X})^{\ast}\otimes_{\mathbb{Z}_{p}}\Lambda_{n})}{\varprojlim_{n}(1-F\gamma^{-1}_{n})(T_{p}{\rm Br}(\overline{X})^{\ast}\otimes_{\mathbb{Z}_{p}}\Lambda_{n})}.
\end{equation}
The restriction $(1-F\gamma^{-1})(T_{p}{\rm Br}(\overline{X})^{\ast}\otimes_{\mathbb{Z}_{p}}\Lambda)\to (1-F\gamma_{n}^{-1})(T_{p}{\rm Br}(\overline{X})^{\ast}\otimes_{\mathbb{Z}_{p}}\Lambda_{n})$ induces a canonical morphism of $\Lambda$-modules
\begin{equation}\label{fgammafun}
(1-F\gamma^{-1})(T_{p}{\rm Br}(\overline{X})^{\ast}\otimes_{\mathbb{Z}_{p}}\Lambda)\to \varprojlim_{n}(1-F\gamma_{n}^{-1})(T_{p}{\rm Br}(\overline{X})^{\ast}\otimes_{\mathbb{Z}_{p}}\Lambda_{n}),
\end{equation}
and we have the following commutative diagram:
\[
\begin{tikzcd}
T_{p}{\rm Br}(\overline{X})^{\ast}\otimes_{\mathbb{Z}_{p}}\Lambda\ar[r, "\sim"] & \varprojlim_{n}(T_{p}{\rm Br}(\overline{X})^{\ast}\otimes_{\mathbb{Z}_{p}}\Lambda_{n})\\
    (1-F\gamma^{-1})(T_{p}{\rm Br}(\overline{X})^{\ast}\otimes_{\mathbb{Z}_{p}}\Lambda)\ar{r}\ar[u, hook] & \varprojlim_{n}(1-F\gamma_{n}^{-1})(T_{p}{\rm Br}(\overline{X})^{\ast}\otimes_{\mathbb{Z}_{p}}\Lambda_{n}), \ar[u, hook]
\end{tikzcd}
\]
where the top horizontal morphism is an isomorphism because $T_{p}{\rm Br}(\overline{X})$ is a free $\mathbb{Z}_{p}$-module of finite rank.
In particular, the morphism \eqref{fgammafun} is injective. To prove the surjectivity, let $((1-F\gamma_{n}^{-1})(\alpha_{n}))_{n\geq 0}\in \varprojlim_{n}(1-F\gamma_{n}^{-1})(T_{p}{\rm Br}(\overline{X})^{\ast}\otimes_{\mathbb{Z}_{p}}\Lambda_{n})$, where $\alpha_{n}\in T_{p}{\rm Br}(\overline{X})^{\ast}\otimes_{\mathbb{Z}_{p}}\Lambda_{n}$. For $n\geq m$, we have $(1-F\gamma^{-1}_{m})(\alpha_{n}|_{T_{p}{\rm Br}(\overline{X})^{\ast}\otimes_{\mathbb{Z}_{p}}\Lambda_{m}})=(1-F\gamma^{-1}_{m})(\alpha_{m})$.  Hence, by Proposition \ref{fgammainj1}, we have $\alpha_{n}|_{T_{p}{\rm Br}(\overline{X})^{\ast}\otimes_{\mathbb{Z}_{p}}\Lambda_{m}}=\alpha_{m}$. Thus there exists a unique $\alpha\in T_{p}{\rm Br}(\overline{X})^{\ast}\otimes_{\mathbb{Z}_{p}}\Lambda$ such that $\alpha|_{T_{p}{\rm Br}(\overline{X})\otimes_{\mathbb{Z}_{p}}\Lambda_{n}}=\alpha_{n}$ for every $n\geq 0$. Then 
\begin{equation}
    (1-F\gamma^{-1})(\alpha)|_{(1-F\gamma_{n}^{-1})(T_{p}{\rm Br}(\overline{X})\otimes_{\mathbb{Z}_{p}}\Lambda_{n})}=(1-F\gamma^{-1}_{n})(\alpha|_{(1-F\gamma_{n}^{-1})(T_{p}{\rm Br}(\overline{X})\otimes_{\mathbb{Z}_{p}}\Lambda_{n})})=(1-F\gamma_{n}^{-1})\alpha_{n},
\end{equation}
that is, the morphism \eqref{fgammafun} is an isomorphism. Hence we obtain
\begin{equation}
    \frac{\varprojlim_{n}(T_{p}{\rm Br}(\overline{X})^{\ast}\otimes_{\mathbb{Z}_{p}}\Lambda_{n})}{\varprojlim_{n}(1-F\gamma^{-1}_{n})(T_{p}{\rm Br}(\overline{X})^{\ast}\otimes_{\mathbb{Z}_{p}}\Lambda_{n})}\cong \frac{T_{p}{\rm Br}(\overline{X})^{\ast}\otimes_{\mathbb{Z}_{p}}\Lambda}{(1-F\gamma^{-1})(T_{p}{\rm Br}(\overline{X})^{\ast}\otimes_{\mathbb{Z}_{p}}\Lambda)}.
\end{equation}
\end{proof}
\begin{prop}\label{fitarith} There exists a canonical $\Lambda$-isomorphism:
\begin{equation}
{\rm Br}(\overline{X})^{G^{\infty}}[p^{\infty}]^{\lor}\cong (T_{p}{\rm Br}(\overline{X})^{\ast}\otimes_{\mathbb{Z}_{p}}\Lambda)/(1-F\gamma^{-1}) (T_{p}{\rm Br}(\overline{X})^{\ast}\otimes_{\mathbb{Z}_{p}}\Lambda). 
\end{equation}
\end{prop}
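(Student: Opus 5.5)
The plan is to assemble the isomorphism level by level and then pass to the inverse limit, using the results just established.

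\textbf{Step 1 (finite level).} Fix $n\geq0$ and compose the canonical $\Gamma_n$-isomorphism of Proposition \ref{dualtate} with the $\Lambda_n$-isomorphism of Proposition \ref{fgamma}. This gives
\begin{equation}
\Phi_n:\ {\rm Br}(\overline{X})^{G^{n}}[p^{\infty}]^{\lor}\xrightarrow{\sim} (T_{p}{\rm Br}(\overline{X})^{\ast}\otimes_{\mathbb{Z}_{p}}\Lambda_{n})/(1-F\gamma^{-1}_{n})(T_{p}{\rm Br}(\overline{X})^{\ast}\otimes_{\mathbb{Z}_{p}}\Lambda_{n}).
\end{equation}
Before using $\Phi_n$ as a $\Lambda_n$-isomorphism, I will check that the $\Gamma_n$-action on the left matches multiplication by $\gamma_n$ on the right. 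By the relation \eqref{rel}, $\gamma_n$ acts on the right-hand side through $F$. On the left, $\gamma_n$ acts through a lift of $\gamma_n$ to $G$, which can be taken to be $F$. Sign or inverse conventions in the Pontryagin dual (the $F^{-p^n}$ versus $F^{p^n}$ appearing in the proof of Proposition \ref{dualtate}) need to be tracked. They should match because $T_p{\rm Br}(\overline{X})^\ast$ carries the contragredient action.

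\textbf{Step 2 (compatibility in $n$).} For $n\geq m$, the transition map on the left is the dual of the inclusion ${\rm Br}(\overline{X})^{G^{m}}[p^{\infty}]\subset{\rm Br}(\overline{X})^{G^{n}}[p^{\infty}]$. Under Proposition \ref{dualtate}, this corresponds to the natural projection
\begin{equation}
T_p{\rm Br}(\overline{X})^{\ast}/(1-F^{p^n})\to T_p{\rm Br}(\overline{X})^{\ast}/(1-F^{p^m}),
\end{equation}
which is well defined since $1-F^{p^m}$ divides $1-F^{p^n}$. In turn, this corresponds to the map induced by $\Lambda_n\to\Lambda_m$ on the right, because both Step 1 maps send $\bar\alpha\mapsto\alpha\otimes1$. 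So the $\Phi_n$ form an isomorphism of inverse systems of $\Lambda$-modules. This compatibility of the identifications is the step I expect to be the main obstacle.

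\textbf{Step 3 (passage to the limit).} Take $\varprojlim_n$. On the left, Pontryagin duality converts the direct limit defining ${\rm Br}(\overline{X})^{G^{\infty}}[p^{\infty}]=\varinjlim_n{\rm Br}(\overline{X})^{G^{n}}[p^{\infty}]$ into
\begin{equation}
{\rm Br}(\overline{X})^{G^{\infty}}[p^{\infty}]^{\lor}\cong\varprojlim_n{\rm Br}(\overline{X})^{G^{n}}[p^{\infty}]^{\lor},
\end{equation}
and this is $\Lambda$-linear. Lemma \ref{geoquo} is consistent with this. On the right, Proposition \ref{fgamma2} identifies the limit with $(T_{p}{\rm Br}(\overline{X})^{\ast}\otimes\Lambda)/(1-F\gamma^{-1})$. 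Composing these gives the claimed canonical $\Lambda$-isomorphism.
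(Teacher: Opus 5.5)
Your proposal is correct and follows essentially the same route as the paper. You write ${\rm Br}(\overline{X})^{G^{\infty}}[p^{\infty}]^{\lor}$ as $\varprojlim_n {\rm Br}(\overline{X})^{G^{n}}[p^{\infty}]^{\lor}$, identify each level via Propositions \ref{dualtate} and \ref{fgamma}, and pass to the limit with Proposition \ref{fgamma2}. Your checks that $\gamma_n$ acts through the contragredient $F$ on both sides, and that the maps $\bar\alpha\mapsto\alpha\otimes1$ are compatible with the transition maps, are correct; they make explicit what the paper's chain of isomorphisms leaves implicit.
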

\begin{proof} 
By Proposition \ref{dualtate}, we have 
\begin{equation}
    {\rm Br}(\overline{X})^{G^{\infty}}[p^{\infty}]^{\lor}\cong\varprojlim_{n}{\rm Br}(\overline{X})^{G^{n}}[p^{\infty}]^{\lor}\cong\varprojlim_{n}\frac{T_{p}{\rm Br}(\overline{X})^{\ast}}{(1-F^{p^{n}})T_{p}{\rm Br}(\overline{X})^{\ast}}.
\end{equation}
On the other hand, by Proposition \ref{fgamma} and \ref{fgamma2}, we have
\begin{equation}
    \varprojlim_{n}\frac{T_{p}{\rm Br}(\overline{X})^{\ast}}{(1-F^{p^{n}})T_{p}{\rm Br}(\overline{X})^{\ast}}\cong \varprojlim_{n}\frac{(T_{p}{\rm Br}(\overline{X})^{\ast}\otimes_{\mathbb{Z}_{p}}\Lambda_{n})}{(1-F\gamma^{-1}_{n})(T_{p}{\rm Br}(\overline{X})^{\ast}\otimes_{\mathbb{Z}_{p}}\Lambda_{n})}\cong\frac{T_{p}{\rm Br}(\overline{X})^{\ast}\otimes_{\mathbb{Z}_{p}}\Lambda}{(1-F\gamma^{-1})(T_{p}{\rm Br}(\overline{X})^{\ast}\otimes_{\mathbb{Z}_{p}}\Lambda)}.
\end{equation}
\end{proof}
\begin{thm}[Iwasawa main conjecture for \text{${\rm Br}(X_{\infty})[p^{\infty}]^{\lor}$}]\label{imck3}  Let $X/\mathbb{F}_{q}$ be a K3 surface, and let $X_{\infty}:=X\times_{\mathbb{F}_{q}}{\rm Spec}(\mathbb{F}_{q^{p^{\infty}}})$. Fix the isomorphism $\Lambda\cong \mathbb{Z}_{p}[[T]]$ induced by the arithmetic Frobenius $F$. Then we have an equality of ideals in $\Lambda$
\begin{equation}
{\rm Char}_{\Lambda}({\rm Br}(X_{\infty})[p^{\infty}]^{\lor})=(L_{{\rm tr}}(X, 1+T)).
\end{equation} 
\end{thm}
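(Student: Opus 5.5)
The plan is to reduce the statement to a determinant computation for the geometric module ${\rm Br}(\overline{X})^{G^{\infty}}[p^{\infty}]^{\lor}$. By Theorem \ref{geoarith}, the map ${\rm Br}(X_{\infty})[p^{\infty}]\to {\rm Br}(\overline{X})^{G^{\infty}}[p^{\infty}]$ has finite kernel and cokernel, so its Pontryagin dual is a pseudo-isomorphism between finitely generated torsion $\Lambda$-modules (Corollary \ref{tormod}, Proposition \ref{geotorsion}). Characteristic ideals only see the lengths $l(\mathfrak{p},M)$ at height-one primes, and finite modules vanish after localization at such primes. Hence
\[{\rm Char}_{\Lambda}({\rm Br}(X_{\infty})[p^{\infty}]^{\lor})={\rm Char}_{\Lambda}({\rm Br}(\overline{X})^{G^{\infty}}[p^{\infty}]^{\lor}),\]
and it suffices to compute the right-hand side.

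By Proposition \ref{fitarith}, the geometric module is the cokernel of the $\Lambda$-linear endomorphism $\Phi:=1-F\gamma^{-1}$ of the free $\Lambda$-module $M:=T_{p}{\rm Br}(\overline{X})^{\ast}\otimes_{\mathbb{Z}_{p}}\Lambda$, which has rank $r=22-\rho(\overline{X})$. I would first check that $\Phi$ is injective, which follows from the cokernel being torsion. For an injective endomorphism of a finite free module over the regular local ring $\Lambda$, the standard fact (Fitting ideal equals characteristic ideal for a module of projective dimension $\le 1$ with a square presentation, e.g.\ as in \cite[Proposition 4.3]{tu}, applied now over $\Lambda$ rather than $\mathbb{Z}_{p}$) gives ${\rm Char}_{\Lambda}({\rm coker}\,\Phi)=(\det\Phi)$. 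If one prefers to avoid citing this over $\Lambda$, localize at each height-one prime $\mathfrak{p}$: over the DVR $\Lambda_{\mathfrak{p}}$, the length of the cokernel of an injective square matrix equals ${\rm ord}_{\mathfrak{p}}$ of its determinant by Smith normal form. Since $\Lambda$ is a UFD, these local identities determine the principal ideal $(\det\Phi)$.

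The remaining step is to compute $\det\Phi$ in $\mathbb{Z}_{p}[[T]]$ with $\gamma=1+T$:
\[\det(1-F(1+T)^{-1}\,|\,T_{p}{\rm Br}(\overline{X})^{\ast}).\]
The eigenvalues of $F$ on the dual $T_{p}{\rm Br}(\overline{X})^{\ast}$ coincide with those of $F^{-1}$ on $T_{p}{\rm Br}(\overline{X})$, namely the $\beta_{i}\notin W'$ (the dual action $gf=f\circ g^{-1}$ inverts them). So over $\overline{\mathbb{Q}}_{p}$,
\[\det\Phi=\prod_{i:\beta_{i}\notin W'}(1-\beta_{i}(1+T)^{-1}).\]
Comparing with $L_{{\rm tr}}(X,1+T)=\prod_{i}(1-\beta_{i}(1+T))$, each factor differs by $(1+T)^{-1}$ times $((1+T)-\beta_{i})$ versus $(1-\beta_{i}(1+T))$. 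This is the step I expect to be the main obstacle: the convention on whether $F$ or $F^{-1}$ appears must be matched carefully. Either the dualization convention in Proposition \ref{fitarith} already yields $\prod(1-\beta_{i}\gamma)$ up to the unit $(1+T)^{\pm r}$, or one uses the functional equation and Poincaré duality $\beta_{i}\mapsto\beta_{i}^{-1}$, since the transcendental eigenvalues are closed under inversion by the self-duality of $T_{p}{\rm Br}(\overline{X})\otimes\mathbb{Q}_{p}$ under the cup product. In that case
\[\prod(1-\beta_{i}^{-1}\gamma)=\prod(-\beta_{i}^{-1})\cdot\prod(1-\beta_{i}\gamma^{-1})\cdot\gamma^{r},\]
and $\prod\beta_{i}^{-1}$ is a $p$-adic unit because $p\nmid q$. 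Thus the two generators differ by the unit $\pm\prod\beta_{i}^{\pm1}\cdot(1+T)^{\pm r}$ of $\Lambda$, which gives equality of ideals. As a consistency check, specializing at $T=\zeta-1$ and taking products over $W_{n}$ should recover Corollary \ref{geoeigen}, in agreement with Theorem \ref{iwasawatype1}.
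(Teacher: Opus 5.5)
Your proposal is correct and follows the same route as the paper's proof: the pseudo-isomorphism from Theorem \ref{geoarith}, the square presentation from Proposition \ref{fitarith}, the identity ${\rm Char}_{\Lambda}({\rm coker}\,\Phi)=(\det\Phi)$, and then a determinant computation. Under the paper's contragredient convention the determinant is indeed $\prod(1-\beta_i\gamma^{-1})$, so your appeal to the self-duality of the transcendental part (closure of the $\beta_i$ under inversion) is genuinely needed; it supplies the justification that the paper leaves implicit when it passes from $\det(1-F(1+T)^{-1}\mid T_p{\rm Br}(\overline{X})^{\ast})$ to $\det(1-F^{-1}(1+T)\mid T_p{\rm Br}(\overline{X}))$.
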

\begin{proof} By Proposition \ref{fitarith} and \cite[Proposition 4.3]{tu}, we have
\begin{equation}
    {\rm Char}_{\Lambda}({\rm Br}(\overline{X})^{G^{\infty}}[p^{\infty}]^{\lor})=({\rm det}(1-F\gamma^{-1}: T_{p}{\rm Br}(\overline{X})^{\ast}\otimes_{\mathbb{Z}_{p}}\Lambda)).
\end{equation}
Under the isomorphism $\Lambda\cong \mathbb{Z}_{p}[[T]]$ via $\gamma\mapsto 1+T$, we have
\begin{align}
    ({\rm det}(1-F\gamma^{-1}: T_{p}{\rm Br}(\overline{X})^{\ast}\otimes_{\mathbb{Z}_{p}}\Lambda))&=({\rm det}(1-F(1+T)^{-1}: T_{p}{\rm Br}(\overline{X})^{\ast}\otimes_{\mathbb{Z}_{p}}\mathbb{Z}_{p}[[T]]))
    \\&=({\rm det}(1-F^{-1}(1+T):T_{p}{\rm Br}(\overline{X})\otimes_{\mathbb{Z}_{p}}\mathbb{Z}_{p}[[T]]))
    \\&=(L_{{\rm tr}}(X, 1+T)).
\end{align}
By Theorem \ref{geoarith},  the finitely generated torsion $\Lambda$-module ${\rm Br}(X_{\infty})[p^{\infty}]^{\lor}$ is pseudo-isomorphic to the $\Lambda$-module ${\rm Br}(\overline{X})^{G^{\infty}}[p^{\infty}]^{\lor}$. Hence we obtain
\begin{equation}
    {\rm Char}_{\Lambda}({\rm Br}(X_{\infty})[p^{\infty}]^{\lor})={\rm Char}_{\Lambda}({\rm Br}(\overline{X})^{G^{\infty}}[p^{\infty}]^{\lor})=(L_{{\rm tr}}(X, 1+T)).
\end{equation}
\end{proof}
The Artin--Tate conjecture is expected to hold for all smooth geometrically connected projective surfaces over finite fields. A natural question is whether the Iwasawa theory developed in this paper can be generalized to this broader class of surfaces. We believe that this direction is important for the further development of the subject.


\begin{thebibliography}{45}
\bibitem{cassels} J. W. S. Cassels and A. Fr\"ohlich., ed, \emph{Algebraic Number Theory}, second edition, London Mathematical Society, 2010. 
\bibitem{cs} J. -L. Colliot-Th\`el\'ene and A. N. Skorobogatov, \emph{The Brauer-Grothendieck Group}, Ergebnisse der Mathematik und ihrer Grenzgebiete, 3. Folge, Band 71, Springer, 2021.
\bibitem{fer} B. Ferrero and L. C. Washington, \emph{The Iwasawa Invariant $\mu_{p}$ Vanishes for Abelian Number Fields},
Ann. Math. 109, No. 2, 377--395, 1979. 
\bibitem{gre} R. Greenberg, \emph{Iwasawa Theory for elliptic curves}, Lecture Notes in Math. 1716, Springer, 1999.
\bibitem{sga4}  A. Grothendieck, et al., SGA 4 (with M. Artin and J. L. Verdier) \emph{Th\'eorie des Topos et Cohomologie Etale des Sch\'emas}, Lecture Notes in Math. 270, Springer-Verlag, Heidelberg 1972.  
\bibitem{dix}  A. Grothendieck, Le groupe de Brauer. II. Th\'eorie cohomologique. in \emph{Dix Expos\'es sur la Cohomologie des Sch\'emas}, 88--188, North-Holland, Amsterdam, 1968.
\bibitem{har} R. Hartshorne, \emph{Algebraic Geometry}, Graduate Texts in Mathematics, vol. 52, Springer-Verlag, New York, 1977.
\bibitem{Huy} D. Huybrechts, \emph{Lectures on K3 surfaces}, volume 158 of Cambridge Studies in Advanced Mathematics. Cambridge University Press, Cambridge, 2016.
\bibitem{iwasawa} K. Iwasawa, \emph{On $\Gamma$-extensions of algebraic number fields}, Bull. Amer. Math. Soc. 65, 183--226, 1959.
\bibitem{iwasawa2} K. Iwasawa,  \emph{On the $\mu$-invariants of $\mathbb{Z}_{l}$-extensions}, Number theory, algebraic geometry and commutative algebra, in honor of Yasuo Akizuki, 1--11, 1973.
\bibitem{kato} K. Kato, \emph{$p$-adic Hodge theory and values of zeta functions of modular forms}, Ast\'erisque, 295, 117--290, 2004.
\bibitem{kmp} W. Kim and K. Madapusi Pera, \emph{2-adic integral canonical models}, Forum Math. Sigma 4 (2016), e28, 34.
\bibitem{suzuki} King-Fai Lai, Ignazio Longhi, Takashi Suzuki, Ki-Seng Tan and Fabien Trihan, On the $\mu$-invariants of abelian varieties
over function fields of positive characteristic, Algebra \& Number Theory, 15, no.14, 2021.  
\bibitem{lmfdb} The LMFDB Collaboration (2026). The L-functions and modular forms database.
\bibitem{mp15} K. Madapusi Pera, \emph{The Tate conjecture for K3 surfaces in odd characteristic}, Invent. Math. 205 (2015), no.2, 625--668.
\bibitem{matsumura} H. Matsumura, \emph{Commutative Algebra}, second edition, The Benjamin/Cummings Publishing Company, Inc., 1980.
\bibitem{mazur}B. Mazur, \emph{Rational Points of Abelian Varieties
with Values in Towers of Number Fields}, Invent. math. 18, 183-266 (1972). 
\bibitem{ms} B. Mazur and P. Swinnerton-Dyer, \emph{Arithmetic of Weil Curves}, lnvent. math. 25, 1--61, 1974. 
\bibitem{mw} B. Mazur and A. Wiles, \emph{Class fields of Abelian extensions of $\mathbb{Q}$}, Invent. math. 76, 179--330, 1984. 
\bibitem{milne0} J. S. Milne, Abelian Varieties in \emph{Arithmetic Geometry} ed by G. Cornell and J. H. Silverman, Springer-Verlag New York Inc. 1986, 103--150.
\bibitem{milne} J. S. Milne, \emph{\'Etale Cohomology}, Princeton Mathematical Series, 33, Princeton University Press, 1980.
\bibitem{nsw} J. Neukirch, A. Schmidt, and K. Wingberg, \emph{Cohomology of Number Fields},  Grundlehren der mathematischen Wissenschaften 323, Springer, 2000.
\bibitem{o1} T. Ochiai, \emph{Iwasawa Theory and its perspective I}, Iwanami Studies in Advanced Mathematics, Iwanami Shoten, 2014.
\bibitem{pont} L. S. Pontryagin, \emph{Topological Groups}, Gordon and Breach, New York, London, Paris 1966.
\bibitem{rubin} K. Rubin, \emph{The ``main conjectures'' of Iwasawa theory for
imaginary quadratic fields}, Invent. math. 103, 25--68 (1991).
\bibitem{serre} J. P. Serre, \emph{Corps Locaux}, Hermann Paris, 1962.
\bibitem{sil} J. H. Silverman, \emph{The Arithmetic of Elliptic Curves}, second ed., Graduate Texts in Mathematics, vol. 106, Springer-Verlag, New York, 2009. 
\bibitem{su} C. Skinner and E. Urban, \emph{The Iwasawa Main Conjecture for $GL_{2}$}, Invent. Math. 195, 1, 1--277, 2014. 
\bibitem{stk} The Stacks Project Authors, \emph{Stacks Project}, https://stacks.math.columbia.edu, 2018.
\bibitem{tate} J. Tate, \emph{On the conjectures of Birch and Swinnerton-Dyer and a geometric analog}, S\'eminaire Bourbaki, Expos\'e 306, 1964/1966. 415--440, Soci\'et\'e Math\'ematique de France, 1995. 
\bibitem{tu} S. Tateno and J. Ueki, \emph{The Iwasawa invariants of $\mathbb{Z}^{d}_{p}$-covers of links}, Journal of the London Mathematical Society, volume 111, issue 6, 2025. 
\bibitem{Washington} L. C. Washington, \emph{Introduction to Cyclotomic Fields}, second ed., Graduate Texts in Mathematics, vol. 83, Springer-Verlag, New York, 1997.
\bibitem{wiles1} A. Wiles, \emph{The Iwasawa conjecture for totally real fields}, Ann. Math. (2) 131, No. 3, 493--540, 1990.
\end{thebibliography}
\end{document}